\title{On some properties of birational derived splinters} % and rational singularities}
\newcommand{\fa}{\mathfrak{a}}
\newcommand{\fm}{\mathfrak{m}}
\newcommand{\fn}{\mathfrak{n}}
\newcommand{\fM}{\mathfrak{M}}
\newcommand{\fp}{\mathfrak{p}}
\newcommand{\fq}{\mathfrak{q}}
\newcommand{\ft}{\mathfrak{t}}
\newcommand{\Spec}{\operatorname{Spec}}
\newcommand{\red}{\operatorname{red}}
\newcommand{\Hom}{\operatorname{Hom}}
\newcommand{\colim}{\operatorname{colim}}
\newcommand{\cO}{\mathcal{O}}
\newcommand{\cU}{\mathcal{U}}
\newcommand{\bF}{\mathbf{F}}
\newcommand{\bQ}{\mathbf{Q}}
\newcommand{\bZ}{\mathbf{Z}}
\newcommand{\Tag}[1]{\href{https://stacks.math.columbia.edu/tag/#1}{\texttt{#1}}}
\newcommand{\citestacks}[1]{\cite[Tag \Tag{#1}]{Stacks}}
\newcommand{\citetwostacks}[2]{\cite[Tags \Tag{#1} and \Tag{#2}]{Stacks}}
\author{Shiji Lyu}
\newtheorem{Thm}{Theorem}[section]
\newtheorem{Lem}[Thm]{Lemma}
\newtheorem{Cor}[Thm]{Corollary}
\newtheorem{Prop}[Thm]{Proposition}
\theoremstyle{definition}
\newtheorem{Def}[Thm]{Definition}
\theoremstyle{remark}
\newtheorem{Rem}[Thm]{Remark}
\newtheorem{Ques}[Thm]{Question}
\begin{document}
\begin{abstract}
   A Noetherian reduced ring $A$ is called a birational derived splinter if for all proper birational maps $X\to\Spec(A)$,
   the canonical map $A\to Rf_*\cO_X$ splits.
   In equal characteristic zero this property characterizes rational singularities, but much less can be said in positive or mixed characteristics.
   In this paper, we prove some fundamental properties of this notion, including the behavior under localization, taking a pure subring, taking direct limit, and along an \'etale extension.
   In particular, direct limit of rational singularities in characteristic zero has rational singularities.
   Then, we study residue extensions (in arbitrary characteristic), and openness and regular extensions in positive characteristic, parallel to Datta-Tucker and the author's previous works on splinters.
\end{abstract}
\maketitle

\section{Introduction}
%To state the main results in the best generality available to us,
%we need the notion of a G-ring, see \citestacks{07GH}.
%A ring essentially of finite type over a Noetherian complete local ring (in particular a field) is a G-ring, see
%\citetwostacks{07PS}{07PV}.
%A Noetherian ring is a G-ring if and only if it is locally quasi-excellent, cf. \cite[(34.A)]{Matsumura}.

%\begin{Thm}[=Theorem \ref{thm:ScalarGMIRROR}]\label{thm:ScalarG}
%Let $(A,\fm_A,k_A)\to (B,\fm_B,k_B)$ be a flat homomorphism of Noetherian local rings.
%Assume that $\fm_B=\fm_AB$, that $k_B/k_A$ is separable,
%and that $A$ is a G-ring.
%If $A$ is a birational derived %splinter, so is $B$. % is a splinter.
%\end{Thm}
%n-excellent:
%We also obtain a version of Theorem \ref{thm:ScalarG} without the G-ring hypothesis, but it requires a regular base ring to apply base change.
%Note that in the statement below we do not assume $R\to S$ injective.
In algebraic geometry, it is extremely useful to consider proper birational models $Y\to X$ of a given algebraic variety $X$.
One question that naturally arises is how to compare the (coherent or local) cohomology of $X$ and that of $Y$.
Works in this direction include
\cite{CR15}.

In this paper, we shall specialize to the problem of local splitting.
More explicitly, we will study the (Noetherian reduced) rings $A$ such that for all proper birational maps $f:X\to\Spec(A)$,
the canonical map $A\to Rf_*\cO_X$
splits in the derived category of $A$.
We say $A$ is a \emph{birational derived splinter}.

There are two closely related notions, \emph{splinters} and \emph{derived splinters},
defined by considering finite surjective and proper surjective $f$'s respectively instead of proper birational $f$'s.
Regular rings are derived splinters \cite{Bha18},
and splinters and derived splinters are the same in positive and mixed characteristic (\cite{Bha12} and the forthcoming work \cite{BL}),
so in particular, splinters are birational derived splinters.
In characteristic zero, a result of Kova\'cs
\cite{KovacsOther}
shows that for affine varieties over the complex numbers,
the notion of birational derived splinter is the same as the notion of rational singularity.
Recently, Murayama \cite{Takumi} has extended the Kodaira-type vanishing theorems to general schemes of equal characteristic zero,
from which it follows that
the notion of birational derived splinter is the same as the notion of rational singularity
for all quasi-excellent rings of equal characteristic zero,
as noted below in Proposition \ref{prop:charactrizeQBDS}.

However, 
the notion of birational derived splinters does not seem to be very well studied itself,
especially in positive and mixed characteristics.
This is the aim of this paper.
Our first main result is a collection of fundamental properties of birational derived splinters
(Corollary \ref{cor:MDSisLocal}, Corollary \ref{cor:NoetherianPureDescendMDS}, Theorem \ref{thm:MDSlimitHARD}, and Theorem \ref{thm:EtaleDominateM}): %, in view of Lemma \ref{lem:characterizeNoetherianMDS}):
\begin{Thm}
\label{thm:NoetherianManifest}
Let $A$ be a Noetherian ring. 
Then the followings hold.

\begin{enumerate}
    \item $A$ is a birational derived splinter if and only if all localizations of $A$ at prime (resp. maximal) ideals of $A$ are birational derived splinters.
    
    \item
    If $A$ is a birational derived splinter, then
    for all cyclically pure ring maps $A'\to A$,
    $A'$ is a Noetherian birational derived splinter.
    
    \item \label{ManifestLimit} If $A$ is the direct limit of a system of Noetherian birational derived splinters,
    then $A$ is a birational derived splinter.
    
    \item If $A$ is a birational derived splinter and $B$ is an \'etale $A$-algebra, then $B$ is a birational derived splinter.
\end{enumerate}
\end{Thm}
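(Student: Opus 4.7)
My plan is to handle the four items separately, each through a different mix of base-change, approximation, and local-global arguments, all revolving around how the splitting of $A \to Rf_*\cO_X$ behaves under the relevant operation.

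For (1), I would encode the splitting criterion as a linear surjectivity condition: $A \to Rf_*\cO_X$ splits iff the $A$-linear map $H^0 R\Hom_A(Rf_*\cO_X, A) \to A$ given by precomposition with the unit hits $1$. Since $f$ is proper and $A$ Noetherian, $Rf_*\cO_X$ lies in $D^b_{\mathrm{coh}}(A)$, so $R\Hom_A(-,A)$ commutes with flat base change; combined with the fact that surjectivity onto $A$ is detected at maximal ideals (as $A$ is a finitely generated $A$-module), this yields the implication ``all $A_\fm$ BDS $\Rightarrow$ $A$ BDS'', once one observes that base change of a proper birational map along $A \to A_\fm$ remains proper birational. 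For the converse, I would spread out a given proper birational $g: Y \to \Spec(A_\fp)$ to some $\Spec(A_s)$ using Grothendieck's limit theorems and extend via Nagata compactification to a proper birational $f: X \to \Spec(A)$ whose localization at $\fp$ is $g$; the BDS property of $A$ then localizes.

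Parts (2) and (3) are approximation statements. For (3), given $A = \colim A_i$ and a proper birational $f : X \to \Spec(A)$, I would descend $f$ to $f_i : X_i \to \Spec(A_i)$ proper and (for large enough $i$) birational by the limit theorems, extract a splitting $s_i$ of $A_i \to Rf_{i*}\cO_{X_i}$ from the BDS hypothesis on $A_i$, and propagate to a splitting over $A$. The main technical obstacle is that $A_i \to A$ is typically not flat, so $Rf_*\cO_X$ is not simply a base change; instead one must identify it (or at least $R\Hom_A(Rf_*\cO_X, A)$) with an appropriate filtered colimit along the system $\{Rf_{j*}\cO_{X_j}\}_{j\geq i}$ and exploit exactness of filtered colimits---this is where the hard technical content of Theorem \ref{thm:MDSlimitHARD} sits. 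For (2), given a proper birational $f' : X' \to \Spec(A')$ and cyclically pure $A' \to A$, I would construct a proper birational $f: X \to \Spec(A)$ dominating the base change, obtain a splitting over $A$, and descend to $A'$ via the ideal-detecting property characterizing cyclic purity. Noetherianness of $A'$ I would treat as a separate input.

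Part (4) is where I expect the main obstacle to lie. I would reduce to the local case via (1) and use the structure of étale morphisms to present $B$ as a localization of a standard étale algebra $A[x]/(P)$, with (3) available for any colimit-type step. The essential point---the ``étale domination'' in Theorem \ref{thm:EtaleDominateM}---is to show that every proper birational $g: Y \to \Spec(B)$ admits a proper birational $f : X \to \Spec(A)$ whose base change $X \times_A B$ dominates $Y$ over $\Spec(B)$. Once such $f$ is constructed, the splitting $Rf_*\cO_X \to A$ base-changes along the flat (étale) map $A \to B$ to a splitting $R(f \times_A B)_* \cO_{X \times_A B} \to B$, and precomposing with the natural $Rg_*\cO_Y \to R(f \times_A B)_* \cO_{X \times_A B}$ yields the desired retraction $Rg_*\cO_Y \to B$. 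Constructing the dominating $f$ is the crux: the idea is to use Chow's lemma to replace $g$ by a blow-up along a finitely generated ideal $I \subset B$, descend $I$ to an ideal of an appropriate $A$-algebra through the étale structure, and form the corresponding blow-up on $\Spec(A)$, with any remaining infinitary step absorbed by (3).
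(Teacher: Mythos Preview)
Your overall decomposition and your treatment of (1) via the trace-ideal formalism match the paper exactly. The sketches for (2)--(4), however, each miss the key maneuver.

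For (3), your proposal to ``identify $R\Hom_A(Rf_*\cO_X, A)$ with an appropriate filtered colimit'' does not work: the \v{C}ech complex does realize $Rf_*\cO_X$ as a colimit of the $Rf_{i*}\cO_{X_i}$, but $\Hom(-,A)$ converts colimits into limits, so there is no colimit of splittings to extract. The paper's route (which it flags as the hardest part, contrary to your expectation) is different: first reduce, via a truncation argument (Lemma~\ref{lem:PerfToPseudoCoh}), from the pseudo-coherent target $Rf_*\cO_X$ to all perfect intermediaries $S\to K\to Rf_*\cO_X$; then use compactness of perfect complexes (Lemma~\ref{lem:EventuallySplitEpi}) to factor each such $K$ through $Rf_{i*}\cO_{X_i}\otimes^L_{S_i}S$ for some large $i$, where the given splitting $Rf_{i*}\cO_{X_i}\to S_i$ can simply be tensored up. Compactness, not a colimit identity for $\Hom$, is the point.

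For (4), ``descend $I$ to an ideal of an appropriate $A$-algebra \dots\ and form the corresponding blow-up on $\Spec(A)$'' is not a construction: an ideal of $B$ (or of $A[x]/(P)$) has no natural descent to $A$, and the coefficient trick you may have in mind (see the remark after Theorem~\ref{thm:SmCharpMIRROR}) applies only to polynomial extensions, not \'etale ones. The paper instead compactifies $T=\Spec(B)$ inside a finite $\overline{T}\to S$, extends $g$ to $\overline{g}:\overline{Y}\to\overline{T}$, and applies Raynaud--Gruson flattening \citestacks{0B49} to produce a blow-up $f:X\to S$ over which the strict transform of $\overline{Y}$ is finite locally free; a degree argument then gives $X\times_S T\cong Y$. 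For (2), ``the ideal-detecting property characterizing cyclic purity'' is not enough to descend a derived splitting from $A$ to $A'$; the paper (Proposition~\ref{prop:PureDescendMDS}) completes $A'$ at a maximal ideal and invokes Fedder's lemma that a pure map from a complete local ring splits as a module map---this is how the splitting over $A$ is transported to $A'^\wedge$. Noetherianness of $A'$ is not a free ``separate input'' either: it comes from Hochster's theorem \cite{Hoc77} that a cyclically pure subring of a Noetherian normal ring is Noetherian normal.
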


We note that of these results, the most difficult one is (\ref{ManifestLimit}).
This is because of the lack of flatness, as mentioned in \cite[Remark 5.3.3(2)]{ADVal}.
Specializing to characteristic zero,
we have the following consequence, which the author believes to be new:

\begin{Cor}[=Corollary \ref{cor:limitQmirror}]\label{cor:MainLimitQ}
Let $A$ be a Noetherian quasi-excellent $\bQ$-algebra.
If $A$ is the direct limit of a system of Noetherian quasi-excellent $\bQ$-algebras that have rational singularities,
then $A$ has rational singularities.
\end{Cor}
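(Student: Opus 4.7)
The plan is to deduce this directly from two inputs already discussed in the introduction: the characterization of rational singularities via birational derived splinters in equal characteristic zero (Proposition \ref{prop:charactrizeQBDS}), and the direct-limit stability of the birational derived splinter property (part (\ref{ManifestLimit}) of Theorem \ref{thm:NoetherianManifest}).

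First I would write $A=\colim_i A_i$ where each $A_i$ is a Noetherian quasi-excellent $\bQ$-algebra with rational singularities. Because each $A_i$ is quasi-excellent of equal characteristic zero, Proposition \ref{prop:charactrizeQBDS} gives that each $A_i$ is a birational derived splinter. Then Theorem \ref{thm:NoetherianManifest}(\ref{ManifestLimit}) applies to the system $\{A_i\}$ and yields that $A$ is itself a birational derived splinter; note this step uses the hypothesis that $A$ is Noetherian, which is in the statement, while no quasi-excellence is needed here.

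Finally, since by hypothesis $A$ is itself a Noetherian quasi-excellent $\bQ$-algebra, I apply Proposition \ref{prop:charactrizeQBDS} in the reverse direction to the birational derived splinter $A$, concluding that $A$ has rational singularities.

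The only conceptual content is that both hypotheses in Proposition \ref{prop:charactrizeQBDS} (quasi-excellence and equal characteristic zero) are imposed on $A$ directly, while the transfer of the splinter property along the colimit needs only Noetherianness on the terms and the colimit. So there is no real obstacle beyond confirming the hypotheses line up; the nontrivial work is hidden inside Theorem \ref{thm:NoetherianManifest}(\ref{ManifestLimit}), whose proof (as noted in the introduction) is the delicate point because of the lack of flatness in the direct system.
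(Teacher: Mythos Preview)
Your proof is correct and matches the paper's approach exactly: the paper states Corollary~\ref{cor:limitQmirror} as an immediate consequence of Theorem~\ref{thm:MDSlimitHARD} combined with Proposition~\ref{prop:charactrizeQBDS}, which is precisely the two-step argument you give.
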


Next, we can employ the methods in the author's previous work \cite{Splinters} and part of \cite{DTopen} on splinters to the study of birational derived splinters.
As in \cite{Splinters}, we use the notion of a G-ring to state our results in maximal generality.
Unfamiliar readers can refer to \citestacks{07GG} and \cite[(34.A)]{Matsumura}.

The following main results are parallel to \cite[Theorem 1.3]{Splinters}, \cite[Theorem 2.14]{Splinters}, \cite[Theorem 1.0.1]{DTopen}, and
\cite[Theorem 1.1]{Splinters}.
Note that for the results in characteritic $p$,
we involve $F$-purity, since it is required by our method.
See \S\ref{sec:Q} for questions about possible strengthening of these results.
%We hope that the results hold without $F$-purity, or even without a restriction on the chacteristic.

\begin{Thm}[=Theorem \ref{thm:ScalarFoverRMIRROR}]\label{thm:ScalarFoverR}
Let $(S,\fm)\to (S',\fm')$ be a regular homomorphism of Noetherian local rings with $\fm'=\fm S'$.
If $S$ is a birational derived splinter, so is $S'.$
In particular, the completion of a local G-ring that is a birational derived splinter is a birational derived splinter.
\end{Thm}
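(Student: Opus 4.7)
The plan is to use N\'eron--Popescu desingularization to reduce to the essentially smooth local case, and then to decompose essentially smooth extensions into \'etale extensions of polynomial rings followed by localization, invoking the other parts of Theorem \ref{thm:NoetherianManifest} already established.

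By N\'eron--Popescu desingularization, the regular map $S\to S'$ can be written as a filtered colimit $S'=\colim_i T_i$ where each $T_i$ is a smooth, finitely presented $S$-algebra. Let $\fq_i\subset T_i$ denote the preimage of $\fm'\subset S'$. Then each $(T_i)_{\fq_i}$ is a Noetherian local ring essentially smooth over $S$, and $S'=\colim_i (T_i)_{\fq_i}$ (as a filtered colimit of Noetherian local rings). By item (\ref{ManifestLimit}) of Theorem \ref{thm:NoetherianManifest} (equivalently, Theorem \ref{thm:MDSlimitHARD}), it suffices to show that each $(T_i)_{\fq_i}$ is a birational derived splinter.

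Next, by the local structure of smooth morphisms (\citestacks{039P}), after Zariski-localizing $T_i$ on a neighborhood of $\fq_i$, the map $S\to T_i$ factors as $S\to S[x_1,\dots,x_{n_i}]\to T_i$ with the second map \'etale. Combining Corollary \ref{cor:MDSisLocal} (part (1), localization) with Theorem \ref{thm:EtaleDominateM} (part (4), \'etale extensions), the problem reduces to the following claim: for any Noetherian birational derived splinter $S$, the polynomial ring $S[x_1,\dots,x_n]$ is again a birational derived splinter.

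I expect this polynomial-extension claim to be the main obstacle. The natural strategy is to take a proper birational $f:X\to\Spec(S[\bx])$, use limit and spreading-out arguments to present $X$ as a pullback of a proper birational model $Y\to\Spec(S)$ across an appropriate intermediate scheme, and then transport the splitting given by the hypothesis on $S$ back up; the delicate point is that splittings in the derived category do not descend along arbitrary flat base change, so one must engineer a splitting compatibly with the $\bA^n$-bundle structure. Once this polynomial step is handled, the first sentence of the theorem follows from the reductions above. The ``in particular'' assertion is immediate: by definition of a G-ring, for a local G-ring $(S,\fm)$ the completion map $S\to\hat{S}$ is regular, and $\fm\hat{S}$ is the maximal ideal of $\hat{S}$, so the first assertion applies directly.
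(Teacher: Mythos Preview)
Your reductions via Popescu's theorem, the direct-limit result (Theorem~\ref{thm:MDSlimitHARD}), localization (Corollary~\ref{cor:MDSisLocal}), and \'etale ascent (Theorem~\ref{thm:EtaleDominateM}) are all valid. However, the remaining polynomial-extension claim is not a technicality but is essentially the entire content of the theorem after these easy reductions, and you have not proved it.

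Observe that your argument nowhere uses the hypothesis $\fm'=\fm S'$. What you are really attempting is the statement ``if $S\to S'$ is any regular map of Noetherian rings and $S$ is a birational derived splinter, then so is $S'$'' (cf.\ Corollary~\ref{cor:SmoothImpliesReg}); the paper records precisely this as an open question in \S\ref{sec:Q}. Your sketch for the polynomial case does not work as stated: a proper birational $X\to\Spec(S[\bx])$ need not arise as the pullback of anything proper and birational over $\Spec(S)$, and there is no evident mechanism to transport a splitting across the $\bA^n$-direction in general.

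The paper's proof is genuinely different and exploits the residue-extension hypothesis in an essential way. Following \cite[Theorem~1.4]{Splinters}, one produces a pure map $S'\to S_\natural$ into an ultrapower of (an ind-\'etale extension of) $S$; such an ultrapower is still an MDS by Lemma~\ref{lem:MDSUltraprod}, and then Proposition~\ref{prop:PureDescendMDS} descends the birational-derived-splinter property to $S'$. The condition $\fm'=\fm S'$ is exactly what makes the pure map $S'\to S_\natural$ exist: the residue field of $S'$ embeds into an ultrapower of that of $S$, and the shared $\fm$-adic structure allows this to lift. The paper does eventually handle smooth ascent (Theorem~\ref{thm:SmCharpMIRROR}), but only in characteristic $p$ under an $F$-purity assumption, and even there Theorem~\ref{thm:ScalarFoverRMIRROR} is used as an ingredient rather than deduced.
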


\begin{Thm}[see Corollary \ref{cor:normallocusoverFpure} and Theorem \ref{thm:openMIRROR}(\ref{FpureOpen})]\label{thm:openoverFpure}
Let $R$ be a Noetherian local $\bF_p$-algebra, and $A$ an $R$-algebra essentially of finite type.
Assume $R$ is $F$-pure, and that $R\to A$ is $F$-pure \cite[(2.1)]{Has10}.

Then the locus of prime ideals $\fp$ of $A$ such that $A_\fp$ is a birational derived splinter (resp., a splinter, a normal local ring) is open in $\Spec(A)$.
\end{Thm}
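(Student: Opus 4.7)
The plan is to prove openness by showing each of the three loci is both constructible and closed under generalization in the Noetherian space $\Spec(A)$; in a Noetherian space these two conditions together force openness. Closure under generalization is the easier half: for the BDS locus it is part (1) of Theorem \ref{thm:NoetherianManifest}, for the splinter locus it is the analogous localization statement from \cite{Splinters}, and for the normal locus it is classical. Constructibility of the normal locus does not really use $F$-purity, because $A$ is essentially of finite type over a Noetherian local ring and is therefore Nagata, and openness of the normal locus of a Nagata ring is well-known; this is the content of the auxiliary Corollary \ref{cor:normallocusoverFpure}. Constructibility of the splinter locus in precisely this generality is already the main theorem of \cite{Splinters}. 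The new contribution is therefore constructibility of the BDS locus.

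For the BDS case I would adapt the Frobenius-descent strategy of \cite{Splinters}. Fix a prime $\fp$ of $A$ with $A_\fp$ a BDS and a proper birational $f\colon X\to\Spec(A)$, and try to show that a splitting of $A_\fp\to Rf_*\cO_{X\times_A A_\fp}$ spreads to a Zariski neighborhood of $\fp$ in $\Spec(A)$. The $F$-purity of $R$, transported along the $F$-pure map $R\to A$, produces Frobenius splittings on $A$ that are compatible with the $R$-structure; pushing these through the proper birational model should produce the required splittings near $\fp$. A generic-freeness / finite-presentation argument, available because $A$ is essentially of finite type over $R$, then upgrades pointwise splitting data at $\fp$ to splitting data on a Zariski open. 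At this point one uses part (1) of Theorem \ref{thm:NoetherianManifest} again to propagate the BDS property from that open to all of its points.

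The principal obstacle is the universal quantifier over all proper birational maps. Unlike the splinter setting, where the property can be tested against a single normalization or a controlled family of finite covers, BDS is defined against an unbounded class of morphisms, and a priori no single test morphism controls the property in a whole neighborhood. I expect the argument to use $F$-purity to reduce this quantifier to a specific family, for instance one arising from alterations or from a Frobenius-type direct limit, for which the Frobenius-trace techniques of \cite{Splinters} and \cite{DTopen} apply directly. Establishing the compatibility of these Frobenius splittings with the derived pushforward $Rf_*\cO_X$ in the non-flat, essentially-of-finite-type setting is likely the most delicate step, and will probably rely on the fact that proper birational pushforward commutes with localization and on the $R$-linearity built into the hypothesis that $R\to A$ is $F$-pure.
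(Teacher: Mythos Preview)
Your plan has a factual error and a genuine gap.

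For the normal locus you assert that $A$ is Nagata simply because it is essentially of finite type over a Noetherian local ring. This is false: a Noetherian local ring need not be Nagata, and nothing in the hypotheses forces $R$ to be excellent or even a G-ring. The paper treats openness of the normal locus (Corollary \ref{cor:normallocusoverFpure}) and finiteness of the normalization (Corollary \ref{cor:normalizationofFpure}) as genuine consequences of the $F$-purity hypotheses, and Remark \ref{rem:Openfail} shows these fail for $F$-pure G-rings that are not of the specific form in the theorem. So $F$-purity is not optional here.

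For the BDS locus you correctly identify the universal quantifier over proper birational maps as the main obstacle, but your proposed fix---spreading a splitting of a fixed $f$ and hoping Frobenius techniques reduce the test class---never says how the quantifier is actually controlled. The paper's mechanism is quite different and much sharper: one shows (Lemma \ref{lem:TraceIdealFcomp}) that each trace ideal $\ft(X/A)$ is uniformly $F$-compatible, and then, after passing by a faithfully flat gamma-construction map to an $F$-finite $F$-split ring, invokes the fact that an $F$-split ring has only finitely many uniformly $F$-compatible ideals. Hence the entire set $\Sigma_A=\{\ft(X/A)\}$ is finite (Theorem \ref{thm:TraceIdealsFinitelyMany}), and the BDS locus is the open set $\bigcap_{\fa\in\Sigma'}(\Spec(A)\setminus V(\fa))$. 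This finiteness is what kills the universal quantifier; your spreading-out picture does not supply an analogue. The same finiteness argument also yields the normal and splinter loci in this setting, which is why the paper packages all three together.
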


%Parallel to \cite{DTopen},
%we can prove the following openness result.
\begin{Thm}[=Theorem \ref{thm:openMIRROR}(\ref{Gopen})]\label{thm:open}
Let $A$ be a Noetherian $\bF_p$-algebra.
Assume either that $A$ is $F$-finite, or that $A$ is essentially of finite type over a Noetherian local G-ring.

Then the locus of prime ideals $\fp$ of $A$ such that $A_\fp$ is an $F$-pure birational derived splinter is open in $\Spec(A)$.
\end{Thm}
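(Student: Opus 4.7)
The strategy mirrors \cite[Theorem 1.1]{Splinters}: combine Theorem \ref{thm:openoverFpure}, the local openness result, with the openness of the $F$-pure locus (known under either of our hypotheses).

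Under either hypothesis, the $F$-pure locus is open in $\Spec(A)$: this is classical for $F$-finite rings via the finiteness of Frobenius, and for algebras essentially of finite type over a Noetherian local G-ring it follows because such $A$ is itself a G-ring, so standard openness theorems in $F$-singularity theory apply. Fixing $\fp_0$ with $A_{\fp_0}$ an $F$-pure birational derived splinter and replacing $A$ by $A_f$ for some $f\notin\fp_0$, I may therefore assume every $A_\fp$ is $F$-pure. The task reduces to showing that, under this additional assumption, the birational derived splinter locus of $\Spec(A)$ is open near $\fp_0$, since then the intersection with the already-open $F$-pure locus will yield the desired open neighborhood.

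In the case where $A$ is essentially of finite type over a Noetherian local G-ring $R$, I would apply Theorem \ref{thm:openoverFpure} to $R\to A$, after further shrinking to ensure that $R$ is $F$-pure and that $R\to A$ is $F$-pure in the sense of \cite{Has10}; the latter is a local condition on $\Spec(A)$ and descends from the global $F$-purity of $A$ once the structure map is chosen appropriately. In the $F$-finite case, I would invoke Kunz and Gabber to obtain that $A$ is a G-ring (indeed excellent), then realize $A$ locally near $\fp_0$ as an algebra essentially of finite type over a regular $F$-finite local subring $R$, using a Cohen-style presentation together, if necessary, with a $\Gamma$-construction to guarantee $R$ is well-behaved. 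Since $R$ is regular, hence $F$-pure, the induced map $R\to A$ is $F$-pure on the $F$-pure locus of $A$, and Theorem \ref{thm:openoverFpure} again delivers an open neighborhood of $\fp_0$ in the birational derived splinter locus.

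The main obstacle is the $F$-finite case: constructing a suitable local base ring $R\subseteq A$ such that $R\to A$ is essentially of finite type and $F$-pure near $\fp_0$, so as to fit the hypotheses of Theorem \ref{thm:openoverFpure}. The $\Gamma$-construction of Hochster--Huneke is the standard tool for this reduction, and I expect its invocation, together with the verification that the resulting morphism is $F$-pure in Hashimoto's sense, to be the most technical ingredient of the argument.
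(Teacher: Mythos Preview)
Your reduction to Theorem \ref{thm:openoverFpure} has a genuine gap in the G-ring case. You write that you will ``apply Theorem \ref{thm:openoverFpure} to $R\to A$, after further shrinking to ensure that $R$ is $F$-pure and that $R\to A$ is $F$-pure.'' But $R$ is a \emph{fixed} Noetherian local G-ring over which $A$ is essentially of finite type; localizing $A$ does nothing to $R$, and there is no hypothesis whatsoever on the $F$-singularities of $R$. Indeed, $A_{\fp_0}$ can be $F$-pure (even regular) while $R$ is not: take $R$ any non-$F$-pure complete local domain and $\fp_0$ a prime of $A=R[x]$ lying over $(0)\subset R$. So you cannot place yourself in the setup of Theorem \ref{thm:openoverFpure}, whose hypotheses demand that the base itself be $F$-pure. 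Your suggestion that $F$-purity of $R\to A$ ``descends from the global $F$-purity of $A$'' is also unjustified: $F$-purity of the morphism in Hashimoto's sense is a condition on the relative Frobenius, and $F$-purity of $A$ alone does not force it when $R$ is not regular.

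The paper does not attempt this reduction. Instead it proves directly (Theorem \ref{thm:TraceIdealsFinitelyMany}) that under either hypothesis, once $A$ is $F$-pure, the set $\Sigma_A=\{\ft(X/A)\mid X\text{ proper over }A\}$ is finite. In the $F$-finite case this is immediate from $F$-splitness and Schwede's finiteness of uniformly $F$-compatible ideals; in the G-ring case one passes to a faithfully flat $F$-finite $F$-pure extension $A\to B$ via the $\Gamma$-construction \cite[Theorem 3.4]{MurGamma} and uses $\ft(X/A)B=\ft(X_B/B)$ to pull finiteness back. With $\Sigma_A$ finite, after reducing to $A$ normal, the birational derived splinter locus is the finite intersection $\bigcap_{\fa\in\Sigma'}\bigl(\Spec(A)\setminus V(\fa)\bigr)$ over the M-morphism trace ideals, using Lemma \ref{lem:LocalizeM} to extend M-morphisms from localizations. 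This argument treats both hypotheses uniformly and never needs a base ring $R$ with good $F$-singularities. (Your $F$-finite sketch via Gabber's theorem is more defensible---with $R$ regular, $F$-purity of $R\to A$ does follow from that of $A$---but it is a longer detour than the paper's direct route through $F$-splitness.)
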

Here the corresponding result for the normal locus is well-known, see Lemma \ref{lem:Aquasiexcellent} below, and for the splinter locus it is \cite[Theorem 1.0.1]{DTopen}.

%This will allow us to prove the following ascending result.
\begin{Thm}[=Theorem \ref{thm:SmCharpMIRROR}]\label{thm:SmCharp}
%Let $S$ be a plinter of pure characteristic $p$.
Let $S\to A$ be a regular homomorphism of Noetherian $\bF_p$-algebras.
If $S$ is an $F$-pure birational derived splinter, so is $A$. %then $A$ is a splinter.
\end{Thm}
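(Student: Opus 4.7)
The plan is to follow the strategy of the proof of \cite[Theorem 1.1]{Splinters}, adapting it to birational derived splinters using the tools developed earlier in this paper.

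First, by Theorem \ref{thm:NoetherianManifest}(1), localization reduces the problem to the case where $S\to A$ is a local homomorphism of local rings, since both $F$-purity and BDS localize and regular homomorphisms are stable under localization. Then Popescu's general smoothing theorem \citestacks{07GC} writes $A$ as a filtered colimit $\colim_\lambda B_\lambda$ of smooth finite-type $S$-algebras. By Theorem \ref{thm:NoetherianManifest}(\ref{ManifestLimit}) the problem reduces to showing each $B_\lambda$ is $F$-pure BDS; $F$-purity of $B_\lambda$ ascends from $S$ along the smooth map by standard results \cite{Has10}.

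For a given $B_\lambda$, the local criterion (Theorem \ref{thm:NoetherianManifest}(1)) lets me work at each localization $(B_\lambda)_\fq$. Zariski-locally, the smooth map factors as $S\to S[t_1,\ldots,t_n]\to B_\lambda$ with the second étale, so Theorem \ref{thm:NoetherianManifest}(4) further reduces to showing $S[t_1,\ldots,t_n]$ is BDS at every prime. After localizing $S$ to reduce to the case where the chosen prime $\fq\subset S[t_1,\ldots,t_n]$ satisfies $\fq\cap S=\fm$, the case $\fq=\fm S[t_1,\ldots,t_n]$ follows directly from Theorem \ref{thm:ScalarFoverR}: the localization $S(t_1,\ldots,t_n)$ is a local regular extension of $S$ whose maximal ideal is $\fm\cdot S(t_1,\ldots,t_n)$.

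The main obstacle is the remaining case $\fq\supsetneq\fm S[t_1,\ldots,t_n]$, where the residue field extension $\kappa(\fq)/\kappa(\fm)$ is a nontrivial finite (possibly inseparable) extension and the hypothesis $\fm S[t_1,\ldots,t_n]_\fq=\fq S[t_1,\ldots,t_n]_\fq$ of Theorem \ref{thm:ScalarFoverR} fails. My plan is to split off the maximal separable subextension via a finite étale extension of $S$, handled by Theorem \ref{thm:NoetherianManifest}(4), and to treat the remaining purely inseparable part by combining the openness of the BDS locus (Theorem \ref{thm:openoverFpure}, applicable since $S[t_1,\ldots,t_n]$ is essentially of finite type over the $F$-pure local ring $S$) with a Frobenius-splitting argument exploiting $F$-purity of $S$, propagating BDS from the already-handled generic fiber point to $\fq$. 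This last specialization step is the technical crux of the argument, and is where the $F$-purity hypothesis enters essentially.
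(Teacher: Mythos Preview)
Your overall strategy matches the paper's: reduce to the smooth case via Popescu (the paper packages this as Corollary~\ref{cor:SmoothImpliesReg}), then adapt the proof of \cite[Theorem~1.1]{Splinters} by substituting the birational-derived-splinter analogues of the splinter ingredients. Your reductions---localization, factoring a smooth map as a polynomial extension followed by an \'etale map, handling the \'etale part via Theorem~\ref{thm:EtaleDominateM}, and handling $\fq=\fm S[t_1,\dots,t_n]$ via Theorem~\ref{thm:ScalarFoverR}---are correct and agree with the paper's outline.

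The genuine gap is in your treatment of the crux case $\fq\supsetneq\fm S[t_1,\dots,t_n]$. Your claim that openness will ``propagate BDS from the already-handled generic fiber point to $\fq$'' is backwards: openness of a locus passes information from a point to its \emph{generalizations}, not to its specializations, so knowing that $S[t]_{\fm S[t]}$ is a birational derived splinter does not, via openness alone, yield the same for $S[t]_\fq$ when $\fm S[t]\subsetneq\fq$. Moreover, you never invoke pure descent (Proposition~\ref{prop:PureDescendMDS}), which the paper explicitly lists among the five ingredients carried over from \cite{Splinters}; this, together with ind-\'etale ascent (Corollary~\ref{cor:IndEtMDS}), is what allows one to pass to auxiliary extensions such as the strict henselization of $S$ and then return. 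That passage is also what makes your ``split off the separable part via a finite \'etale extension of $S$'' step work cleanly, since lifting a prescribed finite separable residue extension to a finite \'etale cover requires $S$ to be henselian. Finally, your ``Frobenius-splitting argument'' for the purely inseparable residue case is left entirely unspecified; you are right that $F$-purity is the essential hypothesis here (it is exactly what makes Theorem~\ref{thm:openoverFpure} applicable), but you have not said what the argument actually is.
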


Finally, we mention a curious result, which, as does Theorem \ref{thm:openoverFpure}, fails for a general $F$-pure Noetherian ring $A$, even if we assume $A$ is a G-ring.
See Remark \ref{rem:Openfail}.
%we obtained in the course toward the main theorems.

\begin{Thm}[=Corollary \ref{cor:normalizationofFpure}]
Let $R$ be a Noetherian local $\bF_p$-algebra, $A$ an $R$-algebra essentially of finite type.
Assume $R$ is $F$-pure, and that $R\to A$ is $F$-pure \cite[(2.1)]{Has10}.
Then the normalization of $A$ is finite over $A$.
\end{Thm}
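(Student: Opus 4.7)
The argument builds on Theorem \ref{thm:openoverFpure}, which under the same hypotheses shows that the normal locus of $A$ is open in $\Spec A$.

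First I would reduce to the case where $A$ is a domain. Since $R$ is $F$-pure and $R\to A$ is $F$-pure in the sense of Hashimoto, $A$ is itself $F$-pure and hence reduced. For a reduced Noetherian ring, the normalization is the product of the normalizations of the quotients $A/\fp$ over the minimal primes $\fp$, so after verifying that the $F$-pure homomorphism hypothesis descends appropriately to each such quotient (possibly after replacing $R$ by a suitable quotient), it suffices to treat the case $A$ is a domain.

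Under this assumption, finiteness of $A^\nu$ over $A$ is equivalent to the conductor $C = \{c \in A : cA^\nu \subseteq A\}$ being nonzero: any nonzero $c \in C$ exhibits $A^\nu$ as an $A$-submodule of the finitely generated module $c^{-1}A$, and then finite generation over the Noetherian ring $A$ is automatic. By Theorem \ref{thm:openoverFpure}, the non-normal locus $V(J)$ is a proper closed subset of $\Spec A$ (if $A$ is already normal there is nothing to prove); picking any nonzero $f \in J$, we have that $A_f$ is normal, hence $A^\nu \subseteq A_f$, so each element of $A^\nu$ is carried into $A$ by some a priori element-dependent power of $f$. What remains is to promote this to a single uniform power $f^n \in C$.

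Obtaining this uniform bound is the main obstacle, and is where the $F$-purity of the homomorphism $R\to A$ (rather than just of $A$ as a ring) must be used in an essential way; indeed, the remark following the statement notes that the conclusion fails for general $F$-pure $G$-rings $A$. The natural strategy is to reduce to an auxiliary $F$-finite or complete setting in which finiteness of normalization is classical (since $F$-finite rings are excellent by Kunz, and essentially of finite type algebras over a complete Noetherian local ring are Nagata), and then descend along the relative Frobenius splittings supplied by $F$-purity of $R\to A$. The delicate point is that $F$-purity of homomorphisms is fragile under such reductions, so this descent step, which I expect to be the technical heart of the argument, must be arranged with care and will likely use the openness given by Theorem \ref{thm:openoverFpure} to pin down where the conductor can fail to be nonzero.
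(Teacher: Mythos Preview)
Your proposal has the right opening move---Theorem~\ref{thm:openoverFpure} gives openness of the normal locus, hence $A_f$ is normal for some nonzerodivisor $f$---but the second half is both vaguer and harder than what the paper actually does. You frame the remaining step as promoting element-dependent powers of $f$ to a uniform one via some unspecified descent along relative Frobenius splittings; this is not needed, and your sketch does not really indicate how it would go.

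The paper instead argues pointwise. Since $A$ is $F$-pure (by \cite[Proposition 2.4]{Has10}), for every $\fp\in\Spec A$ the completion $A_\fp^\wedge$ is $F$-pure as well (e.g.\ \cite[Lemma 3.26]{Has10}), hence reduced. A Noetherian local ring with reduced completion is analytically unramified, so its normalization is finite (\citestacks{032Y}). Thus the normalization of $A_\fp$ is finite for \emph{every} $\fp$. Now one invokes the classical criterion \cite[Proposition 6.13.6]{EGA4_2}: for a reduced Noetherian ring whose normal locus contains a dense open and whose localizations all have finite normalization, the global normalization is finite. This bypasses your conductor argument entirely.

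Two further remarks on your outline. First, the reduction to domains is unnecessary, and your parenthetical ``after verifying that the $F$-pure homomorphism hypothesis descends appropriately to each such quotient'' hides a real difficulty: $F$-purity of $R\to A$ does not obviously pass to $R'\to A/\fp$ for any natural choice of $R'$. The paper simply works with reduced $A$ throughout. Second, note that the paper's argument uses only that $A$ itself is $F$-pure (plus the openness input from Theorem~\ref{thm:openoverFpure}); the $F$-purity of the map $R\to A$ is used only to feed into that theorem and to conclude $A$ is $F$-pure, not in the finiteness step itself.
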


The layout of this paper is as follows.
\S\ref{sec:Prep} contains necessary general results of rings, modules, schemes, and morphisms.
Since our method requires the use of non-Noetherian rings,
we need to introduce and study a variant of proper birational morphisms that is more robust.
This is done in \S\S\ref{subsec:Mmor}.
Another class of morphisms may work as well.
%Besides, a large portion (\S\S\ref{subsec:PerfPsCoh} and \S\S\ref{subsec:limit})
%is dedicated for the limit result, Theorem \ref{thm:NoetherianManifest}(\ref{ManifestLimit}).
In \S\ref{sec:BDS} we study birational derived splinters and a non-Noetherian variant of which defined using the class of morphisms defined in \S\S\ref{subsec:Mmor}.
In \S\ref{sec:lim} we treat direct limit, and in \S\ref{sec:Et} we treat \'etale extensions.
The rest of the main results are proved in \S\ref{sec:ResidueExtn} and \S\ref{sec:OpenandReg}.
We ask some questions in \S\ref{sec:Q}.\\

\textsc{Acknowledgements}. We thank Rankeya Datta, Linquan Ma, Takumi Murayama, and Kevin Tucker, discussions with whom for the previous work \cite{Splinters} persist to be helpful; we also thank János Kollár, Longke Tang, and Chenyang Xu for helpful discussions.

\section{Preparations}\label{sec:Prep}

For a ring $A$, the expression ``$\dim A=0$'' means ``every prime ideal of $A$ is maximal.''
The reduction of $A$ is denoted by $A_{\red}$.
We do not distinguish between $A$-modules and quasi-coherent $\cO_{\Spec(A)}$-modules, and similarly $D(A)=D(QCoh(\cO_{\Spec(A)}))=D_{QCoh}(\cO_{\Spec(A)})$, cf. \citestacks{06Z0}.

\subsection{Residue extensions}

\begin{Def}\label{def:ResidueExtension}
A \emph{residue extension} is a flat local map $\varphi:(A,\fm,k)\to (B,\fn,l)$ of Noetherian local rings such that $\fn=\fm B$.
A \emph{separable residue extension} is a residue extension where $l/k$ is separable \citestacks{030O};
a \emph{regular residue extension} is a residue extension where $\varphi$ is regular \citestacks{07BZ}.
\end{Def}

A regular residue extension is always separable (cf. \citestacks{0322}).
The converse holds in the case $A$ is a G-ring:

\begin{Lem}[\cite{Andre}]
\label{lem:Gseparable=regular}
Let $\varphi:A\to B$ be a separable residue extension of Noetherian local rings.
If $A$ is a G-ring then $\varphi$ is a regular residue extension.
\end{Lem}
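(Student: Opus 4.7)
The plan is to reduce the claim to André's theorem on ascent of regularity: for a flat local homomorphism $A \to B$ between Noetherian local rings, if $A$ is a G-ring and the closed fiber $B/\fm B$ is geometrically regular over $k = A/\fm$, then $A \to B$ is automatically a regular homomorphism.

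First I would observe that, since $\varphi$ is a residue extension, $\fn = \fm B$, so its closed fiber $B/\fm B = B/\fn$ is simply the residue field $l$. Next I would recall that a field extension $l/k$ is geometrically regular as a $k$-algebra if and only if it is separable in the sense of \citestacks{030O}: separability makes $l \otimes_k k'$ Artinian and reduced, hence a product of fields and in particular regular, for every finite $k'/k$; conversely, reducedness of $l \otimes_k k'$ for all finite purely inseparable $k'/k$ already encodes separability. These two observations translate the hypothesis of the lemma into exactly the input required by André's theorem.

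With these reductions in place, André's theorem applies and yields that $\varphi$ is regular; combined with $\fn = \fm B$, this is the definition of a regular residue extension. The only substantive ingredient is André's theorem itself, cited here as \cite{Andre}, whose proof requires nontrivial homological machinery (notably around formal fibers and André--Quillen homology). Thus the real content of the lemma is entirely packaged into that citation, and the present argument amounts to verifying that the hypotheses of that theorem are met; no step poses a genuine obstacle beyond invoking the cited result.
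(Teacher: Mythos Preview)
Your proof is correct, but it takes a somewhat different route from the paper's. You invoke the general form of Andr\'e's theorem---flat local map from a G-ring with geometrically regular closed fiber is regular---as a black box, and then simply observe that a separable field extension is geometrically regular. The paper instead unpacks the argument explicitly via completion: it passes to the commutative square with $A^\wedge$ and $B^\wedge$, uses \citestacks{07PM} to see that the completed map $\varphi^\wedge$ (a separable residue extension of \emph{complete} Noetherian local rings) is regular, uses the G-ring hypothesis to get $A\to A^\wedge$ regular, and then descends regularity of $\varphi$ through the faithfully flat map $B\to B^\wedge$ via \citetwostacks{07QI}{07NT}. Your approach is shorter and conceptually clean, deferring all the work to the cited theorem; the paper's approach has the virtue of making visible exactly where the G-ring hypothesis enters (regularity of $A\to A^\wedge$) and of being traceable to specific Stacks Project tags rather than to the Andr\'e--Quillen machinery behind the general statement. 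In substance the two arguments are cousins: the paper's completion-and-descent argument is essentially the proof of the special case of Andr\'e's theorem you cite.
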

\begin{proof}
We have a commutative diagram
\[\begin{CD}
A @>{\varphi}>> B\\
@V{\iota_A}VV @V{\iota_B}VV\\
A^\wedge @>{\varphi^\wedge}>> B^\wedge
\end{CD}\]
of flat local maps of Noetherian local rings.
Since $\varphi$ is a separable residue extension, so is $\varphi^\wedge$,
thus $\varphi^\wedge$ is regular \citestacks{07PM},
and so is $\iota_A$ by the definition of a G-ring.
Since $\iota_B$ is faithfully flat we see $\varphi$ regular by \citetwostacks{07QI}{07NT}.
\end{proof}

\subsection{Total fraction rings}

\begin{Lem}\label{lem:Partial=TotalFraction}
Let $A$ be a ring. %, $K$ its total fraction ring.
Let $S$ be a multiplicative subset of $A$ that consists of nonzerodivisors.
If $\dim(S^{-1}A)=0$, then the total fraction ring of $A$ is $S^{-1}A$.
\end{Lem}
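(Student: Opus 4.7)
The plan is to show that every nonzerodivisor of $A$ is already invertible in $S^{-1}A$, which by the universal property of localization gives the equality with the total fraction ring. Denote by $T$ the set of all nonzerodivisors of $A$, so that the total fraction ring is $T^{-1}A$. Since $S$ consists of nonzerodivisors, $S \subseteq T$, and the canonical map $A \to T^{-1}A$ factors uniquely through $S^{-1}A$, giving a canonical map $S^{-1}A \to T^{-1}A$; we shall show it is invertible.

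First, since $A \to S^{-1}A$ is flat, the image $x/1 \in S^{-1}A$ of any $x \in T$ remains a nonzerodivisor (alternatively, a direct check: if $(x/1)(a/s)=0$ then $tx a = 0$ in $A$ for some $t \in S$, and $tx$ is a nonzerodivisor as the product of two nonzerodivisors, so $a=0$). It therefore suffices to establish the following general fact: in a ring $R$ with $\dim R = 0$, every nonzerodivisor is a unit.

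For this, suppose $x \in R$ is a nonzerodivisor but lies in some maximal ideal $\fm$. Since $\dim R = 0$, $\fm$ is also a minimal prime, so $\fm R_\fm$ is the nilradical of $R_\fm$ and $x/1$ is nilpotent in $R_\fm$. Hence there exist $n \geq 1$ and $y \in R \setminus \fm$ with $y x^n = 0$. But $x^n$ is a nonzerodivisor, forcing $y = 0$, contradicting $y \notin \fm$. Thus every nonzerodivisor is a unit.

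Applying this to $R = S^{-1}A$, every $x/1$ with $x \in T$ is a unit in $S^{-1}A$. By the universal property of $T^{-1}A$, the map $A \to S^{-1}A$ factors uniquely through $T^{-1}A$, and this factorization is inverse to the canonical map $S^{-1}A \to T^{-1}A$. No real obstacle is anticipated; the only subtlety is checking that nonzerodivisors survive as nonzerodivisors after localization, which is immediate from flatness.
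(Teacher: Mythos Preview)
Your proof is correct and follows essentially the same approach as the paper: both reduce to the fact that in a zero-dimensional ring every nonzerodivisor is a unit, proved by localizing at a maximal ($=$ minimal) prime to obtain a nilpotent element. You are simply more explicit about the reduction via universal properties and about nonzerodivisors surviving localization, which the paper leaves implicit in its opening ``It suffices to show.''
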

\begin{proof}
It suffices to show that every nonzerodivisor  $x\in S^{-1}A$ is invertible.
If not, take a maximal ideal $\fm$ of $S^{-1}A$ that contains $x$.
Since every prime ideal of $S^{-1}A$ is maximal, $\fm$ is a minimal prime and $x$ is nilpotent in $(S^{-1}A)_{\fm}$, contradiction.
\end{proof}

\begin{Lem}\label{lem:TotalFractionProduct}
Let $X$ be a set, $A_x\ (x\in X)$ be a family of reduced rings with total fraction rings $K_x$.
If every $A_x$ has finitely many minimal primes, then $K:=\prod_x K_x$ is the total fraction ring of $A:=\prod_x A_x$, and $\dim K=0$.
\end{Lem}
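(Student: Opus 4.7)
The plan is to apply Lemma \ref{lem:Partial=TotalFraction} to the multiplicative subset $S \subseteq A$ consisting of all tuples $(s_x)_x$ such that each $s_x$ is a nonzerodivisor in $A_x$. Coordinate-wise, any such tuple is a nonzerodivisor in $A$. It thus suffices to produce a canonical isomorphism $S^{-1}A \cong K$ and to verify that $\dim K = 0$.

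For the identification, let $S_x \subseteq A_x$ be the set of nonzerodivisors, so $K_x = S_x^{-1} A_x$. The natural map $A \to K$, $(a_x)_x \mapsto (a_x/1)_x$, sends every element of $S$ to a unit, hence by the universal property of localization factors through a ring map $\varphi \colon S^{-1}A \to K$. Surjectivity: any $(a_x/s_x)_x \in K$ with $s_x \in S_x$ is the image of $(a_x)/(s_x)$. Injectivity: if $(a_x)/(s_x) \mapsto 0$, then for each $x$ there exists $t_x \in S_x$ with $t_x a_x = 0$ in $A_x$, and since $t_x$ is a nonzerodivisor, $a_x = 0$; hence $(a_x) = 0$ in $A$.

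For the zero-dimensionality, I would show each $K_x$ is von Neumann regular and invoke the elementary fact that an arbitrary product of von Neumann regular rings is von Neumann regular (the witness $b_x$ for $a_x = a_x^2 b_x$ is picked coordinate-wise). That $K_x$ is von Neumann regular follows from $K_x$ being reduced (as a localization of the reduced ring $A_x$) and zero-dimensional: primes of $K_x$ correspond to primes of $A_x$ disjoint from $S_x$, i.e.\ contained in the finite union of minimal primes of $A_x$, and prime avoidance forces each such prime to be one of those minimal primes. Hence $K$ is von Neumann regular, in particular $\dim K = 0$, and Lemma \ref{lem:Partial=TotalFraction} yields the claim. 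The principal subtlety is keeping $K$ zero-dimensional: this can fail for an arbitrary product of zero-dimensional rings, but the reducedness of each $K_x$ promotes ``zero-dimensional'' to ``von Neumann regular,'' and the latter is preserved under arbitrary products.
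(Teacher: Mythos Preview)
Your proof is correct and follows essentially the same route as the paper: identify $S^{-1}A$ with $K$ and show $\dim K=0$ by recognizing each $K_x$ as reduced zero-dimensional (equivalently, a finite product of fields / von~Neumann regular) and using that this property is stable under arbitrary products. The only cosmetic difference is that the paper observes directly that $S=\prod_x S_x$ is the \emph{full} set of nonzerodivisors of $A$ (so $K$ is the total fraction ring by definition), whereas you check only $S\subseteq$ nonzerodivisors and then invoke Lemma~\ref{lem:Partial=TotalFraction}; both work.
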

\begin{proof}
Let $S_x$ be the set of nonzerodivisors of $A_x$.
Then the set of nonzerodivisors of $A$ is $S=\prod_x S_x$, and $S^{-1}A=K$, so $K$ is the total fraction ring of $A$.
By \citetwostacks{02LX}{00EW}, $K_x$ is a product of fields, hence so is $K$.
Thus $\dim K=0$, see \citetwostacks{092G}{092F}.
%It is elementary to see that $K$ is the   Lemma \ref{lem:Partial=TotalFraction}.
\end{proof}

\subsection{M-morphisms}\label{subsec:Mmor}
To define birational derived splinters, we need to make sense of what is ``birational.''
We use the following class of morphisms that behaves better with respect to limit and base change.
The name ``M-morphism'' indicates its close relation to modifications \citestacks{0AAZ}.
See Lemma \ref{lem:NoetherianBirationalandM} below.

\begin{Def}\label{def:DEFM}
Let $A$ be a ring.
Let $f:X\to S=\Spec(A)$ be a morphism of schemes.
We say $f$ is an \emph{M-morphism}
if $f$ is proper and of finite presentation, and for the total fraction ring $K$ of $A_{\red}$, the base change of $f$ to $\Spec(K)$ is an isomorphism.
\end{Def}

Note that we do not assume the irreducible components of $X$ lying above those of $S$.
%We use the very general notion of a dominant morphism, \citestacks{01RJ}.
%
\begin{Lem}\label{lem:Mbasechange}
Let $A\to B$ be a ring map and %$K$ the total fraction ring of $A_{\red}$,  
$f:X\to \Spec(A)$ an M-morphism.
%Write $T=\Spec(B)$ and $f_T:X_T\to T$ the base change of $f$.

Assume that every nonzerodivisor of $A_{\red}$ is mapped to a nonzerodivisor of $B_{\red}$.
Then the base change of $f$ to $B$ is an M-morphism.
%Then $f_B:X\times_S\Spec(B)\to \Spec(B)$ is an M-morphism
%
%Assume further that $T\to S$ is dominant and $\dim K=0$.
%Then if $f_T$ is an M-morphism, so is $f$.
\end{Lem}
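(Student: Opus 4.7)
The plan is to reduce the statement to a direct application of the isomorphism condition in the definition of an M-morphism, after constructing a suitable map between the two total fraction rings. Write $g:Y:=X\times_A\Spec(B)\to\Spec(B)$ for the base change of $f$. Since properness and finite presentation are both preserved under arbitrary base change, $g$ inherits these properties automatically, so the only nontrivial point is the isomorphism condition over the total fraction ring of $B_{\red}$. Let $K$ and $L$ denote the total fraction rings of $A_{\red}$ and $B_{\red}$ respectively.

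The key step is to construct a canonical ring map $K\to L$ compatible with $A_{\red}\to B_{\red}$. Since $L$ is reduced (being a localization of a reduced ring), the composition $A\to B\to B_{\red}\to L$ factors uniquely as $A\to A_{\red}\to L$. By the hypothesis, every nonzerodivisor of $A_{\red}$ is sent to a nonzerodivisor of $B_{\red}$, hence to a unit of $L$. The universal property of localization then produces the desired factorization $A_{\red}\to K\to L$.

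To finish, I would use associativity of fiber products and the factorization $A\to K\to L$ to compute
\[Y\times_B L \;=\; X\times_A B\times_B L\;=\; X\times_A L\;=\;(X\times_A K)\times_K L.\]
By the M-morphism hypothesis applied to $f$, the factor $X\times_A K$ is canonically identified with $\Spec(K)$, so $Y\times_B L\cong \Spec(K)\times_K L=\Spec(L)$, verifying the isomorphism condition for $g$.

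I do not anticipate a serious obstacle: once the hypothesis on nonzerodivisors is recognized as precisely what is needed to build the factorization $A_{\red}\to K\to L$, the rest is a formal manipulation of fiber products together with the reducedness of $K$ and $L$. The only minor point to be careful with is that $X$ is a scheme over $A$ (not over $A_{\red}$), so all fiber products should be formed over $A$; the factorizations through $A_{\red}$ are used only on the target side via the reducedness of $K$ and $L$.
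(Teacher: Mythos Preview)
Your proposal is correct and takes essentially the same approach as the paper: construct the ring map $K\to L$ from the hypothesis on nonzerodivisors, then conclude by associativity of base change. The paper's proof is simply a terser version of yours---it first replaces $A$ and $B$ by their reductions to streamline notation, then records the single sentence ``$L$ is naturally a $K$-algebra'' and leaves the fiber-product computation implicit.
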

\begin{proof}
We may assume both $A$ and $B$ are reduced.
Now every nonzerodivisor of $A$ is mapped to a nonzerodivisor of $B$,
so the total fraction ring $L$ of $B$ is naturally a $K$-algebra where $K$ is the total fraction ring of $A$.
This proves the lemma.
%This yields the first part of the lemma.
%
%If $T\to S$ is dominant, then $A\to B$ is injective, see \citetwostacks{056A}{056B}.
%Thus $K\to L$ is also injective, hence $\Spec(L)\to\Spec(K)$ is dominant.
%Since $\dim K=0$, $\Spec(L)\to\Spec(K)$ is surjective, see \citestacks{02JQ}.
%Since every $K$-module is flat (\citestacks{092F}), $L$ is faithfully flat over $K$, which gives the second part of the lemma.
\end{proof}

\begin{Lem}\label{lem:LocalizeProperFP}
Let $A$ be a ring, $U$ a multiplicative subset of $A$, $B=U^{-1}A$. %$S=\Spec(A),T=\Spec(B)$.
%Let $K$ be the total fraction ring of $A$ and assume $\dim K=0$; also assume $A$ is reduced and integrally closed in $K$.

Let $S=\Spec(A)$ and let $T$ be either a quasi-compact open subscheme of $S$ or $\Spec(B)$.
Let $g:Y\to T$ be a proper morphism of finite presentation.
Then there exists a proper morphism $f:X\to S$ of finite presentation such that $X\times_S T\cong Y$.
\end{Lem}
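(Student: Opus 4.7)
The plan is to reduce, by standard approximation arguments, to the case where $A$ is Noetherian and $T$ is a quasi-compact open subscheme of $S$, and then to construct $X$ from a Nagata compactification via a scheme-theoretic image argument.

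If $T=\Spec(B)$, I would write $\Spec(B)=\lim_{u\in U}\Spec(A[1/u])$ as a cofiltered limit of quasi-compact open subschemes $D(u)\subseteq S$. By descent of proper morphisms of finite presentation along limits of affine schemes (\citestacks{081F}), there exist some $u\in U$ and a proper morphism of finite presentation $g_u:Y_u\to\Spec(A[1/u])$ with $Y\cong Y_u\times_{\Spec(A[1/u])}\Spec(B)$. This reduces the problem to the quasi-compact open case, with $T$ replaced by $\Spec(A[1/u])$. Next, to reduce to Noetherian base, I would write $A=\colim_i A_i$ as a filtered colimit of its finitely generated $\bZ$-subalgebras. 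For sufficiently large $i$, the quasi-compact open $T\subseteq S$ descends to a quasi-compact open $T_i\subseteq\Spec(A_i)$ (\citestacks{01ZR}) and $g$ descends to a proper morphism of finite presentation $g_i:Y_i\to T_i$ (\citestacks{081F} again). Granting the Noetherian case, a proper morphism of finite presentation $f_i:X_i\to\Spec(A_i)$ with $X_i\times_{\Spec(A_i)}T_i\cong Y_i$ gives the desired extension over $A$ after base change to $\Spec(A)$.

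It then remains to treat the case where $A$ is Noetherian and $T\subseteq S$ is a quasi-compact open. I would apply Nagata compactification to produce a proper morphism $X_0\to S$ together with an open immersion $j:Y\hookrightarrow X_0$ over $S$. Setting $V=X_0\times_S T$, the inclusion $Y\hookrightarrow V$ is both an open immersion (as a base change of $j$, using that $Y\to S$ factors through $T$) and a proper morphism (by the cancellation property, since $Y\to T$ is proper and $V\to T$ is separated), hence a clopen immersion; write $V=Y\sqcup Y'$ with $Y'$ open in $X_0$. I would then let $X\subseteq X_0$ be the scheme-theoretic image of $j$. As a closed subscheme of $X_0$, the morphism $X\to S$ is proper, and of finite presentation because $A$ is Noetherian. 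Since the image of $j$ is $Y$ and $Y\subseteq X_0$ is quasi-compact, $Y$ is topologically dense in $X$, so the open subset $X\cap Y'$, being disjoint from $Y$, must be empty. Therefore $X\times_S T=X\cap V=Y$ topologically, and because the ideal defining $X$ inside $X_0$ vanishes on $Y$, the scheme structures induced on $Y$ from $X_0$ and from $X$ coincide, giving $X\times_S T\cong Y$ as schemes.

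The main obstacle is the Noetherian step: Nagata compactification by itself only embeds $Y$ as an open subscheme of some proper $X_0$ over $S$, and in general $Y$ is strictly smaller than $X_0\times_S T$. Trimming away the extraneous clopen component $Y'$ without breaking properness or altering the scheme structure on $Y$ is what forces us to pass to the scheme-theoretic image and invoke density. By contrast, the two limit reductions (localization to open subscheme, and non-Noetherian to Noetherian base) are routine applications of standard approximation machinery.
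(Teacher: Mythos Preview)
Your proof is correct, but it takes a different route from the paper's. Both arguments begin by reducing $T=\Spec(B)$ to the quasi-compact open case via the same limit argument, and both ultimately rely on Nagata compactification followed by passing to the scheme-theoretic image of $j$, using the clopen argument (which the paper states without detail) to see that the image restricts to $Y$ over $T$. The divergence is in how each handles finite presentation. You reduce to a Noetherian base by writing $A$ as a filtered colimit of finite-type $\bZ$-algebras and descending both $T$ and $g$; over a Noetherian ring properness already gives finite presentation, so the scheme-theoretic image is immediately the desired $X$. The paper instead stays over the original $A$: after taking the scheme-theoretic image $X_2$, which is proper but perhaps not of finite presentation, it writes $X_2=\lim X_i$ as a limit of proper finitely presented $S$-schemes along closed immersions \citestacks{09ZR} and uses \citestacks{081E} to find some $X_{i_0}$ with $X_{i_0}\times_S T=Y$. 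Your approach trades one approximation for another: you avoid the somewhat technical approximation of a qcqs scheme by finitely presented thickenings, at the cost of an extra descent step for the open subset $T$. Both are clean; yours is perhaps the more ``standard'' reduction, while the paper's keeps the base fixed throughout.
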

\begin{proof}
%First, we show that there exists a proper morphism of finite presentation $X_0\to S$ such that $X_0\times_S T=Y$.
Assume first that $T=\Spec(B)$.
We know that $B=\colim_{u\in U}A_u$,
so there exists an element $u\in U$ and a proper morphism $f_1:X_1\to \Spec(A_u)$ of finite presentation such that $X_1\times_{\Spec(A_u)} T\cong Y$, see \citetwostacks{01ZM}{081F}.
So we may replace $T$ by $\Spec(A_u)$, and we may always assume $T$ a quasi-compact open subscheme of $S$.
By Nagata compactification \citestacks{0F41}, there exists an open immersion $j:Y\to X_2$ where $X_2$ is proper over $S$.
Note that $j$ is quasi-compact as $T$ is.
We may replace $X_2$ by the scheme-theoretic image of $j$ to assume $j$ scheme-theoretically dominant, in particular
$X_2\times_S T=Y$.
Note that $X_2$ is not necessarily of finite presentation over $S$.
However, by \citestacks{09ZR},
$X_2=\lim X_i$ where the transition maps are closed immersions and each $X_i$ is proper of finite presentation over $S$.
The system $Y=X_2\times_S T\to X_i\times_S {T}$ of morphisms of $X_i$-schemes satisfies \citestacks{081D} (as $X_2\to X_i$ is a closed immersion), so by \citestacks{081E},
we see that there exists an $i_0$ such that $Y= X_{i_0}\times_S {T}$.
We take $X=X_{i_0}$.
\end{proof}

\begin{Lem}\label{lem:LocalizeM}
Let $A$ be a ring, $U$ a multiplicative subset of $A$, $B=U^{-1}A$. %$S=\Spec(A),T=\Spec(B)$.
Let $K$ be the total fraction ring of $A$ and assume $\dim K=0$; also assume $A$ is reduced and integrally closed in $K$.

Let $S=\Spec(A)$, and $T=\Spec(B)$. % be either an affine open subscheme of $S$ or $\Spec(B)$.
Let $g:Y\to T$ be an M-morphism.
Then there exists an M-morphism $f:X\to S$ such that $X\times_S T\cong Y$.
\end{Lem}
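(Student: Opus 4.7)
My plan is to extend $g$ using Lemma~\ref{lem:LocalizeProperFP} and then use the algebraic structure of $A$ afforded by the hypotheses to arrange the M-property. Since $K$ is reduced and $0$-dimensional, it is von Neumann regular; and since $A$ is integrally closed in $K$, every idempotent of $K$ lies in $A$, in particular the support idempotent $e_u$ of any $u \in A$ (characterized by $e_u^2 = e_u$ and $u e_u = u$). The argument proceeds in two stages: first reduce to the case $U = \{u^n\}$ by enlarging $u$ along the directed set $U$, then in the principal case decompose $A$ via $e_u$ and reduce to Lemma~\ref{lem:LocalizeProperFP}.

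For the first stage, write $B = \colim_{u \in U} A_u$. By \citestacks{01ZM} and \citestacks{081F}, $g : Y \to T$ descends to a proper, finitely presented $g_u : Y_u \to \Spec(A_u)$ with $Y_u \times_{\Spec(A_u)} T \cong Y$ for some $u$. The total fraction ring $K_u$ of $A_u$ is identified with $e_u K$, namely the quotient of $K$ by the annihilator ideal $(1-e_u) K$, and $L = U^{-1} K = \colim_u K_u$, so $\Spec(L) = \lim \Spec(K_u)$. A further application of \citestacks{081F} to the M-isomorphism $Y \times_T \Spec(L) \cong \Spec(L)$ then yields, after replacing $u$ by $u u'$ for some $u' \in U$, an isomorphism $Y_u \times_{\Spec(A_u)} \Spec(K_u) \cong \Spec(K_u)$; that is, $g_u$ itself becomes an M-morphism. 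Replacing the data by $(A_u, \{u^n\}, g_u)$, the problem is reduced to the principal case.

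For the second stage, the idempotent $e = e_u \in A$ gives $A = eA \times (1-e)A$ with $u$ a nonzerodivisor in $eA$ and $u = 0$ in $(1-e)A$; in particular $B = A_u = (eA)_u$. Applying Lemma~\ref{lem:LocalizeProperFP} over $\Spec(eA)$ yields a proper, finitely presented $X_e \to \Spec(eA)$ extending $g$. Because $u$ is a nonzerodivisor in $eA$, the open $\Spec((eA)_u) \hookrightarrow \Spec(eA)$ contains every minimal prime, so the total fraction rings of $(eA)_u$ and $eA$ coincide, and $X_e$ is automatically an M-morphism. Taking $X = X_e \sqcup \Spec((1-e)A)$ with the obvious map to $\Spec(A) = \Spec(eA) \sqcup \Spec((1-e)A) = S$---and noting that the base change of $T \to S$ to $\Spec((1-e)A)$ is empty---gives the desired M-morphism extending $g$.

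The main obstacle lies in the first stage, specifically in preserving the M-property under descent. Descending the proper, finitely presented morphism is a routine application of standard limit results, but descending the generic-fiber isomorphism requires the identification $L = \colim_u K_u$ arising from the idempotent structure of $K$, which relies essentially on $A$ being integrally closed in $K$ with $\dim K = 0$.
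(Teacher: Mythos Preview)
Your argument is correct, modulo a notational slip: ``Replacing the data by $(A_u, \{u^n\}, g_u)$'' should read $(A, \{u^n\}, g_u)$, keeping the original base ring $A$ --- over $A_u$ the element $u$ is already a unit and your second stage would be vacuous. What you actually prove in the first stage is that the M-morphism $g$ over $\Spec(B)$ descends to an M-morphism $g_u$ over $\Spec(A_u)$ for some $u\in U$, reducing the extension problem to the principal case $U=\{u^n\}$ over the \emph{same} $A$. (Also, the second invocation of \citestacks{081F} is really a descent-of-isomorphism statement, for which a tag such as \citestacks{01ZP} is more apt; the mathematics is standard either way.)

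Your route is genuinely different from the paper's. The paper applies Lemma~\ref{lem:LocalizeProperFP} once to obtain a proper finitely presented $X_0\to S$ with $X_0\times_S T\cong Y$, and then corrects $X_0$ globally: since $K$ is reduced of dimension~$0$ the generic fibre $X_0\times_S\Spec(K)\to\Spec(K)$ is automatically flat, and one reads off a canonical clopen $E_0\subseteq\Spec(K)$ over which it is an isomorphism; integral closedness lifts $E_0$ to a clopen $E\subseteq S$, and $X=(X_0\times_S E)\sqcup(S\setminus E)$ does the job. You instead proceed idempotent-by-idempotent: first push the M-condition down through the filtered system $L=\colim_{u}K_{u}$ of total fraction rings to land at some $A_u$, then split $A$ along the single support idempotent $e_u$ so that $u$ is a nonzerodivisor on the relevant factor and the generic fibre is unchanged by the extension. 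Both proofs hinge on the same structural input (idempotents of $K$ lie in $A$), but the paper makes one global clopen correction using the von Neumann regular structure of $K$ and a fibrewise finiteness analysis, whereas your two-stage reduction replaces that analysis with a limit argument and an explicit product decomposition --- more hands-on, and it sidesteps the flatness/finite-locally-free bookkeeping over $\Spec(K)$.
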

\begin{proof}
By Lemma \ref{lem:LocalizeProperFP}, there exists an $X_{0}$ proper of finite presentation over $S$ such that $X_{0}\times_S T=Y$.
In particular $h:X_{0}\times_S \Spec(K)\to \Spec(K)$ is proper of finite presentation. % over $\Spec(K)$.
We note that $h$ is flat since $K$ is reduced and $\dim K=0$, see \citestacks{092F}.
By \citetwostacks{0D4J}{02LS}, there exists a unique clopen subset $E_1$ of $\Spec(K)$ such that $h$ is finite over $E_1$ and that $h$ does not have any finite fiber outside $E_1$.
Then $h_*\cO_{X_{0}\times_S \Spec(K)}$ is finite locally free over $E_1$.
We now see that there is a unique clopen subset $E_0$ of $E_1$ such that $h$ is an isomorphism over $E_0$ and the fibers of $h$ outside $E_0$ are either empty or non-trivial.
Since $A$ is integrally closed in $K$, $E_0$ comes from a clopen subset $E$ of $S$, and it is now clear that $X:=\left(X_{0}\times_S E\right)\sqcup \left(S\setminus E\right)\to S$ is a well-defined M-morphism.

Finally, we show $X\times_S T\cong Y$.
Since $X_{0}\times_S T\cong Y$, it suffices to show that the image of the morphism $T\to S$ is contained in $E$.
As $\dim K=0$, $\dim (U^{-1} K)=0$, so $U^{-1} K$ is the total fraction ring of $B$, see Lemma \ref{lem:Partial=TotalFraction}.
Therefore by construction and the fact that $g$ is an M-morphism, the image of $\Spec(U^{-1}K)$ in $\Spec(K)$ is contained in $E_0$.
Since $\Spec(U^{-1} K)\to T$ is dominant (cf. \citestacks{00EW}) the image of $T$ is therefore contained in $E$ as desired.
\end{proof}

\begin{Lem}\label{lem:OneBmodif}
Let $A$ be an integral domain and $f:X\to \Spec(A)$ a proper surjective morphism of finite presentation.
If there exists a (nonzero) $A$-algebra $B$ such that $f_B:X\times_{\Spec(A)}\Spec(B)\to \Spec(B)$ is an M-morphism, then $f$ is an M-morphism.
\end{Lem}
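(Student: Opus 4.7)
The plan is to use the existence of $B$ to locate a single prime $\mathfrak{p} \in \Spec(A)$ over which the fiber of $f$ is $\Spec(k(\mathfrak{p}))$, and then to promote this pointwise triviality to a neighborhood by a proper-quasi-finite plus Nakayama argument; the irreducibility of $\Spec(A)$, which comes from the assumption that $A$ is a domain, handles the rest.

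First I would observe that $B$ being nonzero forces $B_{\red}$ nonzero, and so its total fraction ring $K'$ is a nonzero reduced ring of dimension zero. Picking any prime $\mathfrak{q}' \subset K'$ and setting $L = K'/\mathfrak{q}'$ gives a field, and the composition $A \to B \to K' \to L$ has kernel a prime $\mathfrak{p} \subset A$, factoring through a field extension $k(\mathfrak{p}) \hookrightarrow L$. Because $f_B$ is an M-morphism, $X \times_A K' \to \Spec(K')$ is an isomorphism, and further base change along $K' \to L$ shows that $X \times_A L \to \Spec(L)$ is an isomorphism. Rewriting this as $X_\mathfrak{p} \times_{k(\mathfrak{p})} L \to \Spec(L)$ and descending along the faithfully flat field extension $k(\mathfrak{p}) \to L$, I would conclude that the fiber $X_\mathfrak{p} \to \Spec(k(\mathfrak{p}))$ is itself an isomorphism.

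Next I would argue that $f$ is an isomorphism over an open neighborhood $V \subset \Spec(A)$ of $\mathfrak{p}$. By upper semicontinuity of fiber dimension and Zariski's main theorem, the fiber $X_\mathfrak{p}$ being a single reduced point implies $f$ is finite over some open $V \ni \mathfrak{p}$. For the resulting coherent sheaf $f_*\cO_X|_V$, the fiber computation $(f_*\cO_X)_\mathfrak{p} \otimes_{A_\mathfrak{p}} k(\mathfrak{p}) = k(\mathfrak{p})$ together with Nakayama's lemma shows, after possibly shrinking $V$, that $\cO_V \to f_*\cO_X|_V$ is surjective. Since $f$ is surjective and $\Spec(A)$ is reduced, the scheme-theoretic image of $f$ is all of $\Spec(A)$, so $\cO_{\Spec(A)} \to f_*\cO_X$ is also injective; hence $\cO_V \to f_*\cO_X|_V$ is an isomorphism, and as $f|_V$ is finite, $f^{-1}(V) \to V$ is an isomorphism.

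Finally, because $A$ is a domain, $\Spec(A)$ is irreducible, so the nonempty open $V$ must contain the generic point. Thus $X \times_A K \to \Spec(K)$ is an isomorphism, where $K$ is the fraction field of $A$ (which coincides with the total fraction ring of $A_{\red} = A$), so $f$ is an M-morphism. I do not anticipate a serious obstacle; the slightly delicate step is ensuring Nakayama's lemma is applied to a coherent sheaf, which is exactly what the reduction to the finite case via ZMT provides.
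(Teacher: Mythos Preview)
Your proposal is correct and follows essentially the same route as the paper: reduce $B$ to a field, find a prime $\fp$ with trivial fiber, then use Zariski's Main Theorem plus Nakayama plus surjectivity/reducedness to conclude $f$ is an isomorphism near $\fp$, whence at the generic point by irreducibility. The only cosmetic difference is that the paper localizes at $\fp$ (so $A$ becomes local and one shows $f_{A_\fp}$ is an isomorphism outright), whereas you spread out to an open neighborhood; the substance is identical.
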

\begin{proof}
Replace $B$ by its reduction, and then a residue field of its total fraction ring, we may assume $B$ a field.
Let $\fp=\ker(A\to B)$.
Since $A$ is an integral domain it suffices to show $f_{A_\fp}$ an M-morphism.
Thus we may assume $(A,\fp,k)$ local.
$B$ is now a faithfully flat $k$-algebra, so $f_k$ is an isomorphism.
By Zariski's Main Theorem (cf. \citestacks{02UP}) $f$ is finite.
By Nakayama's Lemma $f$ is a closed immersion.
Since $f$ is surjective and $A$ is reduced, $f$ is an isomophism, as desired.
\end{proof}

\begin{Cor}\label{cor:Mlimitdomain}
Let $(A_i)_i$ be a direct system of integral domains
%rings with injective transition maps,
and let $A=\colim_i A_i$.
Let $f:X\to S:=\Spec(A)$ be an M-morphism.
Then there exists an index $i$ and an M-morphism $f_i:X_i\to S_i:=\Spec(A_i)$ whose base change to $A$ is $f$.
\end{Cor}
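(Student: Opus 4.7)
The plan is to descend $f$ to a proper morphism of finite presentation over some $A_{i_0}$, and then enlarge the index to $j\ge i_0$ so that Lemma \ref{lem:OneBmodif} applies with $B=A$.

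First, by the standard descent of proper morphisms of finite presentation (\citetwostacks{01ZM}{081F}), we find $i_0$ and a proper morphism $f_{i_0}\colon X_{i_0}\to\Spec(A_{i_0})$ of finite presentation whose base change to $A$ is $f$. For $j\ge i_0$, let $f_j:=f_{i_0}\times_{\Spec(A_{i_0})}\Spec(A_j)$, which is still proper of finite presentation. Now $A\ne 0$ (a filtered colimit of domains retains $1\ne0$, since otherwise $1=0$ in some $A_i$) and $f_j\times_{\Spec(A_j)}\Spec(A)=f$ is an M-morphism by hypothesis. Applying Lemma \ref{lem:OneBmodif} to $f_j$ with $B=A$ therefore reduces the task to verifying that $f_j$ is surjective for some $j$.

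By Chevalley's theorem for finitely presented morphisms (\citestacks{054K}), the image of the proper morphism $f_{i_0}$ is a closed constructible subset of $\Spec(A_{i_0})$, hence of the form $V(I_{i_0})$ for some \emph{finitely generated} ideal $I_{i_0}\subseteq A_{i_0}$. Since topological images commute with preimages under base change, the image of $f_j$ is $V(I_{i_0}A_j)$. Next, $A$ is reduced (as a filtered colimit of reduced rings), and $f$ itself is surjective: its image is closed in $\Spec(A)$ and contains each generic point, where $f_K$ is an isomorphism. Consequently, the image of the composite $X\to X_{i_0}\to\Spec(A_{i_0})$, which equals that of $\Spec(A)\to\Spec(A_{i_0})$, lies in $V(I_{i_0})$. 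Setting $\fp_\infty:=\ker(A_{i_0}\to A)$, the injection $A_{i_0}/\fp_\infty\hookrightarrow A$ makes $A\otimes_{A_{i_0}}\kappa(\fp_\infty)$ a nonzero localization of $A$, any of whose primes contracts to a prime of $A$ lying over $\fp_\infty$. Hence $\fp_\infty\in V(I_{i_0})$, i.e., $I_{i_0}\subseteq\fp_\infty=\bigcup_{j\ge i_0}\ker(A_{i_0}\to A_j)$.

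Since $I_{i_0}$ is finitely generated and the kernels form a directed union, there exists $j\ge i_0$ with $I_{i_0}\subseteq\ker(A_{i_0}\to A_j)$; for such $j$, $I_{i_0}A_j=0$, so $f_j$ is surjective, and the proof is complete. The main obstacle is ensuring $I_{i_0}$ is finitely generated, which is exactly where the finite-presentation assumption on $f$ enters crucially; without it, the image of $f_{i_0}$ might fail to be constructible, and the key step $I_{i_0}\subseteq\fp_j$ for some single $j$ could fail.
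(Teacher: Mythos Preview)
Your proof is correct and follows essentially the same route as the paper's: descend to a proper finitely presented morphism over some $A_{i_0}$, enlarge the index so that the morphism becomes surjective, and then apply Lemma~\ref{lem:OneBmodif} with $B=A$. The only difference is that the paper dispatches the surjectivity step in one stroke by citing \citestacks{07RR}, whereas you reprove that descent by hand via Chevalley's theorem and the finite generation of the defining ideal of the image; both arguments are valid, and yours would be slightly cleaner if you noted at the outset that $A$ is in fact an integral domain (not merely reduced), so that $(0)\subset A$ already contracts to $\fp_\infty$.
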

\begin{proof}
There exists an index $0$ and a morphism $f_0:X_0\to S_0$ of finite presentation whose base change to $A$ is $f$, see \citestacks{01ZM}.
We may assume $f_0$ proper surjetive by \citetwostacks{081F}{07RR}.
Then $f_0$ is an M-morphism by Lemma \ref{lem:OneBmodif}.
\end{proof}

%\begin{Lem}\label{lem:MlimitNoetherian}
%Let $A=\colim_i A_i$ be a direct limit of rings.
%Assume that each $A_i$ is a finite product of integral domains.
%Let $f:X\to S:=\Spec(A)$ be an M-morphism.
%There exists an index $i$ and an M-morphism $f_i:X_i\to S_i:=\Spec(A_i)$ whose base change to $A$ is $f$.
%\end{Lem}
%\begin{proof}
%\ref{lem:Mlimit},
%there exists an index $0$ and a proper morphism $f_0:X_0\to S_0$ of finite presentation whose base change to $A$ is $f$.
%We may assume $f_0$ surjective by \citestacks{07RR}.
%Over each irreducible component of $S_0$ that intersects the image of $S$,
%$f_0$ is an M-morphism by Lemma \ref{lem:OneBmodif} (and Lemma \ref{lem:Mbasechange}).
%Over other irreducible components we can just replace $f_0$ by the identity map, as we did in the proof of Lemma \ref{lem:LocalizeM}.
%\end{proof}

Birational morphisms are defined as in \citestacks{01RN}.
\begin{Lem}\label{lem:NoetherianBirationalandM}
Let $A$ be a Noetherian reduced ring, $f:X\to S=\Spec(A)$ be an M-morphism.
Then there exists a reduced closed subschme $Z$ of $X$ such that $Z\to S$ is birational.

Conversely, a proper birational morphism $g:Y\to S$ is an M-morphism.
\end{Lem}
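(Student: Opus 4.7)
The plan is to handle the two directions of the lemma separately.

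\textbf{Forward direction.} Since $A$ is Noetherian and reduced, its total fraction ring factors as $K=\prod_{i=1}^n\kappa(\fp_i)$ over the minimal primes $\fp_1,\dots,\fp_n$, so $\Spec K$ consists of $n$ points mapping bijectively to the generic points of $S$. The defining isomorphism $X\times_S\Spec K\xrightarrow{\sim}\Spec K$ gives a section $s\colon\Spec K\to X$ of $f$, and I would take $Z\subset X$ to be the scheme-theoretic image of $s$; since $s$ is quasi-compact with reduced source, $Z$ is a reduced closed subscheme of $X$, topologically the union of the closures of the $n$ points $x_i:=s(\Spec\kappa(\fp_i))$. To see $Z\to S$ is birational, I would observe that distinct $x_i$ are incomparable in the specialization order on $X$ (since distinct minimal primes of $A$ are), so the $x_i$ are precisely the generic points of $Z$, mapping bijectively to the generic points of $S$; and $\cO_{Z,x_i}=\kappa(x_i)=\kappa(\fp_i)=\cO_{S,\fp_i}$ by construction of $s$, so the local ring map at each generic point is an isomorphism.

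\textbf{Converse.} For proper birational $g\colon Y\to S$, finite presentation over Noetherian $S$ is automatic; the task is to show each fiber $Y_{\fp_i}:=Y\times_S\Spec\kappa(\fp_i)$ equals $\Spec\kappa(\fp_i)$. Birationality supplies a unique generic point $\xi_i$ of $Y$ above $\fp_i$ with $\cO_{Y,\xi_i}\cong\kappa(\fp_i)$. My plan is to show first that $\{\xi_i\}$ is clopen in $Y_{\fp_i}$: on an affine chart $\Spec B$ of $Y_{\fp_i}$ around $\xi_i$ with prime $\fq$ corresponding to $\xi_i$, the hypothesis $B_\fq=\kappa(\fp_i)$ lets one invert a single $u\in B\setminus\fq$ killing $\fq$ to realize $\Spec B_u=\Spec\kappa(\fp_i)$ as an open neighborhood, while the Nullstellensatz makes $\xi_i$ closed because its residue field coincides with the base field $\kappa(\fp_i)$ and $Y_{\fp_i}$ is of finite type over $\kappa(\fp_i)$. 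Then I would rule out any other point of the fiber: for $y$ with $g(y)=\fp_i$, a generic point $\xi$ of $Y$ specializing to $y$ satisfies $\fp_i\in\overline{\{g(\xi)\}}$ with $g(\xi)$ a minimal prime by birationality, forcing $g(\xi)=\fp_i$ and hence $\xi=\xi_i$, so $y\in\overline{\{\xi_i\}}^Y$. Invoking the subspace identity $\overline{\{\xi_i\}}^Y\cap Y_{\fp_i}=\overline{\{\xi_i\}}^{Y_{\fp_i}}$ (valid because $\xi_i\in Y_{\fp_i}$) and the clopen property then forces $y=\xi_i$.

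\textbf{Main obstacle.} The delicate step is excluding extraneous points of the generic fiber, which requires combining the scheme-theoretic input (stalk at $\xi_i$ in the fiber is the base field, so $\xi_i$ is clopen there) with the topological observation that any other such point must specialize $\xi_i$ in the ambient $Y$. The naive hope of declaring $\{\xi_i\}$ clopen already in $Y$ is false---$\cO_{Y,\xi_i}$ being a field does not make $\xi_i$ closed in $Y$---so the clopen property is only available after base change to the fiber, and the subspace-closure identity must be used to transport specialization information from $Y$ to $Y_{\fp_i}$.
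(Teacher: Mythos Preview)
Your argument is correct. The forward direction is the same construction as the paper's: take $Z$ to be the scheme-theoretic image of $X\times_S\Spec(K)\cong\Spec(K)$ in $X$. The paper records only that one sentence; your verification that $Z\to S$ is actually birational (incomparability of the $x_i$, identification of the stalks) fills in what the paper leaves to the reader.

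For the converse you take a genuinely different route. The paper simply notes that finite presentation is automatic over a Noetherian base and then invokes \citestacks{0BAB}, which gives a dense open $V\subseteq S$ over which a birational, locally-of-finite-type morphism to a reduced target with finitely many irreducible components is an isomorphism; since $\Spec(K)\to S$ factors through any such $V$, the generic base change is an isomorphism in one stroke. Your approach instead analyzes each generic fiber $Y_{\fp_i}$ directly: you show $\xi_i$ is clopen in the fiber (open by shrinking an affine chart, closed by the Nullstellensatz), and then use the bijection on generic points to force every point of the fiber to specialize from $\xi_i$, hence to equal $\xi_i$. Combined with the open chart $\Spec B_u=\Spec\kappa(\fp_i)$, this pins down the scheme structure of the one-point fiber. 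Your argument is self-contained and elementary, at the cost of being longer; the paper's citation is quicker but outsources exactly the fiber analysis you carry out. One small expository point: you should state explicitly at the end that, since $Y_{\fp_i}$ has the single point $\xi_i$, the open neighborhood $\Spec B_u\cong\Spec\kappa(\fp_i)$ is all of $Y_{\fp_i}$, so the base change $Y\times_S\Spec(K)\to\Spec(K)$ is an isomorphism of schemes and not merely a bijection on points.
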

\begin{proof}
For the first part we can just take $Z$ to be the scheme-theoretic image of $X\times_S\Spec(K)$ in $X$ where $K$ is the total fraction ring of $A$.

Every proper $A$-scheme is of finite presentation since $A$ is Noetherian.
Therefore the second part follows immediately from \citestacks{0BAB}.
\end{proof}

\subsection{Splitting in the derived category: trace ideals}\label{subsec:Derived}
Let $A$ be a ring, $f:X\to S=\Spec(A)$ be a qcqs morphism of schemes.
Then $Rf_*\cO_X$ lies in $D^b(A)$, see \citestacks{08D5}.
The formation of $Rf_*\cO_X$ commutes with flat base change, see \citestacks{08IB}.
Therefore if $A\to Rf_*\cO_X$ splits, then the same is true after any flat base change $A\to B$.

The canonical map $A\to Rf_*\cO_X$ splits if and only if the induced map  $\Hom_{D(A)}(Rf_*\cO_X,A)\to \Hom_{D(A)}(A,A)$ of $A$-modules is surjective, by general category theory.
The formation of this map commutes with flat base change if $Rf_*\cO_X$ is pseudo-coherent \citestacks{08CB}, see \citetwostacks{0A6A}{0A64}.
This holds when $A$ is Noetherian and $f$ is proper (cf. \citestacks{08E8}).

Using the identification $A=\Hom_{D(A)}(A,A)$, the morphism $f$ determines an ideal of $A$, namely the image of $\Hom_{D(A)}(Rf_*\cO_X,A)\to \Hom_{D(A)}(A,A)=A$.
We denote this ideal by $\ft(f)$ or $\ft(X/A)$.
As noted in the previous paragraph, $A\to Rf_*\cO_X$ splits if and only if $\ft(f)=A$,
and the formation of $\ft(f)$ commutes with flat base change if $A$ is Noetherian and $f$ is proper.
Note that in general, for any flat ring map $A\to B$, we have at least $\ft(X/A)B\subseteq \ft(X_B/B)$. % as noted in the first paragraph.

\section{Birational derived splinters}\label{sec:BDS}

In this section we study some fundamental properties of birational derived splinters, and the non-Noetherian variant MDSs as defined below.
Many results are parallel to those in \cite[\S 3]{Splinters}, and we do refrain from writing too many ``cf.''s.
In this section and what follows,
we shall use the term ``birational derived splinter'' for Noetherian rings and ``MDS'' for general rings to state our results, and these notions are the same for Noetherian rings, Lemma \ref{lem:characterizeNoetherianMDS}.
%We also obtained a satisfactory limit result in the Noetherian case, Theorem \ref{thm:MDSlimitHARD}, overcoming the difficulty discussed in \cite[Remark 5.3.3]{ADVal} in this case.

\begin{Def}\label{def:MDS}
A ring $A$ is an \emph{M-derived splinter}, shorthand \emph{MDS}, if %$A$ is reduced, and
for every M-morphism (Definition \ref{def:DEFM}) $f:X\to\Spec(A)$ the map $A\to Rf_*\cO_X$ splits in $D(A)$.
\end{Def}

\begin{Lem}\label{lem:MDSnormal}
An MDS $A$ is reduced and integrally closed in its total fraction ring.
\end{Lem}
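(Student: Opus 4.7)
For reducedness, I would fix a nilpotent $b\in A$ and aim to show $b=0$. The closed immersion $\pi\colon\Spec(A/(b))\to\Spec(A)$ is proper and of finite presentation since $(b)$ is a principal ideal, and because $b$ maps to zero in $A_{\red}$ and hence in the total fraction ring $K$ of $A_{\red}$, we have $A/(b)\otimes_A K=K$, so $\pi$ is an M-morphism. The MDS property then produces an $A$-linear splitting $\sigma\colon A/(b)\to A$ with $\sigma(\bar{1})=1+bc$ for some $c\in A$, and applying $A$-linearity to $b\cdot\bar{1}=0$ yields $b(1+bc)=0$. Iterating as $b=(-bc)^k b$ for every $k\geq 0$ and using the nilpotency of $b$ forces $b=0$.

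For integral closedness (now $A$ is reduced with total fraction ring $K$), given $b\in K$ integral over $A$ satisfying some monic polynomial $p(T)\in A[T]$, I would write $b=x/s$ with $s\in A$ a nonzerodivisor and introduce $B=A[T]/(p(T), sT-x)$. The morphism $f\colon\Spec(B)\to\Spec(A)$ is finite, since $B$ is a quotient of the free $A$-module $A[T]/(p(T))$, hence proper; it is of finite presentation as the defining ideal is two-generated in $A[T]$. Upon base change to $K$ the relation $sT-x$ becomes $s(T-b)=0$ with $s$ a unit in $K$, forcing $T=b$, after which $p(T)$ vanishes; hence $B\otimes_A K=K$ and $f$ is an M-morphism.

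Applying the MDS property together with the identification $Rf_*\cO_{\Spec(B)}=B$ yields an $A$-linear splitting $\sigma\colon B\to A$. Tensoring $\sigma\circ(A\to B)=\mathrm{id}_A$ with $K$ and using $A\otimes_A K=B\otimes_A K=K$ shows that $\sigma\otimes_A K$ is the identity on $K$. Chasing $T\in B$ through the commutative square
\[
\begin{CD}
B @>\sigma>> A\\
@VVV @VVV\\
K @= K
\end{CD}
\]
(where the left vertical sends $T\mapsto b$ and the right vertical is the inclusion $A\hookrightarrow K$) yields $\sigma(T)=b$ in $K$, whence $b=\sigma(T)\in A$.

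The main subtlety is engineering an M-morphism that certifies integrality in the non-Noetherian setting: the naive choice $\Spec(A[b])\to\Spec(A)$ may fail to be of finite presentation since $A[b]$ need not be a finitely presented $A$-algebra, whereas $B=A[T]/(p(T), sT-x)$ has an explicit two-generator presentation in $A[T]$ while still satisfying $B\otimes_A K=K$, making finite presentation automatic.
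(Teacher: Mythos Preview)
Your argument is correct. Two small comments and then a comparison.

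For reducedness, the MDS condition gives a \emph{retraction} of $A\to A/(b)$, i.e.\ $\sigma\circ q=\mathrm{id}_A$, so in fact $\sigma(\bar 1)=1$ on the nose; the iteration via $b=(-bc)^k b$ is unnecessary. This is exactly the paper's argument, phrased a bit more elaborately.

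For integral closedness your route genuinely differs from the paper's. The paper takes an arbitrary finite $A$-subalgebra $B\subseteq K$, writes it as a filtered colimit $B=\colim_i B_i$ of finite, finitely presented $A$-algebras with surjective transition maps (\citestacks{09YY}), observes that $(B_i)_K=K$ for $i\gg 0$ so $\Spec(B_i)\to\Spec(A)$ is an M-morphism, and deduces from the resulting splittings that $aB_i\cap A=aA$ for all $a$, hence $aB\cap A=aA$; since $B\subseteq K$ this forces $A=B$. Your approach instead builds, for each integral element $b=x/s$, the explicit finitely presented algebra $A[T]/(p(T),sT-x)$ and reads off $b=\sigma(T)\in A$ directly from the splitting after base change to $K$. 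Your construction is more elementary and self-contained, avoiding the limit/approximation input; the paper's argument is more uniform (it handles any finite subextension at once) and leads naturally to the cyclic-purity viewpoint used elsewhere in the paper.
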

\begin{proof}
Let $a\in A$ be nilpotent.
Then $\Spec(A/aA)\to \Spec(A)$ is an M-morphism (Definition \ref{def:DEFM}), so $A\to A/aA$ is a split map of $A$-modules.
Thus $a=0$ and $A$ is reduced.

Let $K$ be the total fraction ring of $A$ and $B\subseteq K$ be a finite $A$-algebra.
We have $B=\colim_i B_i$ where $B_i$ are finite of finite presentation over $A$ and the transition maps $B_i\to B_j$ are surjective, see \citestacks{09YY}.
Using either \citestacks{081E} or \citestacks{00QO} we see that $(B_i)_K=K$ for large $i$, so $\Spec(B_i)\to\Spec(A)$ is an M-morphism.
Thus $A\to B_i$ is split, and we see $aB_i\cap A=aA$.
Taking colimit, we see that $aB\cap A=aA$ for all $a\in A$.
Since $B\subseteq K,A=B$, as desired.
\end{proof}

\begin{Def}
A Noethrian ring $A$ is a \emph{birational derived splinter}, %shorthand \emph{BDS},
if $A$ is reduced and for every proper birational morphism $f:X\to\Spec(A)$ the map $A\to Rf_*\cO_X$ splits in $D(A)$.
%
%A Noetherian ring $A$ is a \emph{birational derived splinter}, %shorthand \emph{BDS},
%if $A$ is a direct product of Noetherian integral birational derived splinters.
\end{Def}

Here, the notion of a birational morphism to a not necessarily integral scheme is defined in \citestacks{01RN}.

This is the same notion as Definition \ref{def:MDS} for Noetherian rings:

\begin{Lem}\label{lem:characterizeNoetherianMDS}
Let $A$ be a Noetherian ring.
Then $A$ is an MDS if and only if $A$ is a birational derived splinter, in which case $A$ is normal.
\end{Lem}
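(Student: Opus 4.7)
The plan is to reduce everything to Lemma \ref{lem:NoetherianBirationalandM}, which explicitly compares M-morphisms with proper birational morphisms in the Noetherian setting, and to Lemma \ref{lem:MDSnormal}, which already outputs reducedness and integral closure in the total fraction ring for any MDS. Since $A$ is Noetherian, properness automatically gives finite presentation, so the two notions of ``splinter'' differ only in the class of test morphisms allowed, and the two halves of Lemma \ref{lem:NoetherianBirationalandM} will bridge them.

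For the direction MDS $\Rightarrow$ birational derived splinter, I would first invoke Lemma \ref{lem:MDSnormal} to get that $A$ is reduced. Given a proper birational morphism $g : Y \to \Spec(A)$, the second half of Lemma \ref{lem:NoetherianBirationalandM} tells me that $g$ is an M-morphism, so $A \to Rg_*\cO_Y$ splits by the MDS hypothesis.

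Conversely, starting from an M-morphism $f : X \to S = \Spec(A)$, the first half of Lemma \ref{lem:NoetherianBirationalandM} produces a reduced closed subscheme $i : Z \hookrightarrow X$ with $g := f \circ i : Z \to S$ proper and birational. The closed immersion $i$ gives a surjection $\cO_X \to i_*\cO_Z$, and since $i_*$ is exact on quasi-coherent sheaves, applying $Rf_*$ yields a morphism $\alpha : Rf_*\cO_X \to Rg_*\cO_Z$ in $D(A)$ which, by functoriality, satisfies $\alpha \circ (A \to Rf_*\cO_X) = (A \to Rg_*\cO_Z)$. Assuming $A$ is a birational derived splinter, I pick a retraction $\sigma$ of $A \to Rg_*\cO_Z$; then $\sigma \circ \alpha$ is a retraction of $A \to Rf_*\cO_X$, so $A$ is an MDS.

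Finally, the normality assertion follows from Lemma \ref{lem:MDSnormal} together with the standard fact that a Noetherian reduced ring integrally closed in its total fraction ring is a finite product of normal domains, hence normal in the sense that every localization at a prime is a normal domain. The main obstacle here is essentially nonexistent: the real content was already packaged into Lemma \ref{lem:NoetherianBirationalandM} (producing the birational closed subscheme via a scheme-theoretic image construction) and into Lemma \ref{lem:MDSnormal}, and what remains is a short diagram chase in $D(A)$ plus the observation that a closed immersion is compatible with the canonical comparison maps from $A$.
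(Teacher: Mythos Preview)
Your proof is correct and follows essentially the same approach as the paper: both arguments reduce everything to Lemma \ref{lem:MDSnormal} (for reducedness and normality) and Lemma \ref{lem:NoetherianBirationalandM} (for the comparison of test morphisms). The paper's proof is simply terser, writing that the equivalence ``follows immediately from Lemma \ref{lem:NoetherianBirationalandM}'' without spelling out the factorization $A \to Rf_*\cO_X \to Rg_*\cO_Z$ that you make explicit; your exposition is a faithful unpacking of that sentence.
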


\begin{proof}
A Noetherian MDS is normal by Lemma \ref{lem:MDSnormal}, so we always have $A$ reduced.
The equivalence of MDS and birational derived splinter follows immediately from Lemma \ref{lem:NoetherianBirationalandM}.
\end{proof}

In equal characteristic zero, we have the following characterizaition of birational derived splinters.

\begin{Prop}[cf. {\cite{KovacsOther}}]\label{prop:charactrizeQBDS}
Let $A$ be a quasi-excellent $\bQ$-algebra.
Then the followings are equivalent.
\begin{enumerate}
    \item\label{QringBDS} $A$ is a birational derived splinter.
    
    \item\label{someResolRatlsplit} There exists a resolution of singularities $f:X\to \Spec(A)$ such that $A\to Rf_*\cO_X$ splits.
    
%    \item\label{someResolRatl} There exists a resolution of singularities $f:X\to \Spec(A)$ with $A=Rf_*\cO_X$.
    
    \item\label{allResolRatl} For any resolution of singularities $f:X\to \Spec(A)$, we have $A=Rf_*\cO_X$.
\end{enumerate}

If the equivalent conditions above hold, we say $A$ \emph{has rational singularities}, cf. \cite[Definition 2.76]{Kol13}.
%$ if and only if $A$ has rational singularities.
\end{Prop}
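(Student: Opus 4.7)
The strategy is to close the cycle $(1)\Rightarrow(2)\Rightarrow(3)\Rightarrow(1)$. Throughout I use that Noetherian quasi-excellent $\bQ$-schemes admit resolutions of singularities (Temkin), which applies to $\Spec(A)$ and to any proper birational model thereof. The implication $(1)\Rightarrow(2)$ is then immediate: a resolution is in particular a proper birational morphism, and one exists.

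For $(3)\Rightarrow(1)$, take any proper birational $g:Y\to\Spec(A)$. Since $Y$ is again Noetherian and quasi-excellent over $\bQ$, choose a resolution $h:Z\to Y$; then $gh:Z\to\Spec(A)$ is a resolution of $\Spec(A)$, so by (3) the natural map $A\to R(gh)_*\cO_Z$ is an isomorphism. Factoring this map through $Rg_*\cO_Y$ gives
\[ A \longrightarrow Rg_*\cO_Y \longrightarrow Rg_*Rh_*\cO_Z = R(gh)_*\cO_Z \cong A,\]
whose composition is the identity. Thus $A\to Rg_*\cO_Y$ is split, establishing (1) (reducedness of $A$ follows because, say under (3), $A = f_*\cO_X$ with $X$ regular).

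The substantive content is $(2)\Rightarrow(3)$, a version of Kov\'acs's characterization of rational singularities adapted to this setting. Given a splitting for some resolution $f:X\to\Spec(A)$, the first task is to show that $A\to Rf_*\cO_X$ is actually an isomorphism; the argument uses Grothendieck duality together with the Grauert--Riemenschneider type vanishing $R^if_*\omega_X=0$ for $i>0$, which in our generality is supplied by Murayama's extension of Kodaira-type vanishing to quasi-excellent $\bQ$-schemes. To pass from this particular $f$ to any other resolution $g:Y\to\Spec(A)$, take a common resolution $h:W\to\Spec(A)$ with maps $p:W\to X$ and $q:W\to Y$; since $X$, $Y$, $W$ are all regular, the classical fact that regular quasi-excellent $\bQ$-schemes have rational singularities (again via Murayama's vanishing) gives $Rp_*\cO_W=\cO_X$ and $Rq_*\cO_W=\cO_Y$. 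Chaining these identities yields $A=Rf_*\cO_X=Rh_*\cO_W=Rg_*\cO_Y$, as desired.

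The main obstacle is the Kov\'acs step itself: upgrading a derived-category splitting to a genuine isomorphism. This is the only place where real geometric input is needed, and without Murayama's vanishing one cannot make the argument in the abstract quasi-excellent setting. Once that step is in hand, the rest of the proof is formal, driven only by functoriality of $Rf_*$ and the existence of common resolutions.
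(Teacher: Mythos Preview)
Your cycle $(1)\Rightarrow(2)\Rightarrow(3)\Rightarrow(1)$ and your handling of the easy implications match the paper's. The organization of $(2)\Rightarrow(3)$ is a harmless variant: you first upgrade the given resolution to an isomorphism and then spread via a common resolution, while the paper first spreads the \emph{splitting} to all resolutions using \cite{CR15} and only then upgrades once via Grauert--Riemenschneider plus \cite[Cor.~2.75]{Kol13}. Either order works.

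There is, however, a genuine gap in your Kov\'acs step. The argument ``Grothendieck duality together with $R^if_*\omega_X=0$ for $i>0$'' presupposes that $\omega_X$ is defined, i.e.\ that $\Spec(A)$ carries a dualizing complex. A general quasi-excellent $\bQ$-algebra need not have one, so you cannot invoke duality (or \cite[Cor.~2.75]{Kol13}, which is stated under this hypothesis) directly. The paper handles this by a reduction: for each $\fp\in\Spec(A)$, pass to the completion $A'=(A_\fp)^\wedge$. Because $A_\fp\to A'$ is regular, $A'$ is normal and the base change of a resolution is again a resolution; splitting in $(2)$ survives flat base change, and the conclusion $(3)$ can be checked after flat base change and descent. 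In the complete local case $A'$ is excellent and has a dualizing complex \citestacks{0BFR}, so the duality/GR machinery applies. You should insert this reduction before running the Kov\'acs argument; without it the step ``$\omega_X$ exists'' is unjustified.
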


%Here, we call a quasi-excellent $\bQ$-algebra $A$ to have rational singularities if $A$ is normal and for all resolution of singularities $f:X\to \Spec(A)$, one has $R=Rf_*\cO_X$.
%Such resolutions exist by \cite{Tem12}.
\begin{proof}
In all cases $A$ is normal.
The existence of resolutions \cite[Theorem 1.2.1]{Tem12} 
shows that (\ref{allResolRatl}) implies (\ref{QringBDS}) and that (\ref{QringBDS}) implies (\ref{someResolRatlsplit}).
%Trivially (\ref{someResolRatl}) implies (\ref{someResolRatlsplit}),
It suffices to show (\ref{someResolRatlsplit}) implies (\ref{allResolRatl}).
%We shall show below that either one of the latter implies the former.
%We shall show (\ref{someResolRatl})
%as well as .

We make the following observation.
Let $\fp\in\Spec(A)$ and let $A'$ be the completion of $A_\fp$.
Then $A_\fp\to A'$ is regular by definition,
so $A'$ is normal
and if $f:X\to\Spec(A)$ is a resolution, then the base change $f':X':=X\times_{\Spec(A)}\Spec(A')\to \Spec(A')$
is also a resolution, see for example \citetwostacks{0C22}{033A}.
Since $\fp$ was arbitrary,
by flat base change (cf. \S\S\ref{subsec:Derived}) and descent,
we may always assume $A$ complete local.
In particular, $A$ has a dualizing complex \citestacks{0BFR} and is excellent.

Assume (\ref{someResolRatlsplit}).
\cite{CR15} and the existence of resolutions \cite[Theorem 1.2.1]{Tem12} show that for all resolutions $g:Y\to \Spec(A)$,
$A\to Rg_*\cO_Y$ splits.
Since we have the Grauert-Riemenschneider vanishing theorem \cite[Theorem A]{Takumi} in this case,
\cite[Corollary 2.75]{Kol13}
shows that $Rg_*\cO_Y=A$, as desired.
%(\ref{QringBDS}) implies (\ref{allResolRatl}).
%since we have the Grauert-Riemenschneider vanishing theorem in this case ($L=0$ in \cite[Theorem A]{Takumi}).
%Thus $A'=Rf'_*\cO_{X'}$, see \cite[Corollary 2.75]{Kol13}.
%Since a resolution of $A$ becomes a resolution of $A'$ after base change, and
%
\end{proof}

\begin{Rem}
Note that a quasi-excellent $\bQ$-algebra that has rational singularities is Cohen-Macaulay,
see \cite[Proposition 2.77]{Kol13}.
Thus by Lemma \ref{lem:LocalizeMDS} below and \cite[(34.A)]{Matsumura}
we see a G-ring that contains $\bQ$ and is a birational derived splinter is Cohen-Macaulay.

If we drop the excellence assumption completely, then the definition of rational singularities above may be vacuous, cf. \cite[Proposition 7.9.5]{EGA4_2}.
The author does not know, for example, if all Noetherian birational derived splinter containing $\bQ$ are Cohen-Macaulay.
\end{Rem}

\begin{Lem}\label{lem:ProdMDS}
Let $X$ be a set and $A_x\ (x\in X)$ a family of MDSs.
Then $A:=\prod_{x\in X}A_x$ is an MDS.
\end{Lem}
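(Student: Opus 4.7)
The plan is to reduce the splitting problem to the MDS property of each factor $A_x$ via the trace ideal formalism of \S\S\ref{subsec:Derived}, exploiting the fact that products commute with $\Hom$ in the second slot. Let $f: Y \to \Spec(A)$ be an M-morphism, where $A = \prod_x A_x$; the goal is to prove $\ft(f) = A$.

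First, since $A_x$ is a direct factor of $A$ corresponding to the idempotent $e_x = (\ldots, 0, 1, 0, \ldots) \in A$, the projection $A \to A_x$ coincides with the localization $A \to A[e_x^{-1}]$ and is therefore flat. Each $A_x$, and hence $A$, is reduced by Lemma \ref{lem:MDSnormal}. A nonzerodivisor of $A = \prod_x A_x$ is exactly a tuple $(a_y)_y$ whose every coordinate $a_y$ is a nonzerodivisor of $A_y$: if some $a_{y_0}$ admitted a nonzero annihilator $b \in A_{y_0}$, the tuple with $b$ in position $y_0$ and $0$ elsewhere would annihilate $(a_y)_y$. Hence the projection to $A_x$ preserves nonzerodivisors, and by Lemma \ref{lem:Mbasechange} the base change $f_x : Y_x \to \Spec(A_x)$ is an M-morphism. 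Since $A_x$ is an MDS, $\ft(f_x) = A_x$ for every $x$.

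Second, combining the decomposition of $A$ as the $A$-module $\prod_x A_x$, the preservation of products by $\Hom_{D(A)}(Rf_*\cO_Y, -)$, the derived base change adjunction, and flat base change for qcqs morphisms \citestacks{08IB}, I obtain a natural identification
\[ \Hom_{D(A)}(Rf_*\cO_Y, A) \cong \prod_x \Hom_{D(A_x)}(R(f_x)_*\cO_{Y_x}, A_x). \]
A direct diagram chase, using that the canonical composition $A \to Rf_*\cO_Y \to R(f_x)_*\cO_{Y_x}$ factors as $A \to A_x \xrightarrow{u_{f_x}} R(f_x)_*\cO_{Y_x}$, shows that under this identification the trace map becomes the coordinate-wise trace $(\psi_x)_x \mapsto (\operatorname{trace}(\psi_x))_x$. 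Taking images then gives
\[ \ft(f) = \prod_x \ft(f_x) = \prod_x A_x = A, \]
so that the canonical map $A \to Rf_*\cO_Y$ splits, as desired.

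The main point requiring attention is the compatibility of the trace map with the coordinate-wise decomposition. It is worth noting that the weaker inclusion $\ft(f) \cdot A_x \subseteq \ft(f_x)$ coming from flat base change alone would not suffice, as pseudo-coherence of $Rf_*\cO_Y$ is not available in the non-Noetherian setting; rather, it is the product-preservation of $\Hom$ together with the expression of $A$ itself as the product $\prod_x A_x$ that allows the factor-wise units $1 \in A_x$ to be assembled into the global unit $1 \in A$.
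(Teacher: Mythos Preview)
Your proof is correct and follows essentially the same approach as the paper: reduce to the factor-wise MDS property via flat base change along the projections $A\to A_x$ and Lemma~\ref{lem:Mbasechange}, then assemble the factor-wise splittings into a global one. The only difference is packaging---you phrase the assembly step as a product decomposition of $\Hom_{D(A)}(Rf_*\cO_Y,A)$ and hence of the trace ideal, whereas the paper writes it as the factorization $A\to Rg_*\cO_Y\to\prod_x Rg_{x*}\cO_{Y_x}$ together with the product of the individual splittings; these are the same map viewed from opposite ends.
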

\begin{proof}
We note first that a product of reduced rings is reduced, so by Lemma \ref{lem:MDSnormal} we see $A$ and all $A_x$ reduced.

Let $g:Y\to \Spec(A)$ be an M-morphism and $g_x:Y_x\to \Spec(A_x)$ be the base change of $g$ along the projection map.
Note that a nonzerodivisor of $A$ is just an element all of whose coordinates are nonzerodivisors.
Thus by Lemma \ref{lem:Mbasechange}, $g_x$ is an M-morphism and thus $A_x\to Rg_{x*}\cO_{Y_x}$ splits in $D(A_x)$.
Taking product, we see that
$A\to \prod_x Rg_{x*}\cO_{Y_x}$ splits in $D(A)$.
Since $A\to A_x$ is flat, $Rg_{x*}\cO_{Y_x}=Rg_*\cO_Y\otimes^L_A A_x$,
so $A\to \prod_x Rg_{x*}\cO_{Y_x}$  factors through $A\to Rg_*\cO_Y$ and thus the latter map splits.
\end{proof}
%\noindent\emph{Remark}.

\begin{Rem}
One can do the proof in terms of explicit complexes if not comfortable with infinite products in the derived category.
Cover $Y$ by affine opens $U^{(j)}\ (1\leq j\leq n)$.
Then we have the \v{C}ech complex
$\check{C}^\bullet=\check{C}^\bullet(\cU,\cO)$ that computes $Rg_*\cO_Y$,
and we see that the complex $\check{C}^\bullet_x:=\check{C}^\bullet\otimes_A A_x$ is a \v{C}ech complex that computes $Rg_{x*}\cO_{Y_x}$.
Denote by $\varphi_x$ the canonical map $A_x\to \check{C}^\bullet_x$,
so $\varphi_x$ splits in $D(A_x)$,
which means that there exists an $A_x$-complex $F_x^\bullet$ together with a map $\psi_x:F_x^\bullet\to A_x$ and a quasi-isomorphism $s_x:F_x^\bullet\to \check{C}^\bullet_x$
such that $H^0(\psi_x)\circ H^0(s_x)^{-1}\circ H^0(\varphi_x)=\mathrm{id}_{A_x}$.
%and that there exists a cohomology class $\alpha_x\in H^0(F_x^\bullet)$
%such that $H^0(\varphi_x\circ\psi_x)(\alpha_x)=H^0(\varphi_x)(1)\in H^0(\check{C}^\bullet\otimes_A A_x)$.
Taking product of these data we get an $A$-complex $F^\bullet=\prod_x F_x^\bullet$, a map $\psi=\prod_x \psi_x$ and a quasi-isomorphism $s=\prod_x s_x$
%a cohomology class $\alpha=(\alpha_x)_{x\in X}$
satisfying the same conditions and we see
$\prod_x A_x\to\prod_x\check{C}^\bullet_x$
splits in $D(A)$.
Finally, we have the canonical factorization
$A=\prod_x A_x\to \check{C}^\bullet\to\prod_x\check{C}^\bullet_x$
in the category of cochain complexes of $A$-modules.
Thus $A\to \check{C}^\bullet$ splits in $D(A)$ as desired.
\end{Rem}

\begin{Lem}\label{lem:LocalizeMDS}
Let $A$ be an MDS with total fraction ring $K$.
Assume that $\dim K=0$.
Then any localization $S^{-1}A$ is an MDS.
\end{Lem}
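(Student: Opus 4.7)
The plan is to reduce the assertion about $B := S^{-1}A$ to the known splitting property of $A$ by \emph{lifting} M-morphisms along the localization and then \emph{descending} the splitting back via flat base change. Both halves are already packaged as results in \S\ref{sec:Prep}.

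First, I would invoke Lemma \ref{lem:MDSnormal} to note that an MDS is automatically reduced and integrally closed in its total fraction ring. Combined with the standing hypothesis $\dim K = 0$, this means $A$ satisfies exactly the list of assumptions needed to apply Lemma \ref{lem:LocalizeM}: namely $A$ is reduced, integrally closed in $K$, and $\dim K = 0$.

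Now let $g : Y \to \Spec(B)$ be an arbitrary M-morphism; we want to show the canonical map $B \to Rg_*\cO_Y$ splits in $D(B)$. By Lemma \ref{lem:LocalizeM} applied to the multiplicative subset $S$, there exists an M-morphism $f : X \to \Spec(A)$ with $X \times_{\Spec(A)} \Spec(B) \cong Y$, so that $g$ is the base change of $f$ along the flat map $A \to B$. Since $A$ is an MDS, the map $A \to Rf_*\cO_X$ splits in $D(A)$. As recalled in \S\S\ref{subsec:Derived}, the formation of $Rf_*\cO_X$ commutes with flat base change, so tensoring the splitting with $B$ along $A \to B$ identifies $Rf_*\cO_X \otimes^L_A B$ with $Rg_*\cO_Y$ and yields a splitting of the canonical map $B \to Rg_*\cO_Y$. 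Hence $B$ is an MDS, as desired.

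In short, the whole proof is essentially the composition of two prior results (lifting M-morphisms to $\Spec(A)$ via Lemma \ref{lem:LocalizeM}, and descending splits via flat base change), and there is no serious obstacle — the only thing one must verify is that Lemma \ref{lem:LocalizeM}'s hypotheses are met, which follows from Lemma \ref{lem:MDSnormal} plus the standing assumption $\dim K = 0$. This last assumption is precisely what is needed to ensure the localized total fraction ring is still zero-dimensional, so that the lifting procedure works.
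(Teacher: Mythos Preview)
Your proposal is correct and follows essentially the same route as the paper's own proof: invoke Lemma~\ref{lem:MDSnormal} to verify the hypotheses of Lemma~\ref{lem:LocalizeM}, lift an arbitrary M-morphism over $\Spec(S^{-1}A)$ to one over $\Spec(A)$, use the MDS property of $A$ to split, and descend via flat base change as in \S\S\ref{subsec:Derived}. There is nothing to add.
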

\begin{proof}
%Reducedness localizes, so w
We need to find a splitting $S^{-1}A\to Rg_*\cO_Y$ for every M-morphism $g:Y\to\Spec(S^{-1}A)$.
By Lemma \ref{lem:MDSnormal} $A$ is reduced and integrally closed in $K$, so Lemma \ref{lem:LocalizeM} applies and shows that $g$ extends to an M-morphism $f:X\to \Spec(A)$, and $A\to Rf_*\cO_X$ splits.
By flat base change, see \S\S\ref{subsec:Derived}, $S^{-1}A\to Rg_*\cO_Y$ splits as desired.
%$S^{-1}A$ is integrally closed in $S^{-1}K$ \citestacks{0307}.
%Since $\dim K=0$, $\dim(S^{-1}K)=0$, so $S^{-1}K$ is the total fraction ring of $S^{-1}A$
%Immediate from ,
%which we can apply by assumptions and Lemma \ref{lem:MDSnormal}.
\end{proof}

Recall that a ring map $A\to B$ is \emph{pure} if it is universally injective as a map of $A$-modules.
See \citestacks{058H}.
The fact that $B$ is not necessarily assumed Noetherian in the following result is essential later on.
%For a Noetherian version see Lemma \ref{cor:NoetherianPureDescendMDS}.

\begin{Prop}\label{prop:PureDescendMDS}
Let %$A$ be a Noetherian ring,
$A\to B$ be a pure ring map
that sends nonzerodivisors to nonzerodivisors (for example, a faithfully flat ring map).
Assume that $A$ is Noetherian and that $B$ is an MDS.
Then $A$ is a birational derived splinter.
\end{Prop}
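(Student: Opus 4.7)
My plan is to reduce the derived splitting of $A\to Rf_*\cO_X$ to the splitting of an ordinary map of $A$-modules, and then carry out a pure descent. The key reduction is the following. Since $f$ is proper and $A$ is Noetherian, $C:=Rf_*\cO_X$ lies in $D^b(A)\cap D^{\geq 0}(A)$. The truncation triangle $H^0(C)[0]\to C\to \tau_{\geq 1}C$, together with the $t$-structure bound $R\Hom_A(\tau_{\geq 1}C,A)\in D^{\leq -1}(A)$ (because $A\in D^{\leq 0}(A)$ and $\tau_{\geq 1}C\in D^{\geq 1}(A)$), yields a canonical isomorphism $\Hom_{D(A)}(C,A)\cong \Hom_A(H^0(C),A)$ compatible with precomposition by the canonical map $A\to C$. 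Hence $A\to C$ splits in $D(A)$ precisely when $A\to A':=\Gamma(X,\cO_X)$ splits as an $A$-module map; the analogous equivalence holds over any base.

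Given an M-morphism $f:X\to\Spec A$, Lemma \ref{lem:Mbasechange} gives the M-morphism $f_B:X_B\to\Spec B$. As $B$ is an MDS, the reduction above produces a $B$-linear retraction $\sigma:B'\to B$ of the canonical map $B\to B':=\Gamma(X_B,\cO_{X_B})$. Composing $\sigma$ with the canonical $B$-algebra base-change map $A'\otimes_A B\to B'$ gives a $B$-linear $\tilde\tau:A'\otimes_A B\to B$; compatibility of the structure maps implies $\tilde\tau$ retracts $B\to A'\otimes_A B$, so $B\to A'\otimes_A B$ splits as $B$-modules and is in particular a pure ring map.

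Composing the pure maps $A\to B\to A'\otimes_A B$ shows $A\to A'\otimes_A B$ is pure, while base-changing $A\to B$ along $A\to A'$ shows $A'\to A'\otimes_A B$ is pure. For any $A$-module $M$, applying $M\otimes_A-$ to $A\to A'\to A'\otimes_A B$ produces a sequence in which both the composite and the second arrow are injective, so $M\to M\otimes_A A'$ is injective for every $M$; that is, $A\to A'$ is pure. Since $f$ is proper and $A$ Noetherian, $A'$ is a finitely generated $A$-module, so $A'/A$ is finitely presented, and the standard fact that a pure embedding of $A$-modules with finitely presented cokernel is a direct summand produces a retraction $A'\to A$. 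By the first paragraph, this lifts to a splitting of $A\to Rf_*\cO_X$ in $D(A)$, so $A$ is an MDS and, being Noetherian, a birational derived splinter by Lemma \ref{lem:characterizeNoetherianMDS}.

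I expect the main hurdle to be the initial $t$-structure reduction: once the derived splitting is identified with the splitting of the ordinary $A$-module map $A\to A'$, the hypothesis that $A\to B$ is merely pure rather than faithfully flat causes no trouble, because the remaining argument is a routine purity manipulation on finitely presented modules.
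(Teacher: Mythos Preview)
Your reduction in the first paragraph is incorrect, and the whole argument collapses there. The assertion that $R\Hom_A(\tau_{\geq 1}C,A)\in D^{\leq -1}(A)$ does \emph{not} follow from $\tau_{\geq 1}C\in D^{\geq 1}(A)$ and $A\in D^{\leq 0}(A)$: the $t$-structure axiom gives $\Hom(D^{\leq 0},D^{\geq 1})=0$, not the other direction. Concretely, if $\tau_{\geq 1}C$ has a single nonzero cohomology module $M$ in degree $1$, then $H^0\bigl(R\Hom_A(M[-1],A)\bigr)=\operatorname{Ext}^1_A(M,A)$, which is typically nonzero. So the map $\Hom_{D(A)}(C,A)\to\Hom_A(H^0(C),A)$ need not be an isomorphism, and splitting of $A\to A'=\Gamma(X,\cO_X)$ does not imply splitting of $A\to Rf_*\cO_X$.

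A decisive counterexample: take any Noetherian normal domain $A$ that is \emph{not} a birational derived splinter (say the local ring at the vertex of the cone over an elliptic curve in characteristic zero), and let $f:X\to\Spec(A)$ be a resolution of singularities. Normality forces $f_*\cO_X=A$, so $A\to A'$ is the identity and certainly splits; yet $A\to Rf_*\cO_X$ does not split. Thus the two splitting problems are genuinely different, and your purity argument on $A\to A'$ (which is fine on its own terms) proves nothing about the derived splitting.

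The paper avoids this obstruction by never reducing to $H^0$. Instead it completes $A$ at each maximal ideal: over a Noetherian complete local ring $A'$, a pure map $A'\to B'$ actually \emph{splits} as $A'$-modules (Fedder's lemma \cite[Lemma~1.2]{Fed83}). This lets one pull back the full derived splitting $B'\to Rf_{T*}'\cO_{X_T'}$ along the $A'$-linear retraction $B'\to A'$, obtaining a splitting of $A'\to Rf_*'\cO_{X'}$ directly in $D(A')$. One then globalizes via the trace ideal $\ft(f)$, whose formation commutes with the flat base change $A\to A'$.
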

\begin{proof}
Since $B$ is an MDS, it is reduced by Lemma \ref{lem:MDSnormal}, so $A$ is reduced as a pure ring map is injective.
%We need to show that $A$ is an MDS,
%see Lemma \ref{lem:characterizeNoetherianMDS}.

Let $f:X\to S=\Spec(A)$ be an M-morphism,
and $T=\Spec(B)$.
The base change $f_T:X_T\to T$ is also an M-morphism, see Lemma \ref{lem:Mbasechange}.
%, which we can apply by assumptions and by the fact $A$ is reduced.
Since $B$ is an MDS, $B\to Rf_{T*}\cO_{X_T}$ splits in $D(B)$.

Let $\fm$ be a maximal ideal of $A$ and let $A'$ be the completion of $A$ at $\fm$.
%Since the formation of $Rf_{T*}\cO_{X_T}$ commutes with flat base change,
By flat base change, see \S\S\ref{subsec:Derived},
$B'\to Rf'_{T*}\cO_{X'_T}$ splits in $D(B')$,
where $(-)'$ is the base change along $A\to A'$.
We also note that $A'\to B'$ is pure, and that $A'$ is a Noetherian complete local ring, so $A'\to B'$ splits as a map of $A'$-modules by \cite[Lemma 1.2]{Fed83}.
Therefore $A'\to Rf'_{T*}\cO_{X'_T}$ splits in $D(A')$, and since this map factors through $A'\to Rf'_{*}\cO_{X'}$,
we see that $A'\to Rf'_{*}\cO_{X'}$ splits in $D(A')$ as well.

Using the observations made in \S\S\ref{subsec:Derived}, we see that the ideal $\ft(f)$ satisfies $\ft(f)A'=A'$.
Since $\fm$ was arbitrary, we see $\ft(f)=A$,
as desired.
\end{proof}

\begin{Cor}\label{cor:MDSisLocal}
Let $A$ be a Noetherian ring.
The followings are equivalent.

(1) $A$ is a birational derived splinter.

(2) $A_\fp$ is a birational derived splinter for all prime ideals $\fp$ of $A$.

(3) $A_\fm$ is a birational derived splinter for all maximal ideals $\fm$ of $A$.
\end{Cor}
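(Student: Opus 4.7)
The plan is to go around the cycle (1) $\Rightarrow$ (2) $\Rightarrow$ (3) $\Rightarrow$ (1), with (2) $\Rightarrow$ (3) being trivial, (1) $\Rightarrow$ (2) following from Lemma \ref{lem:LocalizeMDS} after checking its hypotheses, and (3) $\Rightarrow$ (1) reduced to Proposition \ref{prop:PureDescendMDS} via the faithfully flat cover $A\to\prod_{\fm}A_\fm$.

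For (1) $\Rightarrow$ (2), first invoke Lemma \ref{lem:characterizeNoetherianMDS} to see $A$ is a Noetherian MDS and in particular normal. Since $A$ is Noetherian reduced it has finitely many minimal primes, so by Lemma \ref{lem:TotalFractionProduct} (applied to the one-element family) the total fraction ring $K$ of $A$ is a finite product of the residue fields at the minimal primes, hence $\dim K=0$. Thus the hypotheses of Lemma \ref{lem:LocalizeMDS} are satisfied, so every localization $A_\fp$ is an MDS. Being Noetherian, $A_\fp$ is then a birational derived splinter by Lemma \ref{lem:characterizeNoetherianMDS} once again.

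For (3) $\Rightarrow$ (1), set $B:=\prod_{\fm}A_\fm$ indexed by the maximal ideals of $A$. The map $A\to B$ is faithfully flat (hence sends nonzerodivisors to nonzerodivisors). Each factor $A_\fm$ is a Noetherian birational derived splinter, hence an MDS by Lemma \ref{lem:characterizeNoetherianMDS}, and then $B$ is an MDS by Lemma \ref{lem:ProdMDS}. Proposition \ref{prop:PureDescendMDS} then directly yields that $A$ is a Noetherian birational derived splinter.

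Neither direction poses a serious obstacle: the only nontrivial points are the verification that the total fraction ring of a Noetherian normal ring has dimension zero (so that Lemma \ref{lem:LocalizeMDS} applies) and the recognition that the product construction $\prod_\fm A_\fm$ fits precisely into the framework of Lemma \ref{lem:ProdMDS} and Proposition \ref{prop:PureDescendMDS} already set up in this section. Everything else is bookkeeping, so a short direct write-up should suffice.
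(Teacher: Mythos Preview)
Your proposal is correct and follows essentially the same route as the paper: (1) $\Rightarrow$ (2) via Lemma \ref{lem:LocalizeMDS}, and (3) $\Rightarrow$ (1) via the faithfully flat map $A\to\prod_\fm A_\fm$ combined with Lemma \ref{lem:ProdMDS} and Proposition \ref{prop:PureDescendMDS}. You are in fact slightly more careful than the paper in explicitly verifying the hypothesis $\dim K=0$ needed for Lemma \ref{lem:LocalizeMDS}, which the paper leaves implicit.
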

\begin{proof}
We have (1)$\Longrightarrow$(2) by Lemma \ref{lem:LocalizeMDS}, and (2)$\Longrightarrow$(3) is trivial.

The implication (3)$\Longrightarrow$(1) follows from a similar but much simpler argument as in the proof of Proposition \ref{prop:PureDescendMDS}, but we can also obtain the result directly as follows.
For a Noetherian ring $A$, the map $A\to \prod_{\fm} A_{\fm}$, where $\fm$ runs over all maximal ideals of $A$, is faithfully flat, cf. \citetwostacks{05CZ}{05CY}.
Our result now follows from Lemma \ref{lem:ProdMDS} and Proposition \ref{prop:PureDescendMDS}.
\end{proof}

In the Noetherian case,
we also have the following strengthening of Proposition \ref{prop:PureDescendMDS}.
Together with Proposition \ref{prop:charactrizeQBDS} and \cite[Lemma 2.2]{HH95},
we recover \cite[Theorem C]{Takumi} (note that our definition is slightly different from \cite[Definition 7.2]{Takumi}).
\begin{Cor}\label{cor:NoetherianPureDescendMDS}
Let $A\to B$ be a cyclically pure ring map, that is, for all ideals $I\subseteq A$ we have $IB\cap A=I$.
If $B$ is a Noetherian birational derived splinter, so is $A$,
and $A\to B$ is pure.
\end{Cor}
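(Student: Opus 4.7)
The plan is to verify the hypotheses of Proposition \ref{prop:PureDescendMDS}, namely that $A$ is Noetherian and that $A \to B$ is pure. Preservation of nonzerodivisors then comes for free, since purity is universal injectivity: for a nonzerodivisor $a \in A$, $A \xrightarrow{a} A$ is injective, so $B \xrightarrow{a} B$ is injective. Note first that taking $I = 0$ in the cyclic purity condition gives $A \hookrightarrow B$, and since $B$ is reduced by Lemma \ref{lem:MDSnormal}, $A$ is reduced too.

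Noetherianity follows from the standard chain argument: any ascending chain $I_1 \subseteq I_2 \subseteq \cdots$ in $A$ extends to a chain in the Noetherian ring $B$ which must stabilize, and cyclic purity $I_n = I_n B \cap A$ pulls the stabilization back.

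The key step is upgrading cyclic purity to full purity. I would work at each maximal ideal $\fm$ of $A$, setting $A' := \widehat{A_\fm}$ and $B' := A' \otimes_A B$. First I would show $A' \to B'$ is cyclically pure: for any ideal $J \subseteq A'$, Krull intersection gives $J = \bigcap_n (J + \fm^n A')$, and each $J + \fm^n A'$ has the form $I_n A'$ for some $I_n \subseteq A$ (since $A'/\fm^n A' \cong A/\fm^n$); flat base change along $A \to A'$ of cyclic purity of $A \to B$ then gives $(I_n A') B' \cap A' = I_n A'$, and intersecting over $n$ yields $JB' \cap A' = J$. Next, since $A_\fm$ is Noetherian local and reduced, it has depth $\geq 1$ (the case $\dim A_\fm = 0$ is trivial, as $A_\fm$ is then a field), so $A'$ also has depth $\geq 1$ and is approximately Gorenstein by Hochster's theorem; this upgrades cyclic purity of $A' \to B'$ to purity. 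To descend, for a finitely generated $A$-module $M$ (which suffices for Noetherian $A$) and any $x \in \ker(M \to M \otimes_A B)$, the injection $M \hookrightarrow \prod_\fm \widehat{M_\fm}$ together with purity of $A' \to B'$ applied to $\widehat{M_\fm}$ forces $x = 0$. Thus $A \to B$ is pure, and Proposition \ref{prop:PureDescendMDS} concludes that $A$ is a Noetherian birational derived splinter.

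The main obstacle is this cyclic-to-full purity upgrade; the Krull-intersection device, together with the fact that reducedness of $A$ forces positive depth on completions (depth being preserved under $\fm$-adic completion), makes Hochster's theorem applicable without any excellence hypothesis on $A$.
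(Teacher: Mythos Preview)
Your proposal has two genuine gaps.

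\textbf{First}, purity does not force nonzerodivisors to nonzerodivisors. Purity of $A\to B$ means $M\to M\otimes_A B$ is injective for every $A$-module $M$; it does \emph{not} say that $-\otimes_A B$ preserves injections---that would be flatness. Concretely, take $A=k[x]\to B=k[x]\times k$, $a\mapsto(a,a(0))$: this is pure (even split, via the first projection), and $B$ is regular hence a birational derived splinter, yet $x$ becomes a zerodivisor in $B$. So you cannot apply Proposition~\ref{prop:PureDescendMDS} to $A\to B$ directly. The paper circumvents this by localizing: once $A$ is known to be normal and $A\to B$ pure, for each $\fp\in\Spec(A)$ one uses \cite[Lemma~2.2]{HH95} to find $\fq\in\Spec(B)$ with $A_\fp\to B_\fq$ pure; both rings are then normal local domains, so the nonzerodivisor hypothesis of Proposition~\ref{prop:PureDescendMDS} is automatic, and one concludes via Corollary~\ref{cor:MDSisLocal}.

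\textbf{Second}, depth $\geq 1$ does not imply approximately Gorenstein. Hochster's criterion for a complete local ring requires the absence of embedded primes (together with generic Gorensteinness on the one-dimensional components); depth $\geq 2$ suffices, but depth $1$ alone does not. For instance $k[[t,x,y]]/(t^{2},tx)$ is complete local of depth $1$ (the element $y$ is regular) yet has the embedded prime $(t,x)$, so it fails to be approximately Gorenstein. In your setup you only know $A_\fm$ is reduced; without any excellence hypothesis on $A$ the completion $A'=\widehat{A_\fm}$ may well acquire embedded primes, and your invocation of Hochster's theorem breaks down. The paper fills this gap by exploiting the \emph{normality} of $B$: via \cite[Corollary~3.12]{Has10} (which ultimately rests on \cite{Hoc77}), $B$ Noetherian normal together with cyclic purity of $A\to B$ already yields that $A$ is Noetherian normal and $A\to B$ is pure. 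Normality gives $(S_2)$, which is exactly the depth input Hochster's argument needs.
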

\begin{proof}
From Lemma \ref{lem:characterizeNoetherianMDS} we see $B$ normal.
By \cite[Corollary 3.12]{Has10} (which ultimately comes from \cite{Hoc77}), $A$ is a Noetherian normal ring and $A\to B$ is pure.

For every $\fp\in\Spec(A)$, $A_\fp\to B_\fp$ is pure,
so \cite[Lemma 2.2]{HH95} shows that
there exists $\fq\in \Spec(B)$ such that $\fq\cap A\subseteq \fp$ and
$A_\fp\to B_\fq$ is pure.
Now $A_\fp$ and $B_\fq$ are integral domains, so $A_\fp\to B_\fq$
sends nonzerodivisors to nonzerodivisors.
By Proposition \ref{prop:PureDescendMDS} (and Lemma \ref{lem:LocalizeMDS}), $A_\fp$ is a birational derived splinter.
Since this holds for all $\fp\in\Spec(A)$ we see $A$ is a birational derived splinter from Corollary \ref{cor:MDSisLocal}.
\end{proof}

%%The limit result above has a quite restrictive flatness assumption. % that the transition maps are flat.
%This is indeed the case, for example, for the
%\'etale algebras occuring in the Henselization of a local domain (cf. the proof of Corollary \ref{cor:IndEtMDS}).
%Below we establish another, rather artificial, limit result, for the purpose of reducing Theorem \ref{thm:SmCharp} to the case $A$ is smooth over $S$.
%However, in the Noetherian case we have the following satisfying limit result:
%
%Finally, let us prove our result about direct limit of birational derived splinters.
%The proof is short, but the reader should be aware that hard work has already been done in \S\S\ref{subsec:PerfPsCoh} and \ref{subsec:limit}.
%
\section{Direct limit}\label{sec:lim}

In this section we consider the direct limit of a system of birational derived splinters.
We need to do some work on derived categories and direct limit.

\textsc{Convention}:
by a complex of modules over a ring $A$ we mean a cochain complex in the abelian category of $A$-modules.
We usually denote it by $\square^\bullet$ where $\square$ is a capital Latin letter,
in which case $\square$ denotes the corresponding object in the derived category $D(A)$.

\subsection{Splitting in the derived category: from perfect to pseudo-coherent}\label{subsec:PerfPsCoh}

In this subsection we establish the following result.
This is inspired by a notion of purity in general categories, cf. \cite[\S 2.D]{AR}.

\begin{Lem}\label{lem:PerfToPseudoCoh}
Let $R$ be a ring, $E$ a bounded object in $D(R)$, $F$ a pseudo-coherent object in $D(R)$ \citestacks{064Q}.

Let $\varphi:E\to F$ be a map in $D(R)$.
If for all perfect objects \citestacks{0657} $K\in D(R)$ and all factorizations $E\to K\to F$ of $\varphi$ the map $E\to K$ is a split mono in $D(R)$,
then $\varphi$ is a split mono in $D(R)$.
\end{Lem}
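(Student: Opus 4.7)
The plan is to approximate $F$ by a perfect complex via a morphism $\beta \colon K \to F$ whose cone lies in cohomological degrees sufficiently below those of $E$, to factor $\varphi$ through $\beta$ so the hypothesis applies, and then to lift the resulting retraction $K \to E$ to a retraction $F \to E$.

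First I would fix an integer $a \geq 0$ with $E \in D^{[-a,a]}(R)$ and, since $F$ is in particular $(-a-1)$-pseudo-coherent (\citestacks{064Q}), choose a perfect $K \in D(R)$ together with a morphism $\beta \colon K \to F$ such that $C := \operatorname{cone}(\beta) \in D^{\leq -a-2}(R)$. The standard fact that $\Hom_{D(R)}(X,Y) = 0$ whenever $X \in D^{\geq p}(R)$, $Y \in D^{\leq q}(R)$, and $p > q$ then yields both
\[ \Hom_{D(R)}(E, C) = 0 \quad \text{and} \quad \Hom_{D(R)}(C[-1], E) = 0, \]
using $E \in D^{\geq -a}(R)$ and $C[-1] \in D^{\leq -a-1}(R)$.

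Next I would apply $\Hom_{D(R)}(E, -)$ to the distinguished triangle $K \xrightarrow{\beta} F \to C \to K[1]$. The first vanishing makes the map $\Hom_{D(R)}(E, K) \to \Hom_{D(R)}(E, F)$ surjective, producing $\alpha \colon E \to K$ with $\beta \circ \alpha = \varphi$; this is a factorization of $\varphi$ through the perfect object $K$. The hypothesis then supplies a retraction $r \colon K \to E$ with $r \circ \alpha = \operatorname{id}_E$.

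Finally I would apply $\Hom_{D(R)}(-, E)$ to the same triangle. The second vanishing makes $\Hom_{D(R)}(F, E) \to \Hom_{D(R)}(K, E)$ surjective, so $r$ lifts to some $s \colon F \to E$ with $s \circ \beta = r$; then $s \circ \varphi = s \circ \beta \circ \alpha = r \circ \alpha = \operatorname{id}_E$, showing $\varphi$ is a split mono. There is no real obstacle once the approximation is in place; the whole proof is bookkeeping on long exact sequences plus degree vanishing. The one point to watch is choosing $C$ to sit in cohomological degrees low enough that both Hom-vanishings hold simultaneously, which is precisely where the two-sided boundedness of $E$ is used.
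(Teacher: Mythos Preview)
Your argument has a genuine gap: the ``standard fact'' you invoke is false as stated. The $t$-structure axiom reads $\Hom_{D(R)}(D^{\leq q},D^{\geq p})=0$ for $p>q$, not $\Hom_{D(R)}(D^{\geq p},D^{\leq q})=0$. For a counterexample to your version, over $R=\bZ$ take $X=\bZ/2\in D^{\geq 0}$ and $Y=\bZ[1]\in D^{\leq -1}$; then $\Hom_{D(\bZ)}(\bZ/2,\bZ[1])\cong\operatorname{Ext}^1_{\bZ}(\bZ/2,\bZ)\cong\bZ/2\neq 0$. With the correct orientation your second vanishing $\Hom(C[-1],E)=0$ survives (since $C[-1]\in D^{\leq -a-1}$ and $E\in D^{\geq -a}$), but your first vanishing $\Hom(E,C)=0$ does not, and it can genuinely fail in the situation at hand: over $R=k[x]/(x^2)$ with $E=k$ and $C=k[n]$ one has $\Hom_{D(R)}(E,C)\cong\operatorname{Ext}^n_R(k,k)\neq 0$ for every $n\geq 0$, even though $C$ is pseudo-coherent and lies arbitrarily far to the left. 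So the surjectivity of $\Hom(E,K)\to\Hom(E,F)$ is unjustified, and your factorization $E\to K\to F$ is not yet obtained.

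The paper produces this factorization at the level of complexes rather than via a $\Hom$-vanishing: it represents $F$ by a bounded-above complex $F^\bullet$ of finite free modules, represents $\varphi$ by an honest chain map $\varphi^\bullet:E^\bullet\to F^\bullet$ with $E^\bullet$ bounded (say $E^i=0$ for $i<a$), and takes $K^\bullet=\sigma_{\geq a-1}F^\bullet$, so that $\varphi^\bullet$ factors through $K^\bullet$ for trivial degree reasons. Once that factorization is in place, your second step---lifting the retraction $r:K\to E$ to $s:F\to E$ via the (correct) vanishing $\Hom(C[-1],E)=0$---is valid and is in fact cleaner than the paper's hands-on extension of the splitting across an enlarged complex $F'^\bullet$. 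So your strategy is sound; only the first step needs to be carried out honestly, and that appears to require touching actual complexes.
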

\begin{proof}
By definition, we may represent the object $F$ by a complex $F^\bullet$ of finite free $R$-modules such that $F^i=0$ for $i$ outside an interval $(-\infty,b]\ (b\in \bZ)$.
Choose any complex $E^\bullet$ that represents $E$ together with a map of complexes $\varphi^\bullet:E^\bullet\to F^\bullet$ that represent $\varphi$.
Since $E$ is bounded, we may assume that $E^i$ are zero for $i$ outside an interval $[a,b']\ (a,b'\in \bZ,a\leq b')$.
%;
%see the proof of \citestacks{0658}.
%Now the projectivity of each $E^i$ implies that $\varphi$ is represented by a map $\varphi^\bullet:E^\bullet\to F^\bullet$ of complexes.

Next, let $K^\bullet$ be the complex defined by $K^i=F^i\ (i\geq a-1)$ and $K^i=0\ (i<a-1)$.
(This is the stupid truncation $\sigma_{\geq a-1}F^\bullet$, see \citestacks{0118}.)
Then $K^\bullet$ is a perfect complex of $\operatorname{Tor}$-amplitude $[a-1,b]$ that canonically maps to $F^\bullet$,
and the map $\varphi^\bullet$ factors as
$E^\bullet\xrightarrow{\psi^\bullet}K^\bullet\to F^\bullet$.

Now, by assumption, the map $\psi:E\to K$ is a split mono in $D(R)$.
This means that there exists a quasi-isomorphism of complexes $s^\bullet:K'^\bullet\to K^\bullet$
and a map of complexes $\psi'^\bullet:K'^\bullet\to E^\bullet$
such that $\psi'\circ s^{-1}\circ \psi=\mathrm{id}_E$ in $D(R)$.
If $s'^\bullet:K''^\bullet\to K'^\bullet$ is a quasi-isomorphism of complexes, then we may replace $K',s,\psi'$ by $K'',s\circ s',\psi'\circ s'$.
Therefore %again %by the proof of \citestacks{0658}
we may assume $K'^i$ are zero for $i\notin [a-1,b']$.

Now, the composition $
\theta:K'^\bullet\xrightarrow{s}K^\bullet\to F^\bullet$ is bijective on $H^i\ (i\geq a)$ and surjective on $H^{a-1}$.
Thus one can find a complex $F'^\bullet$ with $F'^i=K^i$ when $i\geq a-1$ and a quasi-isomorphism $s'^\bullet:F'^\bullet\to F^\bullet$ that agrees with $\theta$ on degrees $\geq a-1$.
This is by the same process as that to find a projective resolution of a complex, see for example the proof of \citestacks{05T7}.
Since $E^i=0$ for $i<a$ and $F'^i=K'^i$ for $i\geq a-1$, $\psi'^\bullet:K'^\bullet\to E^\bullet$ extends canonically to a map $\psi''^\bullet:F'^\bullet\to E^\bullet$ of complexes by putting $\psi''^{i}=0$ when $i<a-1$.
Since $F$ and $K$ also only differ in degrees $<a-1$, $\psi''\circ s'^{-1}\circ\varphi=\psi'\circ s^{-1}\circ \psi=\mathrm{id}_E$, as desired.
\end{proof}

\subsection{Splitting in the derived category: limits}\label{subsec:limit}

In this subsection fix a direct system $(R_i)_{i\in\Lambda}$ of rings with colimit $R$.

Let $C_i^\bullet$ be complexes of $R_i$-modules equipped with compatible maps of $R_i$-complexes $C_i^\bullet\to C_j^\bullet$ for $i\leq j$.
Then the termwise colimit $C^\bullet=\colim_i C_i^\bullet$ is canonically a complex of $R$-modules and $H^n(C^\bullet)=\colim_i H^n(C_i^\bullet)$ for all $n\in \bZ$,
since filtered colimit is exact.
%commutes with taking kernel and image.

Let $K^\bullet$ be a complex of finitely presented $R$-modules with finitely many nonzero terms.
Then there exists an index $i_0$ and a complex $K_{i_0}^\bullet$ of finitely presented $R_{i_0}$-modules with finitely many nonzero terms
such that $K_{i_0}^\bullet\otimes_{R_{i_0}} R=K^\bullet$ (cf. \citestacks{05N7}).
Note that in general $K_{i_0}\otimes_{R_{i_0}}^L R\neq K\in D(R)$.
Fix such a choice of $i_0$ and $K_{i_0}^\bullet$ and denote by $K_i^\bullet$ the tensor product $K_{i_0}^\bullet\otimes_{R_{i_0}} R_i$ for $i\geq i_0$,
so $K^\bullet=\colim_{i\geq i_0}K_i^\bullet$.
Then we have (see \citetwostacks{05DQ}{0G8P})
\begin{align*}
    \Hom_R(K^\bullet,C^\bullet)&=\Hom_{R_{i_0}}(K_{i_0}^\bullet,C^\bullet)\\
    &=\Hom_{R_{i_0}}(K_{i_0}^\bullet,\colim_{i\geq i_0}C_i^\bullet)\\
    &=\colim_{i\geq i_0}\Hom_{R_{i_0}}(K_{i_0}^\bullet,C_i^\bullet)\\
    &=\colim_{i\geq i_0}\Hom_{R_i}(K_i^\bullet,C_i^\bullet).
\end{align*}

Now, let $K\in D(R)$ be perfect \citestacks{0657}
and let $\psi:K\to C$ be a map in $D(R)$.
Then there exists a complex $K^\bullet$ of finite projective $R$-modules with finitely many nonzero terms and a map of complexes $\psi^\bullet:K^\bullet\to C^\bullet$ (see the proof of \citestacks{0658}) that represents $\psi$.
Fix a choice of $i_0$ and $K_{i_0}^\bullet$ as above,
and enlarge $i_0$ if necessary, we may assume that there exists a map of $R_{i_0}$-complexes $\psi_{i_0}:K_{i_0}^\bullet\to C_{i_0}^\bullet$ compatible with $\psi$.
Since being finite projective is the same as being a direct summand of a finite free module we see from \citestacks{05N7} that enlarging $i_0$ if necessary we may assume each term of $K_{i_0}^\bullet$ finite projective, in particular flat.
Then $K_{i_0}\otimes_{R_{i_0}}^L R= K\in D(R)$, and we have the following commutative diagram in $D(R)$:
\[
\begin{CD}
K_{i_0}\otimes_{R_{i_0}}^L R @>{\psi_{i_0}\otimes 1}>> C_{i_0}\otimes_{R_{i_0}}^L R\\
@V{\wr}VV @VVV\\
K@>{\psi}>> C
\end{CD}
\]
%Since $\psi$ is a quasi-isomorphism we see that $C_{i_0}\otimes_{R_{i_0}}^L R\to C$ is a split epimorphism in $D(R)$.
If, furthermore, we are given an integer $n$ and a cohomology class $\alpha_{i_0}\in H^n(C_{i_0}^\bullet)$
such that the image $\alpha\in H^n(C^\bullet)$ comes from a $\beta\in H^n(K^\bullet)$,
then $\beta$ comes from some $\beta_{i_0}\in H^n(K_{i_0}^\bullet)$ after possibly enlarging $i_0$.
The class $\alpha_{i_0}$ does not necessarily agree with $\psi_{i_0}(\beta_{i_0})$. % but it always does after possibly enlarging $i_0$.
However, since $H^n(C^\bullet)=\colim_i H^n(C^\bullet_i)$,
after possibly enlarging $i_0$ we will have $\alpha_{i_0}=\psi_{i_0}(\beta_{i_0})$.
The discussions above lead us to
\begin{Lem}\label{lem:EventuallySplitEpi}
Let $R=\colim_i R_i$ be a direct limit of rings.
Let $i_0$ be an index and $f_{i_0}:X_{i_0}\to \Spec(R_{i_0})$ be a quasi-compact and separated morphism of schemes.
Let $f_i:X_{i}\to \Spec(R_{i})$ and $f:X\to \Spec(R)$ be the base change of $f_{i_0}$ to $R_i$ and $R$ respectively.

Let $K\in D(R)$ be a perfect object and $\psi:K\to Rf_*\cO_X\in D(R)$ a map in $D(R)$.
Assume that there exists a class $\beta\in H^0(K)$ that maps to $1\in f_*\cO_X$.

%is perfect (for example, if $f$ is proper and $R$ is a regular Noetherian ring of finite Krull dimension).
Then for large enough $i$, $\psi$ factors through the canonical map $Rf_{i*}\cO_{X_i}\otimes_{R_i}^L R\to Rf_*\cO_X$,
in a way that $\beta\in H^0(K)$ is mapped to $1\in H^0(Rf_{i*}\cO_{X_i}\otimes_{R_i}^L R)$.
%is a split epimorphism in $D(R)$,
%and there exists a splitting that sends $1\in H^0(Rf_*\cO_X)$ to $1\in H^0(Rf_{i*}\cO_{X_i}\otimes_{R_i}^L R)$.
\end{Lem}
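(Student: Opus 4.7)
The strategy is to realize $Rf_*\cO_X$ and its approximations by compatible \v{C}ech complexes over the $R_i$'s, and then apply the colimit machinery from \S\S\ref{subsec:limit} essentially verbatim.

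First I would set up the \v{C}ech complexes. Since $f_{i_0}$ is quasi-compact, pick a finite affine open cover $\cU_{i_0} = \{U^{(1)}_{i_0}, \dots, U^{(n)}_{i_0}\}$ of $X_{i_0}$. Separatedness is preserved under base change, so the induced covers $\cU_i$ of $X_i$ and $\cU$ of $X$ have affine finite intersections, and the associated alternating \v{C}ech complexes $C_{i_0}^\bullet$, $C_i^\bullet$, $C^\bullet$ represent $Rf_{i_0*}\cO_{X_{i_0}}$, $Rf_{i*}\cO_{X_i}$, $Rf_*\cO_X$ in the respective derived categories. Because taking global sections on affines commutes with base change, at the level of complexes one has $C_i^\bullet = C_{i_0}^\bullet \otimes_{R_{i_0}} R_i$ and $C^\bullet = \colim_{i \geq i_0} C_i^\bullet = C_{i_0}^\bullet \otimes_{R_{i_0}} R$. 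In particular the canonical base-change map $Rf_{i*}\cO_{X_i} \otimes^L_{R_i} R \to Rf_*\cO_X$ is represented by $C_i^\bullet \otimes^L_{R_i} R \to C_i^\bullet \otimes_{R_i} R = C^\bullet$.

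Next I would invoke the discussion immediately preceding the statement: after enlarging $i_0$, represent $K$ by a bounded complex $K_{i_0}^\bullet$ of finite projective (hence flat) $R_{i_0}$-modules, and represent $\psi$ by a chain map $\psi_{i_0}^\bullet \colon K_{i_0}^\bullet \to C_{i_0}^\bullet$ whose base change along $R_{i_0} \to R$ recovers $\psi$ in $D(R)$. Flatness gives $K \simeq K_{i_0} \otimes^L_{R_{i_0}} R$, and the commutative square from that discussion shows $\psi$ factors as
\[
K \simeq K_{i_0} \otimes^L_{R_{i_0}} R \xrightarrow{\psi_{i_0} \otimes 1} C_{i_0} \otimes^L_{R_{i_0}} R \longrightarrow C^\bullet.
\]
For any $i \geq i_0$, transitivity of derived base change, $(-) \otimes^L_{R_{i_0}} R \simeq ((-) \otimes^L_{R_{i_0}} R_i) \otimes^L_{R_i} R$, lets us funnel this composition through $C_i \otimes^L_{R_i} R$, producing the desired factorization of $\psi$ through $Rf_{i*}\cO_{X_i} \otimes^L_{R_i} R \to Rf_*\cO_X$.

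Finally I would track $\beta$. Flatness of $K_{i_0}^\bullet$ combined with exactness of filtered colimits gives $H^0(K) = \colim_{i \geq i_0} H^0(K_i^\bullet)$, so after enlarging $i$ we lift $\beta$ to some $\beta_i \in H^0(K_i^\bullet)$. Its image $\psi_i(\beta_i) \in H^0(C_i^\bullet) = \Gamma(X_i, \cO_{X_i})$ maps in the colimit $\colim_i \Gamma(X_i, \cO_{X_i}) = \Gamma(X, \cO_X)$ to $\psi(\beta) = 1$, and so does the image of $1 \in R_i$; hence for $i$ large enough we already have $\psi_i(\beta_i) = 1$ in $\Gamma(X_i, \cO_{X_i})$. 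Pushing through $H^0(Rf_{i*}\cO_{X_i}) \otimes_{R_i} R \to H^0(Rf_{i*}\cO_{X_i} \otimes^L_{R_i} R)$ sends $\beta$ to the canonical element $1$ there, completing the verification. The entire argument is formal bookkeeping built on the colimit discussion of \S\S\ref{subsec:limit}; the only structural input is arranging compatible \v{C}ech complexes, which uses only that $f_{i_0}$ is quasi-compact and separated, so I do not expect any genuine obstacle.
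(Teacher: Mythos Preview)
Your proposal is correct and follows essentially the same route as the paper: both arguments represent $Rf_*\cO_X$ and its approximations by base-changed \v{C}ech complexes coming from a finite affine cover of $X_{i_0}$, then invoke the \S\S\ref{subsec:limit} colimit machinery to descend $K$ and $\psi$ to some stage $i$ and to arrange that the lift $\beta_i$ hits $1$. The only cosmetic difference is that you route the factorization through $C_{i_0}\otimes^L_{R_{i_0}}R$ first and then pass to $C_i\otimes^L_{R_i}R$ via transitivity, whereas the paper lands directly at stage $i$; your final sentence about ``pushing through $H^0(Rf_{i*}\cO_{X_i})\otimes_{R_i}R\to H^0(Rf_{i*}\cO_{X_i}\otimes^L_{R_i}R)$'' is slightly garbled (it is $\beta_i\otimes 1$, not $\beta$, that lives in the source), but the intended computation is the correct one.
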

\begin{proof}
%When $f$ is proper and $R$ is a regular Noetherian ring of finite Krull dimension,
%$Rf_*\cO_X$ is perfect \citestacks{066Z},
%so the ``for example'' statement is justified.
Fix a finite affine cover $X_{i_0}=\bigcup_{j=1}^m U^{(j)}_{i_0}$.
Then the \v{C}ech complex $\check{C}_{i_0}^\bullet$ associated with this open cover and the structure sheaf $\cO_{X_{i_0}}$
computes $Rf_{i_0*}\cO_{X_{i_0}}$, see \citestacks{01XD}.
We can use the
%and the same holds for $X_i$ and $X$ using the
base change of this open cover to compute $Rf_{i*}\cO_{X_{i}}\ (i\geq i_0)$ and $Rf_{*}\cO_{X}$ with corresponding \v{C}ech complexes $\check{C}_{i}^\bullet=\check{C}_{i_0}^\bullet\otimes_{R_{i_0}}R_i$ and $\check{C}^\bullet=\check{C}_{i_0}^\bullet\otimes_{R_{i_0}}R=\colim_{i\geq i_0} \check{C}_{i}^\bullet$.
The canonical class $1\in H^0(\check{C}_{i}^\bullet)$ is defined by the unit element of the ring $\check{C}_{i}^0=\prod_{j=1}^m \cO_{X_i}(U^{(j)}_i)$.
Therefore we can apply the discussions above to the system $\check{C}_i^\bullet$
and for some $i\geq i_0$ find a commutative diagram
\[
\begin{CD}
K_{i}\otimes_{R_{i}}^L R @>{\psi_i\otimes 1}>> \check{C}_{i}\otimes_{R_{i}}^L R\\
@V{\wr}VV @VVV\\
K@>{\psi}>> \check{C}
\end{CD}
\]
in $D(R)$ and a class $\beta_{i}\in H^0(K_{i}^\bullet)$
that maps to $\beta\in H^0(K)$ and to $1\in H^0(\check{C}_{i})$ by $\psi_i$.
We win.
\end{proof}

\begin{Rem}
Using the language of derived $\infty$-categories \cite[1.3.2]{HA},
we can prove the lemma above as follows.
We may assume that $i_0$ is the smallest element of the index set.
The \v{C}ech complex consideration tells us
$Rf_*\cO_X=\operatorname{colim}_i Rf_{i*}\cO_{X_i}$
in the (bounded-above) derived $\infty$-category of abelian groups, and this identification preserves $R_i$-structure.
(This colimit is the ``homotopy colimit'' (cf. \citestacks{090Z}).)
Therefore
\[Rf_*\cO_X=\operatorname{colim}_{(i,j):i\leq j}\left( Rf_{i*}\cO_{X_i}\otimes_{R_i}^L R_j\right)\]
since $\{(i,i)\mid i\in\Lambda\}$ is a cofinal subset of  $\{(i,j)\mid i\leq j\}$.
Taking colimit with respect to $j$ we get
\[Rf_*\cO_X=\operatorname{colim}_{i} \left(Rf_{i*}\cO_{X_i}\otimes_{R_i}^L R\right)\]
in the (bounded-above) derived $\infty$-category of $R$-modules.
%When $Rf_*\cO_X$ is perfect, it is a
The perfect object $K$ is a
compact object in this $\infty$-category (cf. \cite[Propositions 7.2.4.2 and 7.1.1.15]{HA}),
so we obtain a factorization
$K\to Rf_{i*}\cO_{X_i}\otimes_{R_i}^L R$ for some $i$.
Enlarging $i$ we may assume $\beta$ is mapped to $1$ and we win.
\end{Rem}

\subsection{Direct limit of Noetherian birational derived splinters}
We turn to the main result of this section.

\begin{Thm}\label{thm:MDSlimitHARD}
%Let $S_0$ be a ring.
Let $(S_i)_{i}$ be a direct system of Noetherian rings with colimit $S$.
%Let $S_0$ be a finite $R_0$-algebra such that
%$S_0\otimes_{R_0} A$ is local for all local maps of local rings $S_0\to A$.
%$R_0\to A_0$ is surjective.
%Write $S_i=S_0\otimes_{R_0} R_i$ and $S=S_0\otimes_{R_0} R=\colim_i S_i$.
Assume that $S$ is Noetherian,
%that each $R_i$ is flat over $R_0$,
and that each $S_i$ is a birational derived splinter.
Then $S$ is a birational derived splinter.
\end{Thm}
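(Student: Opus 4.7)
The plan is to combine Lemma \ref{lem:PerfToPseudoCoh}, which reduces splittings with pseudo-coherent target to the perfect case, with the descent machinery of \S\S\ref{subsec:limit}, which descends perfect maps to some $S_i$-level; one then exploits that each $S_i$ is a finite product of Noetherian normal domains to reduce to the integral-domain case handled by Lemma \ref{lem:OneBmodif}.

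In detail, let $f:X\to\Spec(S)$ be an M-morphism. Since $S$ is Noetherian and $f$ is proper, $Rf_*\cO_X$ is pseudo-coherent, so by Lemma \ref{lem:PerfToPseudoCoh} it suffices, for every factorization $S\xrightarrow{\varphi'}K\xrightarrow{\psi}Rf_*\cO_X$ of the canonical map through a perfect $K\in D(S)$, to split $\varphi'$ in $D(S)$. Descend $f$ to $f_{i_0}:X_{i_0}\to\Spec(S_{i_0})$ proper of finite presentation and write $f_i$ for the base change to $S_i$; a \v{C}ech computation as in the proof of Lemma \ref{lem:EventuallySplitEpi} gives $Rf_*\cO_X=\colim_i Rf_{i*}\cO_{X_i}$. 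By standard limit arguments for perfect complexes together with the identifications of \S\S\ref{subsec:limit}, for $i$ large we descend the whole factorization to a diagram $S_i\xrightarrow{\varphi'_i}K_i\xrightarrow{\psi_i}Rf_{i*}\cO_{X_i}$ in $D(S_i)$ with $K_i$ perfect, whose composition is the canonical map $S_i\to Rf_{i*}\cO_{X_i}$ and whose base change along $S_i\to S$ recovers the original factorization. Since base change preserves split monos, it suffices to split $\varphi'_i$ in $D(S_i)$.

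By Lemmas \ref{lem:MDSnormal} and \ref{lem:characterizeNoetherianMDS} each $S_i$ is a Noetherian normal ring, hence a finite product $S_i=\prod_j S_{i,j}$ of normal Noetherian domains, and a direct check (extend an M-morphism over an $S_{i,j}$ by identities on the other factors and apply the MDS property of $S_i$) shows each $S_{i,j}$ is itself an MDS. Decompose accordingly $X_i=\bigsqcup_j X_{i,j}$, $K_i=\prod_j K_{i,j}$, and $S=\prod_j B_j$ with $B_j=S\otimes_{S_i}S_{i,j}$ (some possibly zero). Note that $f$ is surjective, since its image is closed and contains all the generic points of the reduced Noetherian scheme $\Spec(S)$; a standard Noetherian limit argument applied to the finitely generated ideal cutting out the image of $f_{i_0}$ shows $f_i$ is surjective for $i$ large, so each $f_{i,j}$ is proper, surjective, and of finite presentation. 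For indices $j$ with $B_j\neq 0$, the base change of $f_{i,j}$ along $S_{i,j}\to B_j$ is the $j$-th component of $f$, an M-morphism, so by Lemma \ref{lem:OneBmodif} applied to the domain $S_{i,j}$ the map $f_{i,j}$ is itself an M-morphism. The MDS property of $S_{i,j}$ splits $S_{i,j}\to Rf_{i,j*}\cO_{X_{i,j}}$, and precomposing with $\psi_{i,j}$ splits $\varphi'_{i,j}:S_{i,j}\to K_{i,j}$ in $D(S_{i,j})$. Base changing along $S_{i,j}\to B_j$ and assembling over $j$ (the $B_j=0$ components being trivially split) via the decomposition $D(S)=\prod_j D(B_j)$ yields the desired splitting of $\varphi'$ in $D(S)$.

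The main obstacle is that $f_i$ need not itself be an M-morphism over $S_i$, since $S_i\to S$ may fail to preserve nonzerodivisors and so the ``isomorphism over the total fraction ring'' condition does not descend directly. The workaround is the decomposition of $S_i$ into normal-domain factors together with the integral-domain criterion of Lemma \ref{lem:OneBmodif}, applied on each factor where $S$ has non-trivial presence; the remaining factors vanish after base change to $S$ and are therefore harmless.
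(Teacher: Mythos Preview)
Your proof is correct and uses the same essential ingredients as the paper's: Lemma \ref{lem:PerfToPseudoCoh} to reduce to perfect $K$, the limit machinery of \S\S\ref{subsec:limit} to descend, and Lemma \ref{lem:OneBmodif} on integral domains to recover the M-morphism condition. The organization differs in two respects. First, the paper begins by localizing via Corollary \ref{cor:MDSisLocal} so that each $S_i$ is a normal \emph{local} domain, which replaces your product decomposition $S_i=\prod_j S_{i,j}$ and the bookkeeping of the $B_j=0$ factors. Second, and more substantively, the paper never descends the factorization to $D(S_i)$: it stays in $D(S)$ and uses Lemma \ref{lem:EventuallySplitEpi} to factor $K\to Rf_*\cO_X$ through $Rf_{i*}\cO_{X_i}\otimes^L_{S_i}S$, then composes with the derived base change $\epsilon_i:Rf_{i*}\cO_{X_i}\otimes^L_{S_i}S\to S$ of the splitting available over $S_i$. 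This sidesteps the need to descend $K$ to a perfect $K_i$ and to check the compatibilities you need (that $\varphi'_i$ base-changes to $\varphi'$ and that $\psi_i\circ\varphi'_i$ agrees with the canonical map in $D(S_i)$, both of which require an extra enlargement of $i$). Your route works, but one should note that ``base change along $S_i\to S$ recovers the original factorization'' is imprecise, since $Rf_{i*}\cO_{X_i}\otimes^L_{S_i}S\neq Rf_*\cO_X$ in general; fortunately your argument only uses that $\varphi'_i\otimes^L 1=\varphi'$ and that the composite $\psi_i\circ\varphi'_i$ is the canonical map, both of which do follow from the identifications in \S\S\ref{subsec:limit}.
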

\begin{proof}
Since $S$ and all $S_i$ are Noetherian, the problem is local by Corollary \ref{cor:MDSisLocal}. % and Lemma \ref{lem:LocalizeMDS}.
We may therefore assume that $S$ is local, that each $S_i$ is local, and that the transition maps are local.
%By assumption each $S_i$ is local.
% and $R_i\to R$ is faithfully flat, so each $R_i$ and thus $A_i$ is Noetherian.
By Lemma \ref{lem:characterizeNoetherianMDS} each $S_i$ is normal, hence an integral domain.
%This implies in particular $A$ reduced. % as $\Spec(A_i)\to\Spec(R_i)$ is finite and injective and $R_i$ is local.
%Thus $A$ is normal.
%We also note that $\dim R<\infty$.

Let $f:X\to \Spec(S)$ be an M-morphism.
By Corollary \ref{cor:Mlimitdomain},
there exists an index $i_0$ and an M-morphism  $f_{i_0}:X_{i_0}\to \Spec(S_{i_0})$
whose base change to $S$ gives $f.$
As usual, denote by $f_{i}:X_{i}\to \Spec(S_{i})$ the base change of $f_{i_0}$ to $\Spec(S_i)$ where $i\geq i_0$.
By Lemma \ref{lem:OneBmodif} each $f_i$ is an M-morphism and thus $S_i\to Rf_{i*}\cO_{X_i}$ splits in $D(S_i)$.
Therefore we obtain a map
$\epsilon_i: Rf_{i*}\cO_{X_i}\otimes_{S_i}^L S\to S$ in $D(S)$ that sends $1$ to $1$.

Now, let $K\in D(S)$ be perfect and let $S\xrightarrow{\varphi} K\xrightarrow{\psi} Rf_*\cO_X$ be a factorization of the canonical map $S\to Rf_*\cO_X$.
Since $S$ is Noetherian, $Rf_*\cO_X\in D(S)$ is pseudo-coherent (cf. \citestacks{08E8}).
Thus by Lemma \ref{lem:PerfToPseudoCoh} it suffices to show that $\varphi$ is a split mono in $D(S)$.

Denote by $\beta$ the image of $1\in S$ in $H^0(K)$, so $\psi$ maps $\beta$ to $1$.
%Note that $X_0$ is a proper $R_0$-scheme as $R_0\to S_0$ is finite.
By Lemma \ref{lem:EventuallySplitEpi}, for large $i$
there exists a map $\delta_i:K\to Rf_{i*}\cO_{X_i}\otimes_{S_i}^L S$ in $D(S)$ that sends $\beta$ to $1$.
Composing with $\epsilon_i$,
 we get a map $K\to S$
 in $D(S)$ that sends $\beta$ to $1$, in other words, a splitting of $\varphi$, as desired.
% The assumption that all $R_i$ are flat over $R_0$ implies
% $S_i\otimes_{R_i}^L R=S_0\otimes_{R_0}^L R_i\otimes_{R_i}^L R=S_0\otimes_{R_0}^L R=S$.
% Thus we have a map $Rf_*\cO_X\to S$
% in $D(R)$ that sends $1$ to $1$,
% which is a splitting of $S\to Rf_*\cO_X$ in $D(S)$ as desired.
\end{proof}

Combine with Proposition \ref{prop:charactrizeQBDS},
in equal characteristic zero we have the following result.

\begin{Cor}\label{cor:limitQmirror}
Let $A$ be a Noetherian quasi-excellent $\bQ$-algebra.
If $A$ is the direct limit of a system of Noetherian quasi-excellent $\bQ$-algebras that have rational singularities,
then $A$ has rational singularities.
\end{Cor}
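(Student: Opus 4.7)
The plan is to combine Proposition \ref{prop:charactrizeQBDS} with Theorem \ref{thm:MDSlimitHARD} in a direct two-step translation: use the proposition to convert each hypothesis about rational singularities into a statement about birational derived splinters, apply the limit theorem, and then convert back.

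More explicitly, write $A = \operatorname{colim}_i A_i$ where each $A_i$ is a Noetherian quasi-excellent $\bQ$-algebra with rational singularities. By Proposition \ref{prop:charactrizeQBDS} applied to each $A_i$ (which is quasi-excellent and contains $\bQ$), the rational singularities hypothesis is equivalent to $A_i$ being a birational derived splinter. The ring $A$ itself is Noetherian (given) and a $\bQ$-algebra, so Theorem \ref{thm:MDSlimitHARD} applies to the system $(A_i)$ and shows that $A$ is a birational derived splinter.

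Finally, since $A$ is assumed to be a Noetherian quasi-excellent $\bQ$-algebra, Proposition \ref{prop:charactrizeQBDS} applies to $A$ in the reverse direction and converts the birational derived splinter property into the property that $A$ has rational singularities, which is exactly the conclusion.

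There is no real obstacle here; both key results are available, and the only thing to verify is that the hypotheses line up, which they do: each $A_i$ is Noetherian (by assumption) so that Theorem \ref{thm:MDSlimitHARD} applies, and $A$ is quasi-excellent and a $\bQ$-algebra (by assumption) so that Proposition \ref{prop:charactrizeQBDS} applies in the final step. The quasi-excellence of the $A_i$'s is only used to invoke Proposition \ref{prop:charactrizeQBDS} on the source side; it is not needed for the limit theorem itself.
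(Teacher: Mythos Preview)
Your proof is correct and matches the paper's approach exactly: the paper simply states that the corollary follows by combining Theorem \ref{thm:MDSlimitHARD} with Proposition \ref{prop:charactrizeQBDS}, which is precisely the two-step translation you carry out.
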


In general, in view of Popescu's theorem \citestacks{07GC},
we have the following.

\begin{Cor}\label{cor:SmoothImpliesReg}
Let $S_0\to S$ be a regular map of Noetherian rings. %$B$.
If every smooth $S_0$-algebra is a birational derived splinter, then $S$ is a birational derived splinter.
\end{Cor}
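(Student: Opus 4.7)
The plan is to combine Popescu's theorem with the direct limit result just established (Theorem \ref{thm:MDSlimitHARD}). First, I invoke Popescu's theorem \citestacks{07GC}: since $S_0 \to S$ is a regular map of Noetherian rings, we may write $S = \colim_i S_i$ as a filtered colimit in which each $S_i$ is a smooth $S_0$-algebra. The smooth $S_0$-algebras appearing in Popescu's theorem are of finite type, and since $S_0$ is Noetherian, each $S_i$ is Noetherian as well.

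By the hypothesis of the corollary, each $S_i$ is a birational derived splinter. Moreover, $S$ itself is Noetherian by assumption. Thus the hypotheses of Theorem \ref{thm:MDSlimitHARD} are satisfied for the direct system $(S_i)_i$, and that theorem immediately yields that $S$ is a birational derived splinter.

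There is no real obstacle here: the content has already been absorbed into Theorem \ref{thm:MDSlimitHARD}, and this corollary is essentially just the application of Popescu's approximation. The only minor point to verify is that the smooth algebras produced by Popescu's theorem are finite type (hence Noetherian), which is part of the statement of \citestacks{07GC}.
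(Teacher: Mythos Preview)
Your proof is correct and is exactly the argument the paper intends: Popescu's theorem writes $S$ as a filtered colimit of smooth (hence Noetherian) $S_0$-algebras, each a birational derived splinter by hypothesis, and Theorem \ref{thm:MDSlimitHARD} finishes. The paper's own proof is nothing more than the remark ``in view of Popescu's theorem \citestacks{07GC}'' preceding the statement.
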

%\begin{proof}
%Let $R_0\to S_0$ be a surjective map of Noetherian complete local rings where $R_0$ is regular.
%A regular local map between Noetherian complete local rings
%The map $S_0\to S$
%is formally smooth, see \citestacks{07PM}.
%By \cite[Th\'eor\`eme 19.7.2]{EGA04}, %(see also \citestacks{07NS}),
%there exists a flat local $R_0$-algebra $R$
%which is itself a Noetherian complete local ring
%such that $S_0\otimes_{R_0}R=S$.
%By \citestacks{07PM} the map $R_0\to R$ is regular, in particular $R$ is regular \citestacks{033A}.
%Immediate from
%\end{proof}
 
\begin{Rem}
Assume both $S_0$ and $S$ are complete local and the map $S_0\to S$ is local.
%The lemma above is used to reduce Theorem \ref{thm:SmCharp} to the case $B$ is a smooth $A$-algebra.
Let $k_0$ (resp. $k$) be the residue field of $S_0$ (resp. $S$).
If $k/k_0$ is separable \citestacks{030O}, we can argue as follows to prove Corollary \ref{cor:SmoothImpliesReg} avoiding previous derived category techniques.
By \citetwostacks{03C3}{05GH} we can find a residue extension (Definition \ref{def:ResidueExtension}) $S_0\to T$ that realizes $k_0\to k$.
$T$ is then formally smooth over $S_0$ by \citestacks{07PM}
and thus maps to $S$.
Now it is clear that $S$ is isomorphic to a power series algebra over $T$,
thus a residue extention of a localization of a polynomial algebra over $S_0$.
By Lemma \ref{lem:Gseparable=regular} this residue extension is regular.
Then we conclude by Theorem \ref{thm:ScalarFoverR}, which is proved independent of the methods in this section (modulo the proof of Corollary \ref{cor:IndEtMDS}).

However, in general $k/k_0$ is not necessarily separable.
A counterexample is given by
$S_0=k_0=\bF_p(t)$, $S=S_0[X]_{(X^p-t)}^\wedge$, so $k=\bF_p(t^{1/p})$.
\end{Rem}

\section{\'Etale ascent}\label{sec:Et}

We remark the following result on birational maps, which can be understood as a version of Chow's lemma for algebraic spaces \cite[Corollarie 5.7.13]{Raynaud}. % in our setting.
Nevertheless, for the reader's convenience, we (re-)do the proof using just schemes.
The arguments here are similar, but not identical, to
those in \cite[\S 5.7]{Raynaud}.
%, essentially repeating some arguments in \cite[\S 5.7]{Raynaud}
%essentially is  
\begin{Thm}%[cf. {\cite[Theorem A]{DTetale}}]
\label{thm:EtaleDominateM}
Let $A$ be a Noetherian reduced ring, % $K$ its total fraction ring, and assume $\dim K=0$. % (for example $A$ is Noetherian).
$A\to B$ an %inductively
\'etale ring map. % of reduced rings.
%Assume that $A$ has finitely many minimal primes
Let $g:Y\to T:=\Spec(B)$ be an M-morphism. % proper birational \citestacks{01RN} morphism.
Then there exists an M-morphism $f:X\to S:=\Spec(A)$ that admits a morphism $X\times_S T\to Y$ of $T$-schemes.

In particular, if $A$ is a birational derived splinter, so is $B$ (cf. {\cite[Theorem A]{DTetale}}).
\end{Thm}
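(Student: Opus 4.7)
The plan is to first produce the M-morphism $f:X \to S$ together with the $T$-morphism $\varphi:X\times_S T \to Y$, and then deduce the final assertion by flat base change plus factorization of splittings through $\varphi$.

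For the construction I would begin by Nagata-compactifying the composition $Y \to T \to S$, which is separated and of finite type between Noetherian schemes ($g$ is proper; $T\to S$ is étale, hence separated and of finite type). This yields a proper, finitely presented $S$-scheme $\tilde Y$ and an open immersion $j : Y \hookrightarrow \tilde Y$; replacing $\tilde Y$ by the scheme-theoretic image of $j$ makes $Y$ scheme-theoretically dense in $\tilde Y$. Writing $K$ for the total fraction ring of $A$ and $L := B \otimes_A K$ (which equals the total fraction ring of $B$, since $A\to B$ étale preserves reducedness and aligns minimal primes), the M-morphism hypothesis says $Y \otimes_A K = \Spec(L)$. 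A short argument identifies $\tilde Y \otimes_A K$ with $\Spec(L)$: the induced open immersion $\Spec(L) \hookrightarrow \tilde Y \otimes_A K$ is also proper, since both sides are proper over the zero-dimensional $\Spec(K)$, so it is a clopen immersion; density preserved under the flat base change $A \to K$ then eliminates any complement.

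The main obstacle is to modify $\tilde Y$ so that it satisfies the M-morphism condition $X \otimes_A K = \Spec(K)$ canonically, since $\Spec(L) \to \Spec(K)$ is in general only finite étale and may be non-surjective or of degree $>1$. I would address this in the spirit of Raynaud's proof of Chow's lemma for algebraic spaces. One concrete path: use Zariski's Main Theorem to factor $A\to B$ (locally) as $A\to C\to B$ with $A\to C$ finite and $\Spec(B) \hookrightarrow \Spec(C)$ an open immersion; Nagata-compactify $Y$ over $\Spec(C)$ to a proper $\bar Y \to \Spec(C)$ with $\bar Y \times_C \Spec(B) = Y$ on the nose (again, a proper open immersion is clopen, and scheme-theoretic density kills the complementary clopen); then realize $X$ as a modification of $\bar Y$ over $\Spec(A)$, using a clopen-decomposition trick in the style of Lemma \ref{lem:LocalizeM} to match the generic fiber with $\Spec(K)$ exactly. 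The $T$-morphism $\varphi : X \times_S T \to Y$ then emerges from the construction, since by design $X \times_S T$ contains $Y$ as a scheme-theoretically dense open and the modifications happen away from $Y$.

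Once $X$ and $\varphi$ are in hand, the derived splinter statement follows formally. If $A$ is a birational derived splinter, then $A\to Rf_*\cO_X$ splits in $D(A)$; flat base change along $A\to B$ (see \S\S\ref{subsec:Derived}) then gives that $B\to R(f_T)_*\cO_{X\times_S T}$ splits in $D(B)$. The morphism $\varphi$ induces a factorization $B \to Rg_*\cO_Y \to R(f_T)_*\cO_{X\times_S T}$ of the canonical map, so composing a splitting of the composite with the second arrow yields a splitting of $B\to Rg_*\cO_Y$. Since $g$ was an arbitrary M-morphism, $B$ is an MDS, hence a birational derived splinter by Lemma \ref{lem:characterizeNoetherianMDS}.
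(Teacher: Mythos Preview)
Your overall scaffolding---Zariski's Main Theorem to factor through a finite $A\to C$, compactify $Y$ to $\bar Y$ proper over $\Spec(C)$, then seek an M-morphism $X\to S$---matches the paper's, and the final derived-splinter deduction via flat base change and factorization is correct. But the construction of $X$ has a genuine gap. The ``clopen-decomposition trick in the style of Lemma~\ref{lem:LocalizeM}'' only isolates the clopen locus of $\Spec(K)$ over which a given proper map is \emph{already} an isomorphism; it cannot lower the degree of a finite cover. Since $\bar Y\to S$ has generic fiber $\Spec(L)\to\Spec(K)$ finite \'etale of possibly degree ${>}1$, any scheme obtained by modifying $\bar Y$ itself will still have generic fiber $\Spec(L)$, not $\Spec(K)$, and so will not be an M-morphism over $S$. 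If instead $X$ is meant to be a modification of $S$, you have not said how to build it from $\bar Y$ or why $X\times_S T$ should map to $Y$; the assertion that ``$X\times_S T$ contains $Y$ as a dense open'' is not justified by anything in your construction.

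The missing ingredient is Raynaud--Gruson flattening \cite[Th\'eor\`eme 5.2.2]{Raynaud} (via \citestacks{0B49}). Since $\bar Y\to S$ is finite locally free over $\Spec(A_a)$ for some nonzerodivisor $a$, one blows up $S$ in an ideal supported on $V(a)$ to obtain $f:X\to S$, so that the strict transform $\bar Y'\subseteq \bar Y\times_S X$ is finite locally free over $X$. This $f$ is the desired M-morphism. The map $X\times_S T\to Y$ does not arise from $Y$ sitting inside $X\times_S T$ as an open, but from showing that the induced map $h:\bar Y'\times_{\Spec(C)}T\to X\times_S T$ is proper, quasi-finite, and flat, and is an isomorphism after base change to $K$, hence finite locally free of degree $1$ everywhere; then $h^{-1}$ composed with the projection to $Y$ gives the required $T$-morphism.
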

\begin{proof}
The ``in particular'' statement follows from flat base change, see \S\S\ref{subsec:Derived}.
%and the fact reducedness ascends along a \'etale map \citestacks{033B}.

%Since $A\to B$ is \'etale, it is quasi-finite.
By Zariski's Main Theorem %\citestacks{0F2N},
there exists a factorization $T\xrightarrow{j} \overline{T}\xrightarrow{\pi} S$
where $\pi$ is finite % of finite presentation
and $j$ is an open immersion.
Write $\overline{T}=\Spec(\overline{B})$.
We can replace $\overline{T}$ by the scheme-theoretic image of $j$ to assume $j$ scheme-theoretically dominant, in particular, for the total fraction ring $K$ of $A$, $B\otimes_A K=\overline{B}\otimes_A K$, because a dense subset of the discrete space $\Spec(\overline{B}\otimes_A K)$ must be the whole space.

By Lemma \ref{lem:LocalizeProperFP} (which has a much shorter proof in the Noetherian case), there exists a proper morphism $\overline{g}:\overline{Y}\to \overline{T}$
such that $\overline{Y}\times_{\overline{T}}T=Y$.
We know $B\otimes_A K$ is the total fraction ring of $B=B_{\mathrm{red}}$ (cf. Lemma \ref{lem:Partial=TotalFraction}).
Since $g$ is an M-morphism, $g$ is an isomorphism after the base change $A\to K$. Since $\overline{B}\otimes_A K= B\otimes_A K$ the same is true for $\overline{g}$.
Picture:
\[
\begin{CD}
\overline{Y}\times_S \Spec(K)@>>>\overline{Y}\\
@V{\wr}VV  @VV{\overline{g}}V\\
\overline{T}\times_S \Spec(K)@>>>\overline{T}\\
@V{\mathrm{finite\ flat}}VV  @VVV\\
\Spec(K)@>>>S\\
\end{CD}
\]

By \citestacks{02KB} we know that a finite flat morphism of finite presentation is finite locally free (see \citestacks{02KA} for definition).
By \citestacks{06AC},
the composition $\overline{Y}\to \overline{T}\to S$ is finite locally free after the base change $\Spec(A_a)\to S$ for some nonzerodivisor $a\in A$.
By a consequence \citestacks{0B49} of ``flattening'' \cite[Th\'eor\`eme 5.2.2]{Raynaud},
there exists a blow-up $f:X\to S$ of an ideal supported on $\Spec(A/aA)$ set-theoretically and a closed subscheme $\overline{Y}'\subseteq \overline{Y}\times_S X$ finite locally free over $X$ that agrees with $\overline{Y}\times_S X$ over $\Spec(A_a)\subseteq S$.
The last bit follows from the definition of strict transform \citestacks{080D}.
We see that $f:X\to S$ is proper and that the base change of $f$ to $\Spec(A_a)$ is an isomorphism, so $f$ is an M-morphism. %, and that the closed immersion $\overline{Y}'\subseteq \overline{Y}\times_S X$ is of finite presentation (\citestacks{02FV}).
Picture:
\[
\begin{CD}
\overline{Y}'@>>>\overline{Y}\times_S X@>>>\overline{Y}\\
@.@VVV  @VV{\overline{g}}V\\
{}@.\overline{T}\times_S X@>>>\overline{T}\\
@.@VVV  @VV{\overline{\pi}}V\\
{}@.X@>{f}>>S\\
\end{CD}
\]

Let $Y'=\overline{Y}'\times_{\overline{T}} T$ and $h:Y'\to T\times_S X$ be the morphism induced by $\overline{g}$. Then the following diagram of schemes is commutative:
\[
\begin{CD}
Y'@>>>Y\\
@V{h}VV  @VV{g}V\\
T\times_S X@>>>T\\
@VVV  @VVV\\
X@>{f}>>S\\
\end{CD}
\]
It now suffice to show that $h$ is an isomorphism, so $T\times_S X$ dominates $Y$.

By our choice $Y'\to X$ is quasi-finite and flat.
Since $T\times_S X\to X$ is  \'etale, $h$ is quasi-finite and flat (cf. \citestacks{04R3}).
However, by construction, $h$ is proper, % of finite presentation.
so $h$ is finite flat, and we need to show that the degree of $h$ is $1$.
We have $\overline{Y}'=\overline{Y}\times_S X$ over $\Spec(A_a)$, so $Y'=Y\times_S X$ over $\Spec(A_a)$; also $Y=T$ after the base change $A\to K$ as noted before.
Thus $h$ is an isomorphism after the base change $A\to K$, so the locus where $\deg h=1$ contains the closure of the image of $b:T\times_S X\times_S\Spec(K)\to T\times_S X$.
However, $X\to S$ is a blowup, so a nonzerodivisor in $A$ is always a nonzerodivisor on $X$. Thus $X\times_S\Spec(K)\to X$ is (scheme-theoretically) dominant.
Then the same is true after the flat base change $T\to S$, so $b$ is dominant and $h$ is an isomorphism.
\end{proof}

\begin{Cor}%[cf. {\cite[Theorem 2.8]{Splinters}}]
\label{cor:IndEtMDS}
Let $A$ be a Noetherian birational derived splinter and let $B$ be a direct limit of \'etale $A$-algebras.
If $B$ is Noetherian, then $B$ is a birational derived splinter.
\end{Cor}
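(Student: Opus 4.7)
The plan is to combine the two main results immediately preceding the corollary: étale ascent (Theorem \ref{thm:EtaleDominateM}) and the direct limit theorem (Theorem \ref{thm:MDSlimitHARD}). Write $B=\colim_i B_i$ as a filtered colimit of étale $A$-algebras $B_i$.

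First, I would observe that each $B_i$ is Noetherian: étale maps are of finite presentation, hence of finite type, and a finite-type algebra over a Noetherian ring is Noetherian. Next, by Theorem \ref{thm:EtaleDominateM} applied to the étale map $A\to B_i$, each $B_i$ is itself a Noetherian birational derived splinter, using that $A$ is one by hypothesis.

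Finally, since $B$ is assumed Noetherian and is the filtered colimit of the Noetherian birational derived splinters $B_i$, Theorem \ref{thm:MDSlimitHARD} directly yields that $B$ is a birational derived splinter. There is no real obstacle here; the only thing to verify is that the hypotheses of the two theorems match, which they do once one notes the Noetherianity of each $B_i$. The corollary is essentially a formal consequence of the preceding two theorems, and this is why it is stated as a corollary rather than as an independent theorem.
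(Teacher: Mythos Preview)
Your proof is correct and matches the paper's primary argument exactly: apply Theorem~\ref{thm:EtaleDominateM} to see each $B_i$ is a Noetherian birational derived splinter, then invoke Theorem~\ref{thm:MDSlimitHARD}. The paper also records an alternative route via Corollaries~\ref{cor:MDSisLocal} and~\ref{cor:Mlimitdomain} together with flat base change (\S\S\ref{subsec:Derived}), which bypasses the direct limit theorem, but your approach is the one the paper gives as the main proof.
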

\begin{proof}
Immediate from Theorem \ref{thm:MDSlimitHARD}.
Alternatively, one can apply Corollaries \ref{cor:MDSisLocal} and \ref{cor:Mlimitdomain} and flat base change (\S\S \ref{subsec:Derived}); details of this route are left to the reader.
%By Lemma \ref{lem:characterizeNoetherianMDS}, $A$ is normal, hence so is $B$.
%%By Corollary \ref{cor:MDSisLocal} (or Lemma \ref{lem:ProdMDS})
%we may assume $B$ a normal domain.
%Write $B=\colim_i B_i$ where each $B_i$ is an \'etale $A$-algebra, hence an MDS by Theorem \ref{thm:EtaleDominateM}.
%Note that each $B_i$ is a finite product of normal domains, so we may replace $\Spec(B_i)$ by its only connected component that contains the image of $\Spec(B)$ to assume that $B_i$ are (normal) domains.
%Note that the transition maps $B_i\to B_j$ are flat (cf. \citestacks{00U7}), so $B$ is flat over each $B_i$.
%A flat map of integral domains is injective.
%We conclude by Lemma \ref{lem:MDSlimit}.
\end{proof}

\begin{Rem}
We can also deduce from Theorem \ref{thm:EtaleDominateM} that if $A\to B$ is a local map of Noetherian local rings that is essentially \'etale,
and $A$ is pseudo-rational in the sense of \cite{LT81},
then so is $B$.
\end{Rem}

%\begin{Rem}
%With more effort, we can weaken the Noetherian assumption in Theorem \ref{thm:EtaleDominateM} to that the total fraction ring $K$ of $A$ satisfies $\dim K=0$, and in  Corollary \ref{cor:IndEtMDS} to that both $A$ and $B$ finitely many minimal primes, and possibly weaker.
%We do not present the arguments here as the results are not needed.
%\end{Rem}

\section{Regular residue extensions}\label{sec:ResidueExtn}

We continue to use our ad-hoc definition of ultraproduct of local rings \cite[Definition 3.1]{Splinters}.

\begin{Def}
Let $X$ be an index set and $A_x\ (x\in X)$ be a family of local rings.
An \emph{ultraproduct} of the rings $A_x\ (x\in X)$ is the localization of the ring $\prod_x A_x$ at a maximal ideal.

An \emph{ultrapower} of a local ring $A$ is an ultraproduct of a family $A_x\ (x\in X)$ where each $A_x=A$.
\end{Def}

For convenience, we state the next two lemmas for ultrapowers.
One can easily extend them to general ultrapoducts.

\begin{Lem}\label{lem:DomainUltraprod}
Let $A$ be a local domain. Then any ultrapower of $A$ is an integral domain.
\end{Lem}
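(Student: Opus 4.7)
The plan is as follows. Writing $B=P_\mathfrak{M}$ where $P=\prod_{x\in X}A$ and $\mathfrak{M}$ is a maximal ideal of $P$, I would show directly that $B$ has no nontrivial zero-divisors. Suppose $(a_x),(b_x)\in P$ have product whose image in $B$ vanishes; then there exists $(s_x)\in P\setminus \mathfrak{M}$ with $s_x a_x b_x=0$ in $A$ for every $x$. Since $A$ is a domain, the three subsets $S,U,V\subseteq X$ on which $s_x,a_x,b_x$ respectively vanish satisfy $S\cup U\cup V=X$.

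Next, I would introduce the idempotents $e_S,e_U,e_V\in P$ given by the characteristic functions of $S,U,V$. The covering $S\cup U\cup V=X$ translates to the relation
\[(1-e_S)(1-e_U)(1-e_V)=0\]
in $P$, hence in $B$. On the other hand, by construction $(s_x)\,e_S=0$ in $P$, and $(s_x)$ is a unit in $B$; therefore the image of $e_S$ in $B$ is zero, so $1-e_S$ is a unit of $B$. Cancelling it gives $(1-e_U)(1-e_V)=0$ in $B$.

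To finish, I would use that $B$ is a local ring, and hence its only idempotents are $0$ and $1$. Consequently the images of $1-e_U$ and $1-e_V$ in $B$ each lie in $\{0,1\}$, and their vanishing product forces at least one of them to be $0$. Together with the identities $(a_x)\,e_U=0$ and $(b_x)\,e_V=0$ in $P$ (which hold because $a_x=0$ on $U$ and $b_x=0$ on $V$), this yields $(a_x)=0$ or $(b_x)=0$ in $B$, proving that $B$ is a domain.

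The argument is essentially elementary, so there is no substantial obstacle; the only point requiring care is the bookkeeping of the three idempotents $e_S,e_U,e_V$ and the observation that locality of $B$ collapses each of them to $0$ or $1$. No derived-category input or results from earlier sections are needed, and one could rephrase the whole argument model-theoretically (the maximal ideal $\mathfrak{M}$ pulls back to an ultrafilter $\mathcal{U}$ on $X$ via the Boolean algebra of idempotents of $P$, and the above is the usual \L{}o\'s-style verification that $\mathcal{U}$-almost-every $x$ lies in $U$ or in $V$) but the direct idempotent calculation is shorter.
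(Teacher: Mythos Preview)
Your proof is correct and follows essentially the same idempotent argument as the paper. The paper's version is marginally shorter because it first reduces to the case $ab=0$ in $P$ (by absorbing the unit $s$ into one of the factors), which eliminates the need for your third set $S$ and idempotent $e_S$; otherwise the two arguments are identical.
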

\begin{proof}
Let $A_\natural= (A^X)_\fM$ be an ultrapower of $A$ where $X$ is an index set.
To show $A_\natural$ an integral domain, it suffices to show that if $a=(a_x)_{x\in X}$ and $b=(b_x)_{x\in X}$ are two elements in $A^X$ such that $ab=0$, then there exists an $s\not\in \fM$ such that $sa=0$ or $sb=0$.

Let $U=\{x\in X\mid a_x=0\}$ and $V=\{x\in X\mid b_x=0\}$, so $U\cup V=X$ as $A$ is an integral domain.
Let $e_U\in A^X$ be the idempotent defined by $(e_U)_x=1$ when $x\in U$ and $(e_U)_x=0$ when $x\not\in U$ and define $e_V$ similarly.
Then $(1-e_U)(1-e_V)=0$, so either $e_U\not\in \fM$ or $e_V\not\in \fM$.
But $e_Ua=0=e_Vb$.
We win.
\end{proof}

\begin{Lem}%[cf. {\cite[Lemma 3.2]{Splinters}}]
\label{lem:MDSUltraprod}
Let $S$ be a local MDS, $S_\natural$ an ultrapower of $S$.
%Let $S$ be a finite and finitely presented $A$-algebra that is an MDS.
If $S$ has finitely many minimal primes,
then $S_\natural$ is an MDS.
\end{Lem}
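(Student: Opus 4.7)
The plan is to realize $S_\natural$ as a localization of the product ring $S^X$ (where $X$ is the index set of the ultrapower) at a maximal ideal $\fM$, and to deduce the MDS property by combining three previous lemmas: the product lemma (Lemma \ref{lem:ProdMDS}), the localization lemma (Lemma \ref{lem:LocalizeMDS}), and the total fraction ring computation (Lemma \ref{lem:TotalFractionProduct}).

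First, I would invoke Lemma \ref{lem:MDSnormal} to record that $S$ is reduced and integrally closed in its total fraction ring $K$. Because $S$ has only finitely many minimal primes, Lemma \ref{lem:TotalFractionProduct} then applies to the family $(S)_{x\in X}$ and identifies the total fraction ring of $S^X$ with $K^X$, which is zero-dimensional.

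Next, I would observe that $S^X$ is itself an MDS: this is immediate from Lemma \ref{lem:ProdMDS}. Combined with the previous step, $S^X$ therefore satisfies the hypotheses of Lemma \ref{lem:LocalizeMDS} (it is an MDS whose total fraction ring has dimension zero). Since $S_\natural = (S^X)_\fM$ is a localization of $S^X$, Lemma \ref{lem:LocalizeMDS} yields that $S_\natural$ is an MDS.

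There is essentially no obstacle here beyond bookkeeping: the hypothesis on minimal primes is used exactly once, to guarantee that Lemma \ref{lem:TotalFractionProduct} applies and produces a zero-dimensional total fraction ring, which in turn is the hypothesis that makes Lemma \ref{lem:LocalizeMDS} available. Everything else is an appeal to previously proved results.
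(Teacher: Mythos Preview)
Your proposal is correct and follows essentially the same route as the paper: invoke Lemma~\ref{lem:MDSnormal} for reducedness, apply Lemma~\ref{lem:TotalFractionProduct} (using the finitely-many-minimal-primes hypothesis) to get $\dim K^X=0$, use Lemma~\ref{lem:ProdMDS} to see $S^X$ is an MDS, and conclude via Lemma~\ref{lem:LocalizeMDS}. The only cosmetic difference is that you also record integral closedness from Lemma~\ref{lem:MDSnormal}, which is harmless but not needed here.
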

\begin{proof}
%$S$ is a finitely presented $A$-module, see \citestacks{0564}.
%We can write $A_\natural=(A^X)_\fM$, and $S\otimes_A A^X=S^X$ by \citestacks{059K},
%so
$S_\natural$ is a localization of some $S^X$ which is an MDS by Lemma \ref{lem:ProdMDS}.

Let $K$ be the total fraction ring of $S$.
Note that $S$ is reduced by Lemma \ref{lem:MDSnormal}.
By Lemma \ref{lem:TotalFractionProduct}, $K^X$ is the total fraction ring of $S^X$ and $\dim (K^X)=0.$
The result now follows from Lemma \ref{lem:LocalizeMDS}.
\end{proof}

%\begin{Lem}\label{lem:ultrapowerFlat}
%Let $A$ be a Noetherian local ring.
%Then any ultrapower of $A$ is a faithfully flat $A$-algebra.
%\end{Lem}
%\begin{proof}
%Note that an ultrapower $A_\natural=(A^X)_{\fM}$ of $A$ is canonically an $A$-algebra via the diagonal map.
%$A_\natural$ is a flat $A$-algebra by %citetwostacks{05CZ}{05CY}.
%Faithful flatness now follows from Lemma \ref{lem:mcontainedinM}.
%we just need to show that the maximal ideal $\fM$ of $A^X$ lies above the maximal ideal $\fm$ of $A$.
%But this is because $\fm$ is contained in the Jacobson radical of $A^X$, see \citestacks{0AME}.
%\end{proof}

\begin{Thm}%[cf. {\cite[Theorem 1.4]{Splinters}}]
\label{thm:ScalarFoverRMIRROR}
Let $(S,\fm)\to (S',\fm')$ be a regular homomorphism of Noetherian local rings with $\fm'=\fm S'$.
If $S$ is a birational derived splinter, so is $S'.$
In particular, the completion of a local G-ring that is a birational derived splinter is a birational derived splinter.
\end{Thm}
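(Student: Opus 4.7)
\emph{Plan.} The strategy is to reduce, via Proposition \ref{prop:PureDescendMDS}, to the construction of a pure, nonzerodivisor-preserving local ring map $S' \to S_\natural$ into some ultrapower $S_\natural$ of $S$. Since $S$ is a Noetherian birational derived splinter, it is an MDS (Lemma \ref{lem:characterizeNoetherianMDS}), and being Noetherian and reduced it has only finitely many minimal primes; hence by Lemma \ref{lem:MDSUltraprod} any ultrapower $S_\natural$ of $S$ is itself an MDS. Once such a map $S'\to S_\natural$ is in hand, Proposition \ref{prop:PureDescendMDS} directly yields that $S'$ is a Noetherian birational derived splinter.

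The bulk of the work is therefore the construction of a faithfully flat local $S$-algebra map $S'\to S_\natural$, paralleling \cite[Theorem 1.3]{Splinters}. The regularity of $S\to S'$ forces the residue field extension $k\to k'$ to be separable, which allows an embedding of $k'$ into a suitably chosen ultrapower $k_\natural$ of $k$ by standard model-theoretic / ultraproduct techniques. This embedding is then lifted to a local $S$-algebra map $S'\to S_\natural$ using the formal smoothness of $S\to S'$ (a consequence of the regular residue extension hypothesis, cf.\ \citestacks{07PM} and Lemma \ref{lem:Gseparable=regular}) together with the completeness/henselianness properties of ultrapowers and Cohen structure theory. The resulting map is faithfully flat by the local criterion for flatness, since $\fm'=\fm S'$ and $S\to S_\natural$ is faithfully flat.

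Faithful flatness of $S'\to S_\natural$ automatically gives both purity and preservation of nonzerodivisors. With $S_\natural$ an MDS, Proposition \ref{prop:PureDescendMDS} completes the proof of the main statement. For the ``in particular'' clause, if $S$ is a local G-ring then by definition $S\to S^\wedge$ is a regular homomorphism, and the maximal ideal of $S^\wedge$ equals $\fm S^\wedge$; hence the first part of the theorem applies to $S\to S^\wedge$.

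The principal obstacle is the construction of the faithfully flat lift $S'\to S_\natural$: the residue-level embedding $k'\hookrightarrow k_\natural$ is routine model theory, but lifting it to a ring map out of $S'$ that is simultaneously local, $S$-linear, and faithfully flat requires carefully exploiting the regular residue extension hypothesis. This is precisely where separability of $k'/k$ enters (cf.\ also the counterexample in the Remark following Corollary \ref{cor:SmoothImpliesReg}).
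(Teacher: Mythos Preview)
Your overall skeleton (build an MDS ultrapower, map $S'$ into it, descend via Proposition~\ref{prop:PureDescendMDS}) matches the paper's, but the construction of the map $S'\to S_\natural$ has a genuine gap. You claim that separability of $k'/k$ yields an embedding of $k'$ into an ultrapower $k_\natural$ of $k$ ``by standard model-theoretic techniques''; this is false. By \L o\'s's theorem every ultrapower of $k$ is elementarily equivalent to $k$, so for instance with $k=\bQ$ and $k'=\bQ(\sqrt{2})$ no ultrapower of $\bQ$ contains a square root of $2$. This is exactly why the paper's proof uses Corollary~\ref{cor:IndEtMDS}, an ingredient you omit entirely: one first replaces $S$ by a suitable ind-\'etale (e.g.\ strictly henselian) extension, so that the residue field becomes separably closed; an ultrapower of a separably closed field is again separably closed and can be taken of large enough cardinality to receive $k'$. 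The same omission breaks your lifting step: you invoke ``completeness/henselianness properties of ultrapowers,'' but an ultrapower of $S$ is henselian only if $S$ already is (again by \L o\'s), and it is never $\fm$-adically complete or even separated when $\dim S>0$ (the element $(a_x)_x$ with $a_x\in\fm^x\setminus\fm^{x+1}$ is nonzero in $\bigcap_n\fm^nS_\natural$ for a nonprincipal ultrafilter on $\bN$). So formal smoothness alone does not produce the map $S'\to S_\natural$; one needs $S_\natural$ henselian, which again forces the passage through an ind-\'etale extension.

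A secondary issue: your faithful flatness claim for $S'\to S_\natural$ via the local criterion is not justified, because that criterion requires $S_\natural$ to be $\fm'$-adically ideal-separated, which fails as just noted. The paper sidesteps this entirely: it only asserts that the map is \emph{pure}, and handles the nonzerodivisor condition separately by observing that $S$ is a normal Noetherian local ring, hence a domain, so $S_\natural$ is a domain by Lemma~\ref{lem:DomainUltraprod}; any injective map into a domain trivially sends nonzerodivisors to nonzerodivisors.
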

\begin{proof}
%Recall that for a Noetherian ring, being a birational derived splinter is the same as being an MDS, see Lemma \ref{lem:characterizeNoetherianMDS}.
%
The proof is mostly verbatim to the proof of \cite[Theorem 1.4]{Splinters},
replacing the ingredients
Lemma 2.2, Theorem 2.8, and Lemma 3.2 there by
our Proposition \ref{prop:PureDescendMDS}, Corollary \ref{cor:IndEtMDS}, and Lemma \ref{lem:MDSUltraprod} respectively.
The only difference occurs at the last step,
where for Proposition \ref{prop:PureDescendMDS} to apply,
we need to know that the pure map $S'\to S_\natural$ sends nonzerodivisors to nonzerodivisors.
However, $S$ is an integral domain (Lemma \ref{lem:characterizeNoetherianMDS}), hence so is $S_\natural$ (Lemma \ref{lem:DomainUltraprod}).
We win.
\end{proof}

\begin{Cor}%[cf. {\cite[Theorem C]{DTetale}}]
\label{cor:completionMDS}
Let $S$ be a Noetherian local G-ring that is a birational derived splinter.
Then any separable residue extension (Definition  \ref{def:ResidueExtension}) of $S$ is a birational derived splinter.
%In particular, the completion $S^\wedge$ is a birational derived splinter.
\end{Cor}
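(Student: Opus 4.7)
The plan is to reduce this corollary immediately to Theorem \ref{thm:ScalarFoverRMIRROR} via Lemma \ref{lem:Gseparable=regular}. Let $S \to S'$ be a separable residue extension; by definition this means $S \to S'$ is a flat local map with $\fm' = \fm S'$ and the residue field extension $k \to l$ is separable.

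First, I would invoke Lemma \ref{lem:Gseparable=regular} to upgrade the hypothesis: since $S$ is assumed to be a G-ring and $S \to S'$ is a separable residue extension, the map is in fact a regular residue extension, i.e., a regular homomorphism of Noetherian local rings satisfying $\fm' = \fm S'$.

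Then I would apply Theorem \ref{thm:ScalarFoverRMIRROR} directly to this regular homomorphism to conclude that $S'$ is a birational derived splinter. No further work is required; neither step involves any obstacle, since the two ingredients have already been established in the paper.
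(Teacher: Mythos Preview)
Your proposal is correct and follows exactly the paper's own argument: invoke Lemma \ref{lem:Gseparable=regular} to upgrade the separable residue extension to a regular one (using that $S$ is a G-ring), then apply Theorem \ref{thm:ScalarFoverRMIRROR}. The paper's proof is a one-line version of precisely this.
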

\begin{proof}
In view of Lemma \ref{lem:Gseparable=regular}, this is the special case of Theorem \ref{thm:ScalarFoverRMIRROR}
where $S$ is a G-ring.
\end{proof}
%by the same reason Corollary \ref{cor:completionMDS} follows from Theorem \ref{thm:ScalarFoverR}.
%\begin{Cor}\label{cor:SplinterScalarG}
%Let $(S,\fm_S,k_S)\to (A,\fm_A,k_A)$ be a flat homomorphism of Noetherian local rings.
%Assume that $\fm_A=\fm_SA$, that $k_A/k_S$ is separable,
%and that $S$ is a G-ring.
%If $S$ is a splinter, so is $A$.
%\end{Cor}

\section{Openness in prime characteristic and regular ascent}\label{sec:OpenandReg}
%In this section we develop parallel results of \cite{DTopen} in this section and then prove Theorem \ref{thm:SmCharp}.
We use the notaions in \cite[\S\S 2.3]{DTopen} (Frobenius pushforward), and in our \S\S\ref{subsec:Derived} (trace ideal).

We need one preparation before we go into the argument.
\begin{Lem}\label{lem:Aquasiexcellent}
Let $A$ be as in Theorem \ref{thm:open}.
Then $A$ is quasi-excellent. Moreover, the normal, Cohen-Macaulay, and $F$-pure loci of $\Spec(A)$ are open.
\end{Lem}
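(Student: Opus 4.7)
The plan is to dispatch the four assertions—quasi-excellence, and openness of the normal, Cohen--Macaulay, and $F$-pure loci—separately, treating the two alternative hypotheses (\emph{$A$ is $F$-finite} vs.\ \emph{$A$ is essentially of finite type over a Noetherian local G-ring $R$}) as they arise.

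First I would verify quasi-excellence. In the $F$-finite case, this is Kunz's theorem (in the form of Gabber's generalization), which asserts that an $F$-finite Noetherian $\bF_p$-algebra is excellent. In the second case, a Noetherian local G-ring is quasi-excellent (a standard fact, cf.\ \citestacks{07QV}), and quasi-excellence is preserved under essentially of finite type algebras, so $A$ itself is quasi-excellent.

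Second, openness of the normal and Cohen--Macaulay loci then follows formally from quasi-excellence, by the standard openness criteria for these loci in quasi-excellent Noetherian rings (cf.\ \citestacks{07R3} and the analogous statement for the Cohen--Macaulay locus, which reduces to the $J$-$2$ property together with geometric regularity of formal fibers).

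The main obstacle is the openness of the $F$-pure locus, in particular in the essentially-of-finite-type-over-a-local-G-ring case where Frobenius is no longer module-finite. In the $F$-finite case I would note that $F_*A$ is a finitely presented $A$-module, so the condition ``$A_\fp\to F_*A_\fp$ is pure'' is constructible on $\Spec A$ and open on the locus where it holds, giving openness directly. In the second case, the strategy is to reduce to the $F$-finite case by applying the Hochster--Huneke $\Gamma$-construction to produce a faithfully flat $A$-algebra that is $F$-finite and that preserves and reflects $F$-purity of localizations at primes; once in the $F$-finite setting, descent along the faithfully flat map transports the openness back to $\Spec A$. Alternatively, one may cite the corresponding openness result developed in the same spirit as \cite{DTopen}, where this type of argument is carried out in detail.
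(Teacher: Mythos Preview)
Your proposal is correct and follows essentially the same route as the paper: quasi-excellence via Kunz's theorem (resp.\ permanence under essentially-of-finite-type extensions of local G-rings), openness of the normal and Cohen--Macaulay loci as formal consequences of quasi-excellence, and openness of the $F$-pure locus handled trivially in the $F$-finite case and via the $\Gamma$-construction otherwise. The only difference is bibliographic: for the $F$-pure locus in the non-$F$-finite case the paper cites \cite[Corollary 3.5]{MurGamma} rather than \cite{DTopen}, but that is precisely the $\Gamma$-construction argument you sketch.
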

\begin{proof}
%For a proof of the fact $A$ quasi-excellent, see the proof of \cite[Theorem 4.3.1]{DTopen}, third paragraph.
By a theorem of Kunz, $F$-finite Noetherian rings are excellent, see \cite[Theorem 108]{Matsumura}.
Noetherian local G-rings are quasi-excellent,
and the property of being quasi-excellent is preserved by extensions essentially of finite type,
see \cite[(34.A)]{Matsumura}.
We got quasi-excellence.

%$F$-finite Noetherian rings have finite Krull dimension by \cite[Proposition 1.1]{Kun76}, and so do Noetherian local rings.
%Therefore $\dim A<\infty$.
%
%A Noetherian ring of dimension $\leq d$ is Cohen-Macaulay if and only if it satisfies Serre's condition $(S_d)$.
%Therefore
Since $A$ is quasi-excellent, \cite[Proposition 6.11.8]{EGA4_2} shows that
the function $\fp\mapsto \dim A_\fp-\operatorname{depth} A_\fp$ is constructible and upper semi-continuous on $\Spec(A)$.
Thus the Cohen-Macaulay locus of $A$ is open.
The normal locus is open as well by, for example, \emph{ibid}., Corollaire 6.13.5.

Finally, openness of $F$-pure locus is trivial in the $F$-finite case (as pure is equivalent to split, see \citestacks{058L}) and is \cite[Corollary 3.5]{MurGamma} in the other case.
\end{proof}

\begin{Def}[={\cite[Definition 3.1.1]{DTopen}}]
An ideal $\fa$ of an $\bF_p$-algebra $A$ is \emph{uniformly $F$-compatible} if for all $e\in\bZ_{>0}$ and all $A$-linear maps
$\varphi:F_*^e A\to A$, we have $\varphi(F_*^e \fa)\subseteq\fa$.
%
%We say $\fa$ \emph{flat-universally uniformly $F$-compatible} if for all flat ring maps $A\to A'$,
%$\fa A'$ is uniformly $F$-compatible.
\end{Def}

\begin{Lem}[cf. {\cite[Lemma 3.2.3]{DTopen}}]
\label{lem:TraceIdealFcomp}
Let $A$ be an $\bF_p$-algebra and $X$ a qcqs $A$-scheme.
Then the ideal $\ft(X/A)$ is uniformly $F$-compatible.
%If $A$ is Noetherian and $X$ is proper over $A$,
%then $\ft(X/A)$ is %flat-universally
%uniformly $F$-compatible.
\end{Lem}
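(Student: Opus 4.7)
The plan is to mimic \cite[Lemma 3.2.3]{DTopen} in our derived setting. Fix $e\in\bZ_{>0}$, an $A$-linear map $\varphi\colon F_*^e A\to A$, and an element $a\in \ft(X/A)$; I must show $\varphi(F_*^e a)\in\ft(X/A)$. By the definition recalled in \S\S\ref{subsec:Derived}, there is a map $\psi\colon Rf_*\cO_X\to A$ in $D(A)$ whose composition with the canonical map $\mathrm{can}\colon A\to Rf_*\cO_X$ is multiplication by $a$. My strategy is to produce a new map $\beta\colon Rf_*\cO_X\to A$ in $D(A)$ with $\beta\circ \mathrm{can}$ equal to multiplication by $\varphi(F_*^e a)$, which will force $\varphi(F_*^e a)\in \ft(X/A)$.

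The key construction is an $A$-linear map
\[\alpha\colon Rf_*\cO_X\longrightarrow F_*^e Rf_*\cO_X\]
in $D(A)$, obtained by applying $Rf_*$ to the sheaf-level Frobenius $\cO_X\to F^e_{X,*}\cO_X$. Since $F^e_{X,*}$ is exact (the Frobenius is a homeomorphism) and $f\circ F^e_X=F^e_{\Spec A}\circ f$, there is a canonical identification $Rf_*(F^e_{X,*}\cO_X)\cong F_*^e Rf_*\cO_X$. The naturality of the absolute Frobenius with respect to $f$ implies that the diagram
\[
\begin{CD}
A @>{\mathrm{can}}>> Rf_*\cO_X\\
@V{F^e}VV @VV{\alpha}V\\
F_*^e A @>{F_*^e\mathrm{can}}>> F_*^e Rf_*\cO_X
\end{CD}
\]
commutes in $D(A)$.

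I then set $\beta:=\varphi\circ F_*^e\psi\circ\alpha$. Using the commutative square above,
\[\beta\circ\mathrm{can}=\varphi\circ F_*^e\psi\circ F_*^e\mathrm{can}\circ F^e=\varphi\circ F_*^e(\psi\circ\mathrm{can})\circ F^e=\varphi\circ F_*^e(a\cdot\mathrm{id})\circ F^e,\]
and chasing $1\in A$ through this composition yields $\varphi(F_*^e a)\in A$. As a map $A\to A$, $\beta\circ\mathrm{can}$ is therefore multiplication by $\varphi(F_*^e a)$, whence $\varphi(F_*^e a)\in \ft(X/A)$, as required. The only point requiring care is the commutativity of the Frobenius square defining $\alpha$; this is formal from the naturality of the absolute Frobenius on $\bF_p$-schemes together with the exactness of $F^e_{X,*}$, and I anticipate no serious obstacle.
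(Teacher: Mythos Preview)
Your proof is correct and follows essentially the same approach as the paper: both construct the map $Rf_*\cO_X\to F_*^eRf_*\cO_X$ from the sheaf Frobenius (using that $F^e$ is affine/a homeomorphism), compose with $F_*^e\psi$ and $\varphi$, and then check that the resulting map sends $1$ to $\varphi(F_*^e a)$. The paper's argument is slightly terser, but the content is identical.
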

\begin{proof}
%By flat base change, \S\S\ref{subsec:Derived},
%%it suffices to show $\fa$ uniformly $F$-compatible.
%
Let $f:X\to \Spec(A)$ be the structural map and consider the following commutative diagram of schemes
\[
\begin{CD}
X@>{F^e}>> X\\
@V{f}VV       @V{f}VV\\
\Spec(A)@>{F^e}>>\Spec(A)
\end{CD}
\]
where $F^e$ is the $e$-th iterated absolute Frobenius of $X$ or $\Spec(A)$.
The map $\cO_X\to F^e_*\cO_X$ induces a map $Rf_*\cO_X\to Rf_*F^e_*\cO_X=F^e_*Rf_*\cO_X$,
the identity because $F^e$ is affine.
If $\eta:Rf_*\cO_X\to A$ is a map in $D(A)$, and $\varphi:F^e_*A\to A$ is an $A$-linear map,
then the composition
\[
\eta':Rf_*\cO_X\to Rf_*F^e_*\cO_X=F^e_*Rf_*\cO_X\xrightarrow{F^e_*\eta} F^e_*A\xrightarrow{\varphi} A
\]
is also a map in $D(A)$.
If the image of $\eta$ in $A$ is $a$ then it is clear that the image of $\eta'$ in $A$ is $\varphi(F^e_*a)$, so $\varphi(F_*^e \ft(X/A))\subseteq\ft(X/A)$ as desired.
\end{proof}

%We use the notations in Subsection \ref{subsec:Derived} in the proof of the theorem below.
\begin{Thm}[cf. {\cite[Theorem 4.3.1]{DTopen}}]
\label{thm:TraceIdealsFinitelyMany}
Let $A$ be as in Theorem \ref{thm:openoverFpure}, or be as in Theorem \ref{thm:open} and $F$-pure.

Then the set of ideals $\Sigma_A:=\{\ft(X/A)\mid X\mathrm{\ proper\ over\ }A\}$ is finite.
\end{Thm}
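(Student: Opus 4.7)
The plan is to reduce Theorem \ref{thm:TraceIdealsFinitelyMany} to a finiteness theorem for uniformly $F$-compatible ideals of $A$. By Lemma \ref{lem:TraceIdealFcomp}, every $\ft(X/A)\in\Sigma_A$ is a uniformly $F$-compatible ideal, so it will suffice to prove that, under each of the two hypothesis packages, $A$ has only finitely many uniformly $F$-compatible ideals. This is essentially the content of \cite[Theorem 4.3.1]{DTopen}, whose argument I would carry over with only cosmetic changes; the different class of morphisms ($X$ proper rather than $X$ finite surjective) affects only the definition of the trace ideal, and the $F$-compatibility input is identical.

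First I would dispose of the $F$-finite subcase of Theorem \ref{thm:open}: if $A$ is $F$-finite and $F$-pure Noetherian, the set of uniformly $F$-compatible ideals is classically known to be finite (by Schwede, and independently Enescu--Hochster), via the identification of such ideals with the finitely many ``centers of $F$-purity''. This handles one of the two source cases outright.

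For the remaining non-$F$-finite settings---namely, the G-ring subcase of Theorem \ref{thm:open} and the full setting of Theorem \ref{thm:openoverFpure}---my plan is to reduce to the $F$-finite case via the Hochster--Huneke $\Gamma$-construction, exactly as in \cite[\S 4]{DTopen} and \cite{MurGamma}. That construction provides a faithfully flat, regular map $A\to A^\Gamma$ with $A^\Gamma$ $F$-finite Noetherian, preserves the relevant $F$-purity hypotheses (absolute $F$-purity in the G-ring case; relative $F$-purity of $R\to A$ in the setting of Theorem \ref{thm:openoverFpure}, cf.\ \cite{MurGamma}), and carries uniformly $F$-compatible ideals to uniformly $F$-compatible ideals under extension. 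Since faithful flatness yields $\fa = \fa A^\Gamma \cap A$, extension injects the uniformly $F$-compatible ideals of $A$ into those of $A^\Gamma$, and the $F$-finite step above then completes the finiteness.

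The main obstacle I anticipate is verifying that the $\Gamma$-construction plays well with the \emph{relative} $F$-purity hypothesis of Theorem \ref{thm:openoverFpure} and with the extension of uniformly $F$-compatible ideals; both items are technical, but have already been treated in \cite[\S 4]{DTopen} and \cite{MurGamma}. Once they are in hand, the proof amounts to a clean combination of Lemma \ref{lem:TraceIdealFcomp}, the classical $F$-finite finiteness, and the $\Gamma$-reduction.
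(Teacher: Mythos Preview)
Your overall strategy—reduce via Lemma \ref{lem:TraceIdealFcomp} to a finiteness statement, handle the $F$-finite case by Schwede/Enescu--Hochster, then descend from an $F$-finite cover produced by a $\Gamma$-type construction—matches the paper's. But the \emph{mechanism} you use for the descent is different from the paper's, and the step you rely on has a gap.

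You assert that extension $\fa\mapsto\fa A^\Gamma$ carries uniformly $F$-compatible ideals of $A$ to uniformly $F$-compatible ideals of $A^\Gamma$. This is not obvious, and I do not believe it is what \cite{DTopen} or \cite{MurGamma} actually prove. The difficulty is that an $A^\Gamma$-linear map $\psi\colon F_*^e A^\Gamma\to A^\Gamma$ need not arise by base change from an $A$-linear map $F_*^e A\to A$: since $A$ is \emph{not} $F$-finite in the cases at hand, $F_*^e A$ is not finitely presented as an $A$-module, so $\Hom_A(F_*^e A,-)$ does not commute with the flat base change $-\otimes_A A^\Gamma$. Without control over the new maps $\psi$, there is no reason that $\psi(F_*^e(\fa A^\Gamma))\subseteq\fa A^\Gamma$. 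Your proposal would, if correct, prove the stronger statement that $A$ itself has only finitely many uniformly $F$-compatible ideals; that may well be true, but it needs an argument you have not supplied.

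The paper sidesteps this entirely by extending the \emph{trace ideals themselves} rather than arbitrary $F$-compatible ideals. For the faithfully flat map $A\to B$ to an $F$-finite $F$-pure ring (produced by \cite[Theorem 3.4]{MurGamma} in the G-ring case, or by base-changing along $R\to R'$ from \cite[Lemma~2.13]{Splinters} in the relative case), one has $\ft(X/A)\,B=\ft(X_B/B)$ for every \emph{proper} $X$, because $Rf_*\cO_X$ is pseudo-coherent and $R\Hom$ from a pseudo-coherent object commutes with flat base change (\S\S\ref{subsec:Derived}); the paper explicitly flags this as the place where properness is used. Hence $\fa\mapsto\fa B$ sends $\Sigma_A$ into $\Sigma_B$, injectively by faithful flatness, and $\Sigma_B$ is finite by the $F$-finite case. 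No claim about general $F$-compatible ideals under base change is needed.
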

\begin{proof}
If $A$ is $F$-finite, then $A$ is $F$-split since $A$ is Noetherian and $F$-pure.
Therefore the number of uniformly $F$-compatible ideals of $A$ is finite, see \cite[Proposition 3.4.1]{DTopen}, so we conclude by Lemma \ref{lem:TraceIdealFcomp} that $\Sigma_A$ is finite.
If $A$ is essentially of finite type over a Noetherian local G-ring, then since $A$ is $F$-pure, there exists a faithfully flat ring map $A\to B$ where $B$ is Noetherian, $F$-finite, and $F$-pure by \cite[Theorem 3.4]{MurGamma} (cf. the proof of \cite[Corollary 3.5]{MurGamma}) and $\Sigma_B$ is finite by what we just proved.
For $\ft(X/A)\in\Sigma_A$, we have $\ft(X/A)B=\ft(X_B/B)\in\Sigma_B$ (see \S\S\ref{subsec:Derived}; 
this is why we need properness), so the finiteness of $\Sigma_B$ implies that of $\Sigma_A$.

Finally, when $A$ is as in Theorem \ref{thm:openoverFpure},
we can find
a faithfully flat ring map $R\to R'$ where $R'$ is Noetherian, $F$-finite, and $F$-pure, see \cite[Lemma 2.13]{Splinters}.
By \cite[Proposition 2.4]{Has10} $R'\otimes_R A$ is $F$-pure,
and is Noetherian and $F$-finite
since it is essentially of finite type over $R'$.
%as in the proof of \cite[Theorem 2.14]{Splinters}, and
The rest of the argument above carries verbatim.
\end{proof}

\begin{Cor}\label{cor:normallocusoverFpure}
Let $A$ be as in Theorem \ref{thm:openoverFpure}.
Then the normal (resp. splinter) locus of $A$ is open.
\end{Cor}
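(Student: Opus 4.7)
The plan is to apply Theorem \ref{thm:TraceIdealsFinitelyMany} (finiteness of $\Sigma_A$) to realize both the non-splinter and the non-normal loci of $\Spec(A)$ as finite unions of closed sets of the form $V(\ft(X/A))$ for suitable proper $A$-schemes $X$; this is the same philosophy as in \cite[Theorem 1.0.1]{DTopen}.

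For the splinter locus, I would start from the characterization that $A_\fp$ fails to be a splinter iff there is some finite (equivalently, in characteristic $p$, proper) surjective $Y \to \Spec(A_\fp)$ with $\ft(Y/A_\fp) \subsetneq A_\fp$. Given such $Y$, Lemma \ref{lem:LocalizeProperFP} extends it to a proper $X \to \Spec(A)$ with $X \times_{\Spec(A)} \Spec(A_\fp) \cong Y$, and the flat base change property of trace ideals recorded in \S\S\ref{subsec:Derived} yields $\ft(X/A)_\fp = \ft(Y/A_\fp) \neq A_\fp$, so $\ft(X/A) \subseteq \fp$. The converse implication is immediate from the same flat base change identity. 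Thus the non-splinter locus equals $\bigcup_X V(\ft(X/A))$ as $X$ runs over proper $A$-schemes with $\ft(X/A) \neq A$, which is a finite union by Theorem \ref{thm:TraceIdealsFinitelyMany}, hence closed.

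For the normal locus, I would exploit that the normalization $\overline{A}$ is a filtered colimit $\operatorname{colim}_i A_i$ of its finite $A$-subalgebras $A_i$ in the total fraction ring $K$ of $A$. Each inclusion $A \hookrightarrow A_i$ becomes an isomorphism after $-\otimes_A K$ (both sides equal $K$, since $A_i \otimes_A K$ is obtained from $A_i \subseteq K$ by inverting nonzerodivisors of $A$), so $\Spec(A_i) \to \Spec(A)$ is an M-morphism and $\ft(A_i/A) \in \Sigma_A$. Extending any $A$-linear retraction $\pi\colon A_i \to A$ with $\pi(1)=1$ by $K$-linearity to a map $K = A_i \otimes_A K \to K \otimes_A K = K$, one sees $\pi$ must be the canonical inclusion $A_i \hookrightarrow A$, forcing $A_i = A$. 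Combining with the standard identity $\overline{A_\fp} = \overline{A}_\fp$, the ring $A_\fp$ is normal iff $(A_i)_\fp = A_\fp$ for all $i$, iff $\ft(A_i/A) \not\subseteq \fp$ for all $i$. So the non-normal locus equals $\bigcup_i V(\ft(A_i/A))$, again a finite union of closed sets by Theorem \ref{thm:TraceIdealsFinitelyMany}.

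The principal obstacle is the spreading-out step in the splinter case: one needs to lift proper surjective morphisms over $\Spec(A_\fp)$ to proper $A$-schemes with correctly-matching trace ideals, and to argue that obstructions at $\fp$ really do come from global proper $A$-schemes. The key inputs are Lemma \ref{lem:LocalizeProperFP} and the flat base change identity for trace ideals. The normal locus argument is easier in this respect, since the finite subalgebras $A_i \subseteq K$ are already defined globally on $\Spec(A)$ and no spreading-out is needed.
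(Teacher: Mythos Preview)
Your normal-locus argument is correct and follows the same philosophy as the paper. The paper first reduces to the case where $A$ is an integral domain and then works with finite subextensions of the fraction field; you work directly with the total fraction ring, which is fine since $A$ is $F$-pure hence reduced (by \cite[Proposition 2.4]{Has10}). One small slip: you write ``$\pi$ must be the canonical inclusion $A_i \hookrightarrow A$,'' but $A \subseteq A_i$, not the reverse; what your $K$-linearity argument actually shows is that $\pi$ agrees with the inclusion $A_i \hookrightarrow K$, forcing $A_i \subseteq A$.

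The splinter-locus argument has a genuine gap in the ``converse implication.'' You claim the non-splinter locus equals $\bigcup_X V(\ft(X/A))$ where $X$ ranges over \emph{all} proper $A$-schemes with $\ft(X/A)\neq A$. But take $X=\emptyset$: then $Rf_*\cO_X=0$, so $\ft(X/A)=0$ and $V(\ft(X/A))=\Spec(A)$, regardless of whether any $A_\fp$ is a splinter. More generally, whenever $X_\fp\to\Spec(A_\fp)$ fails to be surjective, the condition $\ft(X_\fp/A_\fp)\neq A_\fp$ says nothing about the splinter property of $A_\fp$. The fix is to restrict the union to proper \emph{surjective} $X$; then base change preserves surjectivity and the converse goes through via \cite{Bha12}. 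For the forward inclusion you must then ensure that the extension $X$ of $Y$ produced by Lemma \ref{lem:LocalizeProperFP} is itself surjective. This is not automatic, but becomes so once you reduce to $A$ an integral domain (as the paper does for the normal locus): if $X_\fp\to\Spec(A_\fp)$ is surjective then the closed image of $X\to\Spec(A)$ contains the generic point, hence is everything. The paper itself sidesteps all of this by citing \cite[Proposition 2.12]{Splinters} directly.
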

\begin{proof}
By \cite[Proposition 2.12]{Splinters},
openness of the splinter locus
follows from Theorem \ref{thm:TraceIdealsFinitelyMany}.
The proof of the openness of the normal locus follows the same lines, but let us do it below for the reader's convenience.

For every $\fp\in\Spec(A)$ such that $A_\fp$ is normal, since $A$ is Noetherian, there exists $f\in A,f\not\in\fp$ such that $A_f$ is an integral domain.
We may thus assume from the outset that $A$ is an integral domain.

For a Noetherian integral domain $C$ put $$\Sigma_\nu(C)=\{\ft(D/C)\mid D\textrm{\ is\ a\ finite\ extension\ of\ }C\textrm{\ in\ the\ fraction\ field\ of\ }C\}.$$
It is elementary to see that $\Sigma_\nu(C_P)=\{IC_P\mid I\in\Sigma_\nu(C)\}$ for all $P\in\Spec(C)$.
Also, if $C\subseteq D$ is an extension of integral domains of the same fraction field, then $C\to D$ splits as a map of $C$-modules if and only if $C=D$.
One sees that the normal locus of a Noetherian integral domain $C$ is always $\Spec(C)\setminus\bigcup_{\fa\in \Sigma_\nu(C)}V(\fa)$.

Now, by Theorem \ref{thm:TraceIdealsFinitelyMany}, $\Sigma_\nu(A)\subseteq\Sigma_A$ is finite.
This concludes the proof.
%For every
\end{proof}

\begin{Cor}\label{cor:normalizationofFpure}
Let $A$ be as in Theorem \ref{thm:openoverFpure}.
Then the normalization of $A$ is finite over $A$.
\end{Cor}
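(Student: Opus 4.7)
The plan is to produce a single nonzerodivisor $q\in A$ such that $q\widetilde A\subseteq A$, where $\widetilde A$ denotes the normalization of $A$; then $\widetilde A\subseteq q^{-1}A\cong A$ sits as a submodule of a cyclic $A$-module, hence is finitely generated by the noetherianness of $A$.

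Since $A$ is $F$-pure it is reduced, so its total fraction ring $K$ is a finite product of fields and $\widetilde A\subseteq K$ is the directed union of its finite $A$-subalgebras $B$ (each such $B$, being finitely generated over $A$ by integral elements, is a finite $A$-module). The crucial observation is that for every such $B$ the trace ideal $\ft(B/A)$ coincides with the conductor $\mathfrak c(B):=(A:_A B)=\{a\in A\mid aB\subseteq A\}$. Indeed, inverting the multiplicative set $S$ of nonzerodivisors of $A$ turns $A\subseteq B\subseteq K$ into $K=S^{-1}A\subseteq S^{-1}B\subseteq S^{-1}K=K$, so $B\otimes_A K=S^{-1}B=K$. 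Any $A$-linear $\varphi\colon B\to A$ therefore extends to a $K$-linear endomorphism of $K$, which is multiplication by $\varphi(1)\in A$; the image lies in $A$ precisely when $\varphi(1)B\subseteq A$, i.e., when $\varphi(1)\in\mathfrak c(B)$.

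By Theorem~\ref{thm:TraceIdealsFinitelyMany}, $\Sigma_A$ is finite, and hence so is $\{\mathfrak c(B)\}_B\subseteq\Sigma_A$. Since $B\mapsto\mathfrak c(B)$ reverses inclusion along the directed system of finite $A$-subalgebras of $\widetilde A$ and takes only finitely many values, there exists a finite $A$-subalgebra $B_0\subseteq\widetilde A$ such that $\mathfrak c(B)=\mathfrak c(B_0)$ for every finite $A$-subalgebra $B\subseteq\widetilde A$ containing $B_0$. As $B_0$ is a finitely generated $A$-submodule of $K$, writing its module generators over a common denominator from $S$ produces a nonzerodivisor $q\in A$ with $qB_0\subseteq A$, i.e., $q\in\mathfrak c(B_0)$.

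Finally, for any $x\in\widetilde A$ the subalgebra $B_0[x]\supseteq B_0$ is again a finite $A$-subalgebra of $\widetilde A$, so $q\in\mathfrak c(B_0)=\mathfrak c(B_0[x])$ gives $qx\in A$; hence $q\widetilde A\subseteq A$, as desired. The substantive step is the identification $\ft(B/A)=\mathfrak c(B)$; everything else is a pigeonhole argument on the finite set $\Sigma_A$ combined with the standard fact that a finite $A$-subalgebra of $K$ has a conductor containing a nonzerodivisor.
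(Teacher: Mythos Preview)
Your proof is correct and takes a genuinely different route from the paper's. Both rely on Theorem~\ref{thm:TraceIdealsFinitelyMany}, but the paper uses it indirectly: first to show the normal locus is open (Corollary~\ref{cor:normallocusoverFpure}), so that $A_f$ is normal for some nonzerodivisor $f$; then it separately observes that each completion $A_\fp^\wedge$ is $F$-pure and hence reduced, so normalization is locally finite by \citestacks{032Y}; finally it combines these two facts via the EGA criterion \cite[Proposition~6.13.6]{EGA4_2}. You instead identify $\ft(B/A)$ with the conductor $(A:_A B)$ for each finite $A$-subalgebra $B\subseteq K$ (correct, since $B\otimes_A K=K$ forces every $\varphi\in\Hom_A(B,A)$ to be multiplication by $\varphi(1)$), and then run a pigeonhole on the resulting finite set of conductors to produce a single nonzerodivisor $q$ with $q\widetilde A\subseteq A$. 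Your route is more elementary and self-contained---no EGA criterion, no completion argument, and openness of the normal locus is never invoked---while the paper's decoupling into ``generically normal'' plus ``locally analytically unramified'' makes contact with the classical theory of N-2 rings and yields Corollary~\ref{cor:normallocusoverFpure} as an independent result along the way. One small point: you assert that $A$ is $F$-pure, but the standing hypothesis only says that $R$ and the map $R\to A$ are $F$-pure; the conclusion for $A$ needs \cite[Proposition~2.4]{Has10}, as the paper notes.
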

\begin{proof}
By \cite[Proposition 2.4]{Has10} $A$ is $F$-pure, in particular reduced.
Thus by Corollary \ref{cor:normallocusoverFpure} $A_f$ is normal for some nonzerodivisor $f\in A$.
Moreover, for all $\fp\in \Spec(A)$,
the completion $A^\wedge_\fp$ is $F$-pure (see for example \cite[Lemma 3.26]{Has10}), thus reduced,
so the normalization of $A_\fp$ is finite \citestacks{032Y}.
Our result now follows from \cite[Proposition 6.13.6]{EGA4_2}, which is stated for domains but works for reduced rings as well.
%once we replace the reference \citestacks{0332} by.
\end{proof}

\begin{Rem}[cf. {\cite[Example 4.0.3]{DTetale}}]\label{rem:Openfail}
The previous corollaries (and thus Theorem \ref{thm:TraceIdealsFinitelyMany}) fail for a general $F$-pure Noetherian ring, and in fact,
fails even for a general $F$-pure Noetherian G-ring.

For a counterexample, we let $k$ be an algebraically closed field of characteristic $p$,
and let $(A_0,\fm_0)$ be a $k$-algebra essentially of finite type over $k$ that is a non-normal $F$-pure integral domain.
For instance we can take $A_0=k[x,y]_{(x,y)}/(xy+x^3+y^3)$,
so $A_0^\wedge\cong k[[x,y]]/(xy)$.
Then by \cite{Hoc73},
%an infinite tensor power of $A_0$ over $k$ localized at a suitable multiplicative subset
there is a Noetherian integral domain $A$ which has local rings at maximal ideals of the form $A_0\otimes_k K$ where $K/k$ is a field extension (thus is $F$-pure and a G-ring),
and has non-open normal locus,
and thus the normalization of $A$ is not finite \cite[Proposition 6.13.2]{EGA4_2}.
Since a non-normal ring is neither a splinter nor a birational derived splinter, \cite{Hoc73} also shows
that such $A$ constructed has non-open splinter and birational derived splinter locus.
\end{Rem}

Now we prove our main theorem about openness of the birational derived splinter locus.
It is similar to the proof of Corollary \ref{cor:normallocusoverFpure} and \cite[Theorem 2.14]{Splinters},
except for extra care of normality due to the assumptions in Lemma \ref{lem:LocalizeM}. % holds

\begin{Thm}%[cf. {\cite[Theorem 4.3.1]{DTopen}}]
\label{thm:openMIRROR}
The followings hold.
\begin{enumerate}
    \item\label{FpureOpen} Let $A$ be as in Theorem \ref{thm:openoverFpure}.
    Then the locus of prime ideals $\fp$ of $A$ such that $A_\fp$ is a birational derived splinter is open.
    
    \item\label{Gopen} Let $A$ be as in Theorem \ref{thm:open}.
    Then the locus of prime ideals $\fp$ of $A$ such that $A_\fp$ is an $F$-pure birational derived splinter is open.
\end{enumerate}
% and  are true.
\end{Thm}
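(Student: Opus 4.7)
The plan is to mirror the proof of Corollary \ref{cor:normallocusoverFpure} (itself modeled on \cite[Theorem 2.14]{Splinters}) and identify the BDS locus with the complement of a finite union of closed sets $V(\ft(f))$, where $f$ ranges over M-morphisms $X \to \Spec(A)$. The finiteness will come from Theorem \ref{thm:TraceIdealsFinitelyMany}; matching M-morphisms over $A$ with those over $A_\fp$ will use Lemma \ref{lem:LocalizeM}, whose hypotheses force a preliminary reduction to the normal integral domain case. The argument thus proceeds in three reduction steps.

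First I will reduce to the case that $A$ itself is $F$-pure. For part (\ref{FpureOpen}) this is automatic because $R$ is $F$-pure and $R\to A$ is $F$-pure. For part (\ref{Gopen}) the $F$-pure locus of $A$ is open by Lemma \ref{lem:Aquasiexcellent}; since openness is Zariski-local and the hypotheses of Theorem \ref{thm:open} persist under localization, I may shrink $\Spec(A)$ to an affine open on which $A$ itself is $F$-pure. In either case $A$ now satisfies the hypotheses of Theorem \ref{thm:TraceIdealsFinitelyMany}, so $\Sigma_A := \{\ft(X/A) \mid X \text{ proper over } A\}$ is finite.

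Second I will reduce to the case that $A$ is a normal integral domain. Any $\fp$ in the locus under consideration has $A_\fp$ a BDS and hence normal by Lemma \ref{lem:characterizeNoetherianMDS}. The normal locus of $A$ is open under our hypotheses---by Corollary \ref{cor:normallocusoverFpure} for case (\ref{FpureOpen}), and by Lemma \ref{lem:Aquasiexcellent} for case (\ref{Gopen})---so after further shrinking, $A$ is a Noetherian normal ring, i.e., a finite product of normal domains. Checking openness on each clopen component of $\Spec(A)$, I may assume $A$ is itself a normal integral domain, in which case $A$ is reduced and integrally closed in its fraction field $K$ with $\dim K = 0$, so the hypotheses of Lemma \ref{lem:LocalizeM} are met.

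Finally I claim the BDS locus is exactly $\Spec(A) \setminus \bigcup_{f} V(\ft(f))$, with $f$ running over M-morphisms to $\Spec(A)$. For $\fp$ in the BDS locus and any such $f$, the base change $f_{A_\fp}$ is again an M-morphism by Lemma \ref{lem:Mbasechange}, and flat base change of the trace ideal (\S\S\ref{subsec:Derived}) gives $\ft(f)A_\fp = \ft(f_{A_\fp}) = A_\fp$, so $\ft(f) \not\subseteq \fp$. Conversely, if $\ft(f) \not\subseteq \fp$ for all such $f$, then by Lemma \ref{lem:LocalizeM} every M-morphism $g : Y \to \Spec(A_\fp)$ extends to an M-morphism $f : X \to \Spec(A)$, and the same flat base change yields $\ft(g) = \ft(f)A_\fp = A_\fp$, showing $A_\fp$ is an MDS, equivalently a BDS by Lemma \ref{lem:characterizeNoetherianMDS}. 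Since $\Sigma_A$ is finite, the displayed union is a finite union of closed sets, so the BDS locus is open. The main obstacle is guaranteeing the hypotheses of Lemma \ref{lem:LocalizeM}: without the reduction to the normal domain case, M-morphisms at $\Spec(A_\fp)$ need not extend to $\Spec(A)$, and the correspondence underlying the whole argument would break.
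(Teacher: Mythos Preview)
Your proposal is correct and follows essentially the same route as the paper's proof: reduce to $A$ $F$-pure (only needed for case (\ref{Gopen})), then to $A$ normal so that Lemma \ref{lem:LocalizeM} applies, then identify the BDS locus with $\bigcap_f (\Spec(A)\setminus V(\ft(f)))$ over M-morphisms $f$, which is open by the finiteness in Theorem \ref{thm:TraceIdealsFinitelyMany}. Your extra reduction to an integral domain is harmless but unnecessary, since a Noetherian normal ring already has total fraction ring of dimension $0$, so Lemma \ref{lem:LocalizeM} applies directly.
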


\begin{proof}
In case (\ref{Gopen}), we may assume $A$ $F$-pure by Lemma \ref{lem:Aquasiexcellent}.
Thus in both cases we need to show the locus
\[
\{\fp\in\Spec(A)\mid A_\fp\textrm{\ is\ a\ birational\ derived\ splinter}\}
\]
is open. %, since being a birational derived splinter is the same as being an MDS for a Noetherian ring, see Lemma \ref{lem:characterizeNoetherianMDS}.
A Noetherian birational derived splinter is normal by Lemma \ref{lem:characterizeNoetherianMDS}, so we may assume $A$ normal by either Lemma \ref{lem:Aquasiexcellent} or Corollary \ref{cor:normallocusoverFpure}.

We have shown that $\Sigma_A$  is finite (Theorem \ref{thm:TraceIdealsFinitelyMany}), in particular,\[\Sigma':=\{\ft(f)\mid f\textrm{\ is\ an\ M-morphism\ to\ }\Spec(A)\}\] is finite.
For every $\fp\in\Spec(A)$ and every M-morphism $g:Y\to\Spec(A_\fp)$, $g$ extends to an M-morphism $f:X\to\Spec(A)$, see Lemma \ref{lem:LocalizeM}, and $\ft(f)A_\fp=\ft(g)$, see \S\S\ref{subsec:Derived}.
On the other hand, a localization of an M-morphism is an M-morphism (Lemma \ref{lem:Mbasechange}; note that our ring $A$ is reduced).
We conclude that the MDS locus is
\begin{align*}
     &\{\fp\in\Spec(A)\mid \fa A_\fp=A_{\fp},\forall \fa\in \Sigma'\}\\
    =&\bigcap_{\fa\in \Sigma'}\left(\Spec(A)\setminus V(\fa)\right)
\end{align*}
which is open since $\Sigma'$ is finite.
\end{proof}
%We deduce Theorem \ref{thm:SmCharp} as a corollary with an argument similar to the proof of .

\begin{Thm}
\label{thm:SmCharpMIRROR}
Let $S\to A$ be a regular homomorphism of Noetherian $\bF_p$-algebras.
If $S$ is an $F$-pure birational derived splinter, so is $A$.
\end{Thm}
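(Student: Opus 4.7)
The plan is to imitate the proof of \cite[Theorem 1.1]{Splinters}, replacing the splinter ingredients of that paper with the birational-derived-splinter analogues developed here (just as Theorem \ref{thm:ScalarFoverRMIRROR} was adapted from \cite[Theorem 1.4]{Splinters}). The first step is to apply Corollary \ref{cor:SmoothImpliesReg} with $S_0:=S$: since $S\to A$ is regular, it suffices to show every smooth $S$-algebra is a birational derived splinter.

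So fix a smooth $S$-algebra $B$. Because the BDS property is local on $\Spec B$ (Corollary \ref{cor:MDSisLocal}), I verify each $B_\fq$ is a BDS. By the local structure of smooth morphisms, after a Zariski shrinking around $\fq$ we may factor $S\to B$ as $S\to S[x_1,\dots,x_n]\to B$ with the second map \'etale. \'Etale ascent (Theorem \ref{thm:EtaleDominateM}) combined with the stability of the BDS property under localization (Lemma \ref{lem:LocalizeMDS}, Corollary \ref{cor:MDSisLocal}) reduces this to showing $S[x_1,\dots,x_n]$ is a BDS, and an induction on $n$ further reduces to the case $S[x]$.

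For $S[x]$, I localize at each prime $\fP\in\Spec S[x]$ and set $\fp:=\fP\cap S$. If $\fP=\fp S[x]$, then $S_\fp\to S[x]_\fP$ is a regular residue extension with separable (transcendental) residue extension $k(\fp)\hookrightarrow k(\fp)(x)$, and Theorem \ref{thm:ScalarFoverRMIRROR} applies directly. In the remaining case $\fP=(\fp,g)$ with $g\in S[x]$ monic of irreducible image in $k(\fp)[x]$, the local map is no longer a residue extension, which is the main obstacle. To handle it I would run the ultraproduct strategy from the proof of Theorem \ref{thm:ScalarFoverRMIRROR}: first reduce to $S$ being an integral domain (a normal Noetherian ring is a finite product of normal domains, and the BDS property passes to and from the factors by Lemmas \ref{lem:ProdMDS} and \ref{lem:LocalizeMDS}); then pass to a sufficiently saturated ultrapower $S_\fp^\natural$ of $S_\fp$ whose residue field contains a root of $\bar g$, so that an extension $\fQ\subset S_\fp^\natural[x]$ of $\fP$ becomes tractable via the residue extension case already handled; invoke Lemmas \ref{lem:MDSUltraprod} and \ref{lem:DomainUltraprod} to know $S_\fp^\natural$ is a BDS integral domain; and descend by Proposition \ref{prop:PureDescendMDS}.

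As in the proof of Theorem \ref{thm:ScalarFoverRMIRROR}, the delicate point will be arranging this final pure descent so that the relevant pure map sends nonzerodivisors to nonzerodivisors --- the hypothesis Proposition \ref{prop:PureDescendMDS} requires --- and the $F$-purity of $S$ enters precisely here, especially when controlling possible inseparability of $g$. This closed-fiber, non-residue-extension subcase is the step I expect to be the main technical obstacle.
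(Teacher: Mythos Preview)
Your initial reductions---Popescu via Corollary \ref{cor:SmoothImpliesReg}, locality via Corollary \ref{cor:MDSisLocal}, \'etale factorization via Theorem \ref{thm:EtaleDominateM}, induction to $S[x]$, and the generic-fibre case $\fP=\fp S[x]$ via Theorem \ref{thm:ScalarFoverRMIRROR}---all match the paper's route. The divergence, and the gap, is in the closed-fibre case $\fP=(\fp,g)$.

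Your ultrapower mechanism does not close that case. Even when the residue field of $S_\fp^\natural$ contains a root $\bar a$ of $\bar g$, for $\fQ=(\fm^\natural,x-a)$ the map $S_\fp^\natural\to (S_\fp^\natural[x])_\fQ$ is \emph{not} a residue extension (its maximal ideal is $(\fm^\natural,x-a)$, not the extension of $\fm^\natural$), and more fundamentally $S_\fp^\natural$ is not Noetherian, so Theorem \ref{thm:ScalarFoverRMIRROR} does not apply to it. Lemma \ref{lem:MDSUltraprod} only tells you that $S_\fp^\natural$ itself is an MDS; it says nothing about $(S_\fp^\natural[x])_\fQ$. You are right that $F$-purity must enter precisely here, but the mechanism you sketch does not use it. The paper instead invokes an ingredient you have omitted entirely: the \emph{openness} of the birational-derived-splinter locus of $S[x]$, Theorem \ref{thm:openMIRROR}(\ref{FpureOpen}). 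That result consumes the $F$-purity hypothesis (through the finiteness of uniformly $F$-compatible ideals, Theorem \ref{thm:TraceIdealsFinitelyMany}) and, in the transcription of \cite[Theorem 1.1]{Splinters}, replaces a prime-by-prime attack on the closed-fibre points. With openness in hand, the only use of Theorem \ref{thm:ScalarFoverRMIRROR} that is needed is at $\fm S[Y]$, as the Remark following the theorem confirms.
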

\begin{proof}
Note that $F$-purity localizes and ascends along regular maps, see \cite[Proposition 2.4]{Has10}, so in the (mostly implicit) d\'evissage process below $S$ is always $F$-pure and so is $A$.
%Details omitted.
%We also note that for Noetherian rings, birational derived splinter and MDS are the same, see Lemma \ref{lem:characterizeNoetherianMDS}.

We only need to consider the special case where $A$ a smooth $S$-algebra, see Corollary \ref{cor:SmoothImpliesReg}.
Now, the proof of {\cite[Theorem 1.1]{Splinters}} applies here, once we replace the ingredients
Theorem 2.9, Corollary 2.6, Lemma 2.2, Theorem 2.14, and Theorem 1.3 there
by our Corollary \ref{cor:IndEtMDS}, Corollary \ref{cor:MDSisLocal}, Proposition \ref{prop:PureDescendMDS}, Theorem \ref{thm:openMIRROR}(\ref{FpureOpen}), and Theorem \ref{thm:ScalarFoverRMIRROR} respectively.
\end{proof}

\begin{Rem}
In fact, we only used Theorem \ref{thm:ScalarFoverRMIRROR}
in the case
$S'=S[Y]_{\fm S[Y]}$.
In this case, the map $S\to S'$ actually satisfies the statement of Theorem \ref{thm:EtaleDominateM}.
This is because an M-morphism $Z\to \Spec(S')$ is dominated by a blowup of an ideal $J\subseteq S'$; and if $J=IS'$ for an ideal $I\subseteq S[Y]$,
then the base change of the blowup of $\Spec(S)$ of the ideal generated by the coefficients of elements of $I$ to $S'$ dominates $Z$.
This construction appears in \cite{LT81}
immediately before its Theorem 2.1; details omitted.
\end{Rem}

\section{Questions}\label{sec:Q}

%We ask if we can remove $F$-purity, and even remove the positive characteristic $p$ Theorem \ref{thm:open} is true.
\begin{Ques}
Let $A$ be essentially of finite type over a local G-ring.
Is the birational derived splinter locus of $A$ always open?
What about a general quasi-excellent ring, or an excellent ring?
\end{Ques}
In equal characteristic zero,
the birational derived splinter locus of a quasi-excellent ring is always open, as easily follows from Proposition \ref{prop:charactrizeQBDS}.
%this holds by the existence of resolutions.
%Similarly, in positive or mixed characteristic,
%if resolutions exist,
%it is not clear to the author if that will imply openness.
%Grauert-Riemanschneider,

%The following two questions are closely related.
\begin{Ques}
Let $S\to A$ be a regular homomorphism of Noetherian rings.
If $S$ is a birational derived splinter, is $A$ necessarily a birational derived splinter?
\end{Ques}
In equal characteristic zero,
if both $S$ and $A$ are quasi-excellent,
this is an easy consequence of Proposition \ref{prop:charactrizeQBDS} again.

We can ask if a strengthening holds,
see Question \ref{Ques:FlatAscent} below.

\begin{Ques}\label{Ques:CartierLift}
Let $(A,\fm)$ be a Noetherian local ring, $t\in \fm$ a nonzerodivisor.
If $A/tA$ is a birational derived splinter, is $A$ necessarily a birational derived splinter?
\end{Ques}

\begin{Ques}\label{Ques:FlatAscent}
Let $S\to A$ be a flat homomorphism of Noetherian rings.
If $S$ is a birational derived splinter, and the fibers of $S\to A$ are geometrically birational derived splinters, is $A$ necessarily a birational derived splinter?
\end{Ques}

These two questions are closely related.
If we replace ``birational derived splinter'' by ``splinter'' or ``derived splinter,''
then negative answers are given
in \cite{Singh},
since $F$-regularity implies splinter (see \cite[Theorem 3.5]{MP} and \cite{LS99}) and thus derived splinter by \cite{Bha12}.
However, the statements for birational derived splinter may be true,
and they hold for related singularity types: rational singularity in chacteristic zero \cite{Elk78},
$F$-rationality \cite[Theorems 5.1 and 7.12]{MP},
and BCM-rationality (\cite[Proposition 3.4]{MS18}, for Question \ref{Ques:CartierLift}).

We shall mention that if both Questions \ref{Ques:CartierLift} and \ref{Ques:FlatAscent} have affirmative answers,
then we will have further results about fibers of a map of Noetherian rings geometrically being birational derived splinters.
See \cite{Mur22}.

\end{document}